\newcommand\on{\operatorname}
\newcommand\oo{{\infty}}
\newcommand\cal{\mathcal}
\newcommand\R{\mathbb{R}}
\newcommand\T{\mathsf{T}}
\renewcommand\d{\mathrm{d}}
\newcommand \al{\alpha}
\newcommand\ta{\tau}
\newcommand\Om{\Omega}
\newcommand\M{\mathcal{M}}
\newcommand\pa{\partial}
\newcommand\SL{\on{SL}}
\newcommand\db{\on{DB}}
\renewcommand\sl{\mathfrak {sl}}
\newcommand\g{\mathfrak {g}}
\renewcommand\Im{\on{Im}}
\newcommand\Ker{\on{Ker}}
\newcommand\ind{\textnormal{ind}}
\newcommand\srp{\Pi^{\sharp}}
\newcommand\srm{\M^{\sharp}}
\numberwithin{equation}{section}
\newtheorem{thm}{\bf Theorem}[section]
\newtheorem{lem}[thm]{\bf Lemma}
\newtheorem{prop}[thm]{\bf Proposition}
\newtheorem{cor}[thm]{\bf Corollary}
\newtheorem{defn}{\bf Definition}[section]
\theoremstyle{remark}
\newtheorem{rem}{\bf Remark}[section]
\newtheorem{obs}{\bf Observation}[section]
\newtheorem{exmp}{\bf Example}[section]
\newtheorem{com}[thm]{\bf Comment}
\definecolor{bgd}{RGB}{153,0,51}      
\newtheorem{Conjecture}{\bf Conjecture}[section]
\keywords{Poisson manifold, Riemannian manifold, double bracket vector field, Lie Poisson structures, Poisson-Lie groups }
\subjclass[2020]{53D17, 58D17,	17B20}
\begin{document}
	
	\title[Metric Degeneracies and Gradient Flows on symplectic leaves]{Metric Degeneracies and Gradient Flows \\on symplectic leaves}
	
	\author[Z. Ravanpak]{Zohreh Ravanpak}\address{Z.\ Ravanpak: 	Department of Mathematics, West University of  Timi\c soara \\} 
	\email{zohreh.ravanpak@e-uvt.ro}
	\author[C. Vizman]{Cornelia Vizman}
	\address{C.\ Vizman:
		Department of Mathematics, West University of  Timi\c soara \\} \email{cornelia.vizman@e-uvt.ro}

	\begin{abstract}
		For a Poisson manifold endowed with a pseudo-Riemannian metric, we investigate degeneracies arising when the metric is restricted to symplectic leaves. Central to this work is the generalized double bracket (GDB) vector field—a geometric construct introduced in our earlier work—which generalizes gradient dynamics to indefinite metric settings. We identify admissible regions where the so-called double bracket metric remains non-degenerate on symplectic leaves, enabling the GDB vector field to function as a gradient flow on the admissible regions with respect to this metric. We illustrate these concepts with a variety of examples and carefully discuss the complications that arise when the pseudo-Riemannian metric fails to induce a non-degenerate metric on certain regions of the symplectic leaves.	
	\end{abstract}
	
	\maketitle

	\tableofcontents
	
	\section{Introduction}
	
	\medskip 
	Pseudo-Riemannian (or semi-Riemannian) manifolds generalize the Riemannian geometry by permitting indefinite metrics, characterized by a signature $(p,q)$ that specifies the numbers of positive and negative eigenvalues of the metric tensor. This flexibility enables applications ranging from spacetime modeling in general relativity—where Lorentzian metrics $(1,n-1)$ classify tangent vectors into timelike, spacelike, or null categories—to optimization on manifolds with non-positive definite geometries \cite{GaoLimYe2018}. However, indefinite signatures introduce challenges, such as metric degeneracies on submanifolds, which complicate geometric analysis and optimization. For comprehensive treatments of Riemannian geometry and optimization on smooth manifolds, see also \cite{Jost2017, Boumal2023}.

	The application of pseudo-Riemannian geometry to optimal transport problems establishes a profound connection between this advanced geometric framework and optimization challenges in information geometry and transport theory \cite{TK}. In a related vein, O'Neill \cite{O'Neill} explores the application of semi-Riemannian geometry to relativity theory, further demonstrating the versatility of these geometric structures in theoretical physics.
	
	A gradient vector field in a pseudo-Riemannian manifold is defined with respect to a smooth function $f$ and the metric tensor, producing the vector field $-\nabla f$ that points the direction of steepest descent. Such vector fields are fundamental in optimization on pseudo-Riemannian manifolds, enabling adaptations of algorithms like gradient descent for settings with indefinite metrics. In Lorentzian manifolds, for instance, the gradients of time functions\footnote{A function whose gradient is everywhere timelike.} help to define causal structures, distinguishing between timelike, spacelike, and null directions—an essential aspect of general relativity \cite{fathi,WK}.

	Unlike in Riemannian manifolds, where every smooth function has a uniquely defined gradient vector field with positive-definite metric properties, in pseudo-Riemannian manifolds the indefinite metric allows gradients to be timelike, spacelike, or null, and in some cases, the gradient may fail to be timelike everywhere or may not provide the same geometric control. In Poisson geometry, a related concept arises: for any smooth function $f$, the associated Hamiltonian vector field $X_f = \Pi^\sharp(df)$ (with $\Pi^\sharp$ induced by the Poisson bivector) provides a natural analogue of the gradient, aligning with the geometric structure of the manifold.
	
	In our previous work \cite{BRV}, we explored the framework of Poisson manifolds equipped with a Riemannian metric and introduced a vector field that we termed the generalized double bracket (GDB) vector field. This vector field is a gradient on symplectic leaves with respect to an induced metric known as the double bracket (DB) metric. Specifically, we generalized the setting from semi-simple compact Lie algebras $\mathfrak{g}$— the framework of double bracket vector fields \cite{brockett-1, brockett-2, bloch}— to Riemannian manifolds equipped with Poisson structures. Instead of restricting our attention to a Lie algebra $\mathfrak{g}$ equipped with negative-definite Killing form, we considered general Poisson manifolds that are equipped with a Riemannian metric.

	In this paper, we discuss the case where the metric is pseudo-Riemannian of indefinite signature. We explore GDB vector fields within the framework of pseudo-Riemannian manifolds, where complications arise due to the indefinite signature of the metric. An important question is whether the restriction of the metric \( g \) to the symplectic leaves of the Poisson structure is non-degenerate. For the special case of a noncompact semi-simple Lie algebra—where, in our approach, the Lie algebras do not need to be compact and the Killing metric is indefinite—this non-degeneracy occurs only on certain exceptional leaves. However, in general, the situation is more complex and this question will occupy the main part of this paper. We will particularly examine the case of the Lie algebra \( \mathfrak{sl}_2 \) and a large class of Poisson structures \( \Pi \) on \( \mathbb{R}^3 \), generalizing it non-linearly while maintaining the simple flat pseudo-metric \( g \) of \( \mathfrak{sl}_2 \) in the ambient space \( \mathbb{R}^3 \). This exploration will lead to an interesting and sophisticated interplay between \( \Pi \) and \( g \). 
	
	Our generalization necessitates a restriction to what we refer to as ``good symplectic leaves", specifically those for which the induced metric is non-degenerate. The concept of ``good leaves" is crucial when discussing GDB vector fields on symplectic leaves. These leaves satisfy certain regularity conditions, enabling the definition of a gradient vector field that behaves well with respect to the induced metric from the ambient pseudo-Riemannian structure.  For a Riemannian metric $g$ on $M$ , all symplectic leaves are good leaves. For a Poisson manifold $M$ with a metric $g$ of indefinite signature we show that this generalization is applicable only in regions where the metric induced on the symplectic leaves is non-degenerate. Metric degeneracy may occur not across the entire leaf but in localized regions of the leaf. In such scenarios, we work with lightlike leaves. We refer to those regions as ``green zones”, while leaves for which the induced metric is non-degenerate everywhere are ``good leaves". We will provide a characterization of such regions without explicitly determining the induced metric, which can be quite complex. This discussion will be illustrated through a wide class of Poisson structures on \( \mathbb{R}^3 \).
	
	Furthermore, in our setting a compatibility condition—generalizing the ad-invariance found in the Lie algebra case—between the metric and the Poisson structure on the manifold 
	$M$ is not necessary. Our formalism remains valid even without such an assumption, allowing for full generality. It's worth noting that the unimodularity requirement imposes a significant constraint on which Poisson manifolds can admit compatible Riemannian metrics \cite{Boucetta}. Unimodularity is a specific property of Poisson manifolds, and not all Poisson manifolds possess this characteristic. This observation underscores the broader applicability of our approach, which does not rely on such restrictive conditions.
	
	\smallskip
	\noindent {\bf Structure of the paper:} In Section \ref{sec:2}, we review the construction of GDB vector fields for an arbitrary Poisson manifold $M$ equipped with a Riemannian metric. Specifically: These vector fields possess two key properties: They are tangent to the symplectic leaves of $(M,\Pi)$; They are analogous to the double bracket vector fields in the linear case of a semi-simple Lie algebra $\g$. In Section \ref{sec:3}, we focus on the case where the metric $g$ has an indefinite signature. We introduce the concept of ``green zones" within symplectic leaves and prove a key theorem: the restriction of the GDB vector field to these green zones is the gradient of a smooth function with respect to the DB metric. In Section \ref{sec:4}, we introduce a broad class of Poisson structures on $\R^3$. This class encompasses notable examples such as the Lie-Poisson group $\sl_2^*$ and a related Poisson-Lie group as special cases.
	Section \ref{sec:5} is dedicated to an in-depth analysis of the symplectic leaves associated with the Poisson structures introduced in the previous section. Our investigation reveals that the leaf structure in these cases is significantly more complex than in the $\sl_2$ example. Notably, we propose the following conjecture: through careful selection of $\Pi$, it is possible to construct symplectic leaves of arbitrary genus. This result highlights the rich topological diversity present in these Poisson structures. In Section \ref{sec:6}, we introduce a pseudo-metric $g$ on the Poisson manifolds discussed in the preceding section. This enables us to investigate the challenges that arise when $g$ is pulled back to the symplectic leaves, with particular attention to points where non-degeneracy fails (the red lines $\mathcal{R}$). In Section \ref{sec:7}, we combine the information about symplectic leaves gathered in Section \ref{sec:4} with the analysis of the red zone. The intersection of a leaf $S$ with $\mathcal{R}$ yields what we refer to as ``red lines," while the remainder of the leaf, $S \setminus (\mathcal{R} \cap S)$, constitutes the previously mentioned green zones. Leaves that do not intersect $\mathcal{R}$ at all are called good leaves. It is precisely on these good leaves, or more generally within the green zones, that our main theorem applies. In the Sec.\ \ref{sec:8}, we finally compute the structures induced on the leaves for the class of our examples and illustrate the main theorem by means of them.

	\section {Metric degeneracies and gradient flows: definite signature}\label{sec:2}
	Let $(M,\Pi,g)$ be a Poisson manifold equipped with a Riemannian structure. In this section, we will review the basic definitions and gradient-like behavior of the GDB vector field on symplectic leaves, as presented in \cite{BRV}.
	
	\smallskip
	In the context of Poisson manifolds, for a smooth function \( f \) on symplectic leaves, the associated Hamiltonian vector field or symplectic gradient $X_f=\Pi^\sharp(\d f)$  satisfies
	$
	\omega(X_f,\cdot) = -\d f$, where $\Pi^{\sharp}$ is the Poisson tensor viewed as a map from covectors to vectors.
	In the context of Riemannian manifolds, the Riemannian gradient $\nabla f$ of a smooth function $f$ satisfies $g(\nabla f, \cdot) =\d f$\,.  
	
	Given a manifold  equipped with both Poisson and Reimannian structures, we have shown that the vector field $\pa _G:={(\Pi^{\sharp} \circ g^{\flat}) (X_G)} = i_{g^{\flat}(X_G)}\Pi$, where $g^{\flat}$ maps vectors to covectors using the metric tensor $g$, is a gradient vector field on symplectic leaves with respect to a so-called double bracket (DB) metric. We have termed this vector field the generalized double bracket (GDB) vector field. In the following, we briefly recall this construction. For more details, see: \cite{BRV}. 
	
	To better understand the geometry of GDB vector field, we introduced a symmetric contravariant 2-tensor field called the \emph{metriplectic tensor field} as: 
	\begin{equation}\label{metricplectic}
		\M(\alpha,\beta):={g}(\Pi^{\sharp}\al,\srp\beta)\,,\quad\alpha,\beta\in\Om^1(M)\,.
	\end{equation}
	Then the \emph{GDB vector field} can be defined using this metriplectic tensor field. For a smooth function $G$ on $M$, the GDB vector field denoted by $\partial_G$ is given by:\begin{equation}\label{defi}\partial_G:=-\M^{\sharp}(\d G)\,.\end{equation}
	This construction combines elements of Riemannian geometry, Poisson geometry, using metriplectic tensor field to create a new type of vector field. The GDB vector field encapsulates information from both the metric structure (through $g$) and the Poisson structure (through $\Pi$) of the manifold.
	
	Next, we need to define an appropriate metric on symplectic leaves. Let $S$ denote a symplectic leaf of $(M,\Pi,g)$. Since the metric on the ambient space is positive definite, the metriplectic tensor $\M$ is non-degenerate and so the 2-tensor field induced by $g$ on $S$, denoted as $g_{\ind}^S := \iota^* g$, is non-degenerate at each point $s\in S$. Here, $\iota \colon S \hookrightarrow M$ is the inclusion map. The desired \emph{double bracket (DB) metric} ${\tau}_{\db}^S$ interacts with the corresponding symplectic form $\omega$  on the symplectic leaf $S$ of $(M,\Pi,g)$ in the following way:
	\begin{equation}
		{\tau}_{\db}^{S}\left(X,Y\right):=({g}_{\ind}^{S})^{-1}\left({ i}_{X}\omega^S,{i}_{Y}\omega^S\right),\quad X,Y\in\mathfrak{X}(S)\,. \label{DB}
	\end{equation}
	\noindent With the above formulation, we have the following Theorem:
	\begin{thm}[\cite{BRV}]\label{gradient th}
		Let $(M,\Pi,g)$ be a smooth Poisson manifold equipped with a Reimannian structure. For any smooth function $G\in C^\oo(M)$,
		the GDB vector field $\pa_ G$, is a gradient vector field of $G|_{S}$ with respect to the DB metric:
		\begin{equation} \label{thmeq}
			(\pa_G)(x)=-\nabla_{\ta^S_{\db}} {(G|_{S})}(x),\quad x\in S\,. 
		\end{equation}
	\end{thm}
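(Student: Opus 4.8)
The plan is to verify directly that $\pa_G$ satisfies the defining property of a gradient with respect to the DB metric, namely that
\[
\tau^S_{\db}(\pa_G, Y) = -\,\d(G|_S)(Y) \qquad \text{for all } Y \in \mathfrak{X}(S).
\]
Since $g$ is positive definite, its restriction $g_{\ind}^S = \iota^* g$ is non-degenerate on each $T_xS$, and hence the bilinear form $\tau^S_{\db}$ defined in \eqref{DB} is non-degenerate as well; consequently the displayed identity determines $\pa_G$ uniquely as $-\nabla_{\tau^S_{\db}}(G|_S)$, which is exactly \eqref{thmeq}. Thus the entire proof reduces to evaluating the left-hand side.

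First I would record the basic contraction identity on a leaf: for any one-form $\gamma\in\Omega^1(M)$,
\[
i_{\Pi^\sharp \gamma}\,\omega^S = -\,\iota^*\gamma .
\]
This extends the Hamiltonian relation $i_{X_f}\omega^S = -\d f|_S$ recalled above (with $X_f = \Pi^\sharp \d f$): both sides are pointwise $\R$-linear in $\gamma$, the image of $\Pi^\sharp_x$ is $T_xS$, and the differentials $\d f$ span $T^*_xM$ at each point, so agreement for exact $\gamma$ forces agreement for all $\gamma$.

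Next I would apply this identity to the one-form $\gamma = g^\flat(X_G)$, which is precisely the form whose $\Pi^\sharp$-image defines $\pa_G = \Pi^\sharp(g^\flat(X_G))$. Because $X_G = \Pi^\sharp(\d G)$ is tangent to $S$, for $Y\in T_xS$ one has $(\iota^* g^\flat X_G)(Y) = g(X_G,Y) = g_{\ind}^S(X_G,Y)$, so $\iota^*(g^\flat X_G) = (g_{\ind}^S)^\flat(X_G)$ and the contraction identity yields $i_{\pa_G}\omega^S = -(g_{\ind}^S)^\flat(X_G)$. Substituting into \eqref{DB} and using the musical relation $(g_{\ind}^S)^{-1}\bigl((g_{\ind}^S)^\flat X_G,\eta\bigr) = \langle\eta, X_G\rangle$ collapses the computation to
\[
\tau^S_{\db}(\pa_G, Y) = (g_{\ind}^S)^{-1}\bigl(-(g_{\ind}^S)^\flat X_G,\, i_Y\omega^S\bigr) = -\langle i_Y\omega^S, X_G\rangle = \omega^S(X_G, Y) = (i_{X_G}\omega^S)(Y) = -\,\d(G|_S)(Y),
\]
where the last equality is once more the Hamiltonian relation $i_{X_G}\omega^S = -\d G|_S$. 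This is the desired identity.

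The argument is essentially formal once the contraction identity is in place, so the only genuine care lies in the bookkeeping: tracking the sign convention $i_{X_f}\omega^S = -\d f$ consistently, and justifying the two pointwise steps—that $\iota^*(g^\flat X_G) = (g_{\ind}^S)^\flat(X_G)$, which crucially uses tangency of $X_G$ to $S$, and that $g_{\ind}^S$ is genuinely non-degenerate so that $(g_{\ind}^S)^{-1}$, the DB metric, and the gradient are all well defined. The latter is the single place where positive-definiteness of $g$ enters, and it is exactly the hypothesis that will fail on the degeneracy loci treated in the later sections.
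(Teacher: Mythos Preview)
Your proof is correct. The paper does not actually prove this theorem here—it is quoted from the earlier work \cite{BRV}, and the proof of Theorem~\ref{gradient ps}(1) merely refers back to that source. The only place the present paper approaches a proof is Section~\ref{sec:8}, where the identity $\tau_{\db}(\partial_G|_S,\cdot) \approx -\,\d(G|_S)$ is verified by an explicit coordinate computation for the specific family of Poisson structures \eqref{Pi} on $\R^3$ with the flat metric \eqref{g}: one writes out $g_{\ind}^S$, $\omega^S$, $\tau_{\db}$, and $\partial_G|_S$ in the chart $(x,z)$ and checks the equality by hand.

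Your coordinate-free argument, built on the single contraction identity $i_{\Pi^\sharp\gamma}\,\omega^S = -\iota^*\gamma$ and the observation that $\iota^*(g^\flat X_G) = (g_{\ind}^S)^\flat X_G$ because $X_G$ is tangent to $S$, is both more general and considerably more transparent than that coordinate check. It also isolates cleanly the one place where positive-definiteness of $g$ is used—namely to guarantee that $(g_{\ind}^S)^{-1}$ exists so that $\tau^S_{\db}$ and the gradient are defined—which is exactly the step that must be localized to green zones in the indefinite-signature setting of Section~\ref{sec:3}.
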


In \cite{BRV}, we incorporated a GDB vector field into Hamiltonian dynamics, this approach transforms stable equilibria into asymptotically stable equilibria, while preserving the structure of the symplectic leaves. We applied this method to the example of two harmonic oscillators in $(n:m)$ resonance.
	
	\section{Metric degeneracies and gradient flows: indefinite signature}\label{sec:3}
	In this section, we expand the GDB framework to incorporate metrics with indefinite signatures. This extension represents a significant advancement in the GDB theory.

	Let $(M,\Pi,g)$ be an $n$-dimensional smooth Poisson manifold endowed with a pseudo-Riemannian metric. Our discussion will focus on the geometry of GDB vector field in indefinite signatures. This case requires more careful consideration, as submanifolds might not be pseudo-Riemannian with respect to the induced metric \cite{O'Neill}. The degeneracy of $g^S_{\ind}$ occurs when $\Ker \srp \subsetneq \Ker \srm $, this is equivalent with the strict inclusion $\Im \srm \subsetneq \Im \srp $, see \cite{BRV}. Therefore, the obstacle for having a well-defined metric on a symplectic leave $S$ can be restricted to specific points or regions. The following discussion addresses this issue.
	
	\begin{defn} \label{Msing}
		\rm
		A point $m\in M$ is called \textbf{$\M$-regular} if $\Im \Pi^{\sharp} \vert_m = \Im \sharp_\M\vert_m$;  otherwise $m$ is referred to as \textbf{$\M$-singular}. A sympectic leaf $ S$ then is called a \textbf{good leaf} if all its points are $\M$-regular. 
	\end{defn}
	\noindent   Unlike a Riemannian metric $g$ that all points in $M$ are  $\M$-regular and $\M$-distribution $\Im\M^{\sharp}$ is integrable, in the case of a pseudo-Riemannian metric $g$, the integrability of this $\M$-distribution is not guaranteed. 
	Metric degeneracy on leaves occurs at specific forbidden points or regions, specifically at $\mathcal{M}$-singular points and within forbidden zones, the latter can be characterized as follows:
	\begin{defn}
		The set of points where the metriplectic tensor becomes degenerate is referred to as the \textbf{red zone}. We denote this set by $\mathcal{R}$, defined as:
	\begin{equation}
		\mathcal{R} = \left\{ m \in M : \exists v \in \Im(\Pi^{\sharp}_m) \setminus \{0\} \,\,s.t.\,\, g(v, w)=0 \,,\forall w\in \Im(\Pi^{\sharp}_m) \right\}\,.
	\end{equation}
	\end{defn}
	\begin{rem}
		We note that $\Pi$-singular point-like leaves or higher-dimensional $\Pi$-singular leaves may not necessarily contain forbidden zones.
	\end{rem}
	\noindent By introducing and analyzing the red zone, we provide a more comprehensive picture of how the interplay between the Poisson structure and the pseudo-metric affects the geometry and analysis on these manifolds. This approach allows for understanding of where our methods are applicable.
	Consequently, the admissible parts are either good leaves or $\mathcal{M}$-regular parts of symplectic leaves, the latter can be characterized as follows:
	\begin{defn}
		The region $ \mathcal S=S\backslash (\mathcal R \cap S)$ of a symplectic leaf $S$, obtained by subtracting the intersection of red zone with the symplectic leave, is called the \textbf{green zone} of $S$.
	\end{defn}
	\begin{rem}
		A good leave is then as a leaf for which $\mathcal R \cap S=\emptyset$. 
	\end{rem}
	\begin{rem}
		It can be observed that the two-tensor $g_{\ind}^S := \iota^* g$ induced by $g$ on $S\subset M$ is degenerate at $s\in S$ if and only if $s$ is $\M$-singular. Consequently, for a good leaf the induced metric $g^S_{\ind}$ is non-degenerate.
	\end{rem}
	To investigate the interrelation between the Poisson structure $\Pi$ and the metric $g$ on $M$, we restrict $g$ to the symplectic leaves determined by $\Pi$, examining the degeneracy of the induced metric on each leaf. The classification of these degeneracies yields information about geometric interaction between $\Pi$ and $g$.
	
	\smallskip
	Let us assume that the two-tensor $(g_{\ind}^S)_x$ is degenerate on $\T_xS$ of a symplectic leaf $S$.  There exists a non-zero vector $\xi \in \T_x S$, such that $(g_{\ind}^S)_x(\xi, v)=0, \quad \forall v \in \T_x S$. The \textbf{radical or null space} of $\T_x S$ \cite{DB}, with respect to $g_{\ind}^S$, is a subspace $\operatorname{Rad} \T _x S$ of $\T _x S$ defined by 
	\begin{equation}\label{nullD}
		\operatorname {Rad}\T_x S=\left\{\xi \in \T_x S ; (g_{\ind}^S)_x(\xi, v)=0, \forall v \in \T_x S\right\}\,.
	\end{equation}
	\begin{defn} (\cite{DuBe})
		We say a symplectice leaf $S$ of a Poisson Manifold $(M, \Pi)$ is a \textbf{lightlike leaf} if the mapping 
	\begin{equation}
		\begin{array}{rcl}
			\Delta: S & \to& \operatorname {Rad} \T S\\
			x &\mapsto& \Delta_x:=\operatorname {Rad}\T_x S\,,
			
		\end{array}
		\end{equation}
		defines a nonzero differentiable distribution on $S$ . It is called the lightlike distribution on $S$. The degree of nullity $r$ of $S$ is defined as the dimension of the fibers of $\Delta$, i.e., the dimension of $\operatorname{Rad} \T_x S$.
	\end{defn}   
	
		\begin{rem}
		A lightlike leaf is then as a leaf for which $\mathcal R \cap S\neq \emptyset$. Therefore, it is necessarily contained within both the green and red zones.
	\end{rem}	
	Therefore, a lightlike leaf requires special consideration. Since $\mathcal{M}$ is non-degenerate in the green zones, it follows that $g_{\mathrm{ind}}^S$ is non-degenerate on $\T\mathcal{S}$. In this case, we denote the induced metric by $g_{\mathrm{ind}}^{\mathcal{S}}$. 
	
	Based on the preceding discussion, we conclude that a symplectic leaf $(S,\omega)$ of a Poisson manifold $(M,\Pi,g)$ with a pseudo-Riemannian metric can be characterized as either a good leaf, a bad leaf, or a lightlike leaf,  depending on the nullity of $S$. The lemma below is a constrained result of these observations. 
	\begin{lem}
Let $(M,\Pi,g)$ be a Poisson manifold with a pseudo-Riemannian metric. The degeneracies of the induced metric on symplectic leaves can be encoded into three distinct classes as follows:
		\begin{enumerate} 
			\item If $r=0$, $S$ is a good leaf and DB metric is defined as in (\ref{DB}).
			\item If $r=\operatorname {dim}(S)$, $S$ is a bad leaf, a well-defined metric on $S$ does not exist, as it is degenerate across the entirety of $\T S$.
			\item If $r<\operatorname {dim}(S)$, $S$ is a lightlike leaf, the DB metric is well-defined on the green zones of $S$ as 
		\end{enumerate}
		\begin{equation}
			\tau_{\text{DB}}^{\mathcal{S}}(X,Y) := (g_{\text{ind}}^{\mathcal{S}})^{-1}(i_X \omega^{\mathcal{S}}, i_Y \omega^{\mathcal{S}}), \quad X,Y\in \T \mathcal S\,. \label{DBin}
		\end{equation}
	\end{lem}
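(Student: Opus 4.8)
The plan is to unwind the definition of the degree of nullity $r$ and to pass everything through a single dictionary relating the radical of $g_\ind^S$, the red zone $\mathcal R$, and $\M$-singularity. Since the tangent space to a leaf satisfies $\T_x S=\Im\Pi^\sharp\vert_x$ and $g_\ind^S=\iota^*g$ is merely $g$ restricted to these subspaces, the radical \eqref{nullD} may be rewritten as
\begin{equation*}
\operatorname{Rad}\T_x S=\bigl\{\xi\in\Im\Pi^\sharp\vert_x : g(\xi,w)=0\ \ \forall\,w\in\Im\Pi^\sharp\vert_x\bigr\}.
\end{equation*}
Comparing this with the definition of $\mathcal R$, a point $x\in S$ lies in $\mathcal R\cap S$ exactly when $\operatorname{Rad}\T_x S\neq\{0\}$, that is when $r(x)>0$; by the remark preceding the lemma this is in turn equivalent to $x$ being $\M$-singular and to the degeneracy of $(g_\ind^S)_x$. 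I would record this three-way equivalence first, since it is the only input beyond the definitions that the argument needs.

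With the dictionary in hand, the first two cases are immediate. If $r=0$ then $\operatorname{Rad}\T_x S=\{0\}$ for every $x\in S$, so $g_\ind^S$ is non-degenerate throughout, $\mathcal R\cap S=\emptyset$, all points are $\M$-regular and $S$ is a good leaf; the inverse $(g_\ind^S)^{-1}$ then exists globally and \eqref{DB} applies verbatim. If $r=\dim S$ then $\operatorname{Rad}\T_x S=\T_x S$, so $g_\ind^S$ vanishes identically on $\T S$, admits no inverse, and no DB metric can be formed: $S$ is a bad leaf.

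The substantive case is $0<r<\dim S$. Here $S$ meets $\mathcal R$ but not in every tangent direction, so deleting the intersection yields a nonempty green zone $\mathcal S=S\setminus(\mathcal R\cap S)$ on which, by the dictionary, the nullity drops to $0$ and all points are $\M$-regular. Invoking the fact recalled just before the lemma that $\M$ is non-degenerate on green zones, I conclude that $g_\ind^{\mathcal S}$ is non-degenerate on $\T\mathcal S$ and hence invertible there. The right-hand side of \eqref{DBin} then type-checks, since $i_X\omega^{\mathcal S}$ and $i_Y\omega^{\mathcal S}$ are covectors on $\mathcal S$ while $(g_\ind^{\mathcal S})^{-1}$ is the contravariant $2$-tensor pairing them; non-degeneracy of the resulting $\tau_{\db}^{\mathcal S}$ follows from that of $\omega^{\mathcal S}$ together with that of $g_\ind^{\mathcal S}$, so it is a genuine pseudo-metric on the green zone.

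The main obstacle I anticipate is not any single computation but the well-posedness of labelling an entire leaf by one number $r$: the nullity $\dim\operatorname{Rad}\T_x S$ is a pointwise quantity that can jump along $\mathcal R\cap S$. To make the trichotomy exhaustive and meaningful I would restrict to connected components of $S$, respectively of its green zone, on which $\operatorname{Rad}\T S$ has constant rank—exactly the regularity encoded in the lightlike-leaf definition via the requirement that $\Delta$ be a differentiable distribution—so that $g_\ind^{\mathcal S}$ is a bona fide pseudo-Riemannian metric and \eqref{DBin} indeed defines the DB metric on the green zone.
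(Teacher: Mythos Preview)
Your proposal is correct and, in fact, more explicit than the paper's own treatment: the paper does not give a formal proof of this lemma at all, presenting it instead as ``a constrained result of these observations'' immediately following the definitions of $\M$-singularity, red zones, green zones, and lightlike leaves. The \texttt{proof} environment that appears just below the lemma in the paper actually belongs to the subsequent remark about Riemannian versus pseudo-Riemannian green zones, not to the lemma itself.

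Your dictionary relating $\operatorname{Rad}\T_xS$, membership in $\mathcal R$, and $\M$-singularity is exactly the content of the remarks preceding the lemma, so you are simply making explicit what the paper leaves implicit. Your final paragraph on the pointwise nature of $r$ and the need for constant rank is a genuine improvement in precision over the paper, which is loose on this point (it defines $r$ only for lightlike leaves via the differentiable distribution $\Delta$, yet uses it in the trichotomy as if it were always a single number attached to $S$).
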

	\noindent We note that the red zone lead to genuine singularity in the DB metric. Therefore, the space loses its metric properties locally, creating a `hole' in the metric structure.
	
	\begin{rem}
		The green zone of a lightlike leaf includes regions where the induced metric $g_{\ind}^{\mathcal S}$, has both definite and indefinite signatures, corresponding to the Riemannian (not necessarily Euclidean)
		green zone ${\mathcal S}$ and 
		pseudo-Riemannian with indefinite metric 
		(not necessarily Lorentzian)
		green zone ${\mathcal S}$, respectively.
	\end{rem}
	\begin{proof}
		When restricting the null cone of $\T_xM$ to a symplectic leaf $S$, we define $\mathcal N_x:=\{v\in \T_x S-\{0\}:\,\, g_{\ind}^S(v,v)=0\}$.  We always have $\mathcal N_x  \subseteq \operatorname{Rad}\T_x S $. The Riemannian green zone ${\mathcal S}$ is the region where $\operatorname{Rad}\T_x S=\mathcal N_x$ for all $x\in {\mathcal S}$. The pseudo-Riemannian green zone ${\mathcal S}$ is the region where $\mathcal N_x  \subset \operatorname{Rad}\T_x S$ for all $x\in {\mathcal S}$.
	\end{proof}

	Note that, in the case of three-dimensional Poisson manifolds, Riemannian green zones are regions ${\mathcal S}^{E} \subset S$ where the induced metric  has Euclidean signature, while pseudo-Riemannian green zones ${\mathcal S}^{L} \subset S$  are regions where the induced metric has Lorentzian signature.
	
	Finally, Theorem \ref{gradient th} for indefinite signatures reads as follows:
	\begin{thm}\label{gradient ps}
		Let $(M,\Pi,g)$ be a smooth Poisson manifold equipped with a psuedo-Reimannian structure then:
		\begin{enumerate}
			\item On a good leaf $S$, the GDB vectore feild is a  gradient vector field with respect to the DB metric.
			\item On a bad leaf, DB metric is undefined and consequently, the GDB vector field is also not defined there.
			\item On a lightlike leaf $S$, the GDB vectore field is a gradient vector field of $G|_{\mathcal S}$ on each green zone $\mathcal S$ with respect to the DB metric:
			\begin{equation} \label{thmeq1}
				(\pa_G)(x)=-\nabla_{\ta^{\mathcal S}_{\db}} {(G|_{\mathcal S})}(x),\quad x\in \mathcal S\,. 
			\end{equation}
			
		\end{enumerate}    
	\end{thm}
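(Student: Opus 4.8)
The plan is to reduce the pseudo-Riemannian statement to the already-established Riemannian result, Theorem \ref{gradient th}, by working pointwise on the locus where all relevant objects are non-degenerate. The three cases of Theorem \ref{gradient ps} correspond directly to the trichotomy of the preceding lemma, so I would dispatch cases (1) and (2) quickly and concentrate the real argument on case (3). For case (2), there is essentially nothing to prove: on a bad leaf the induced two-tensor $g_{\ind}^S$ is degenerate on all of $\T S$, so $(g_{\ind}^S)^{-1}$ in the defining formula \eqref{DB} does not exist, and hence neither the DB metric nor the gradient $\nabla_{\ta^S_{\db}}$ can be formed; I would simply remark that $\M$ is degenerate everywhere on such a leaf so $\pa_G = -\M^\sharp(\d G)$ likewise fails to single out a leaf-tangent gradient. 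For case (1), a good leaf has $\mathcal R \cap S = \emptyset$, so by the remark following the definition of the green zone the induced metric $g_{\ind}^S$ is non-degenerate at every point; the key observation is that the proof of Theorem \ref{gradient th} in \cite{BRV} never used positive-definiteness of $g$ in an essential way, only the non-degeneracy of $g_{\ind}^S$ (equivalently, $\M$-regularity at each point), so the same computation relating $\M^\sharp(\d G)$, the symplectic form $\omega^S$, and $(g_{\ind}^S)^{-1}$ goes through verbatim.

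For the substantive case (3), the strategy is localization. On a lightlike leaf $S$, the red zone meets $S$ in the red lines $\mathcal R \cap S$, and by definition the green zone $\mathcal S = S \setminus (\mathcal R \cap S)$ consists exactly of the $\M$-regular points of $S$. First I would invoke the remark establishing that $g_{\ind}^S$ is non-degenerate precisely at the $\M$-regular points, so its restriction $g_{\ind}^{\mathcal S}$ to $\mathcal S$ is a genuine (possibly indefinite) non-degenerate metric, and the DB metric $\tau_{\db}^{\mathcal S}$ of \eqref{DBin} is well-defined on $\mathcal S$. Next, since $\mathcal S$ is an open subset of the symplectic leaf $S$, it is itself a symplectic manifold with the restricted form $\omega^{\mathcal S}$, and the ambient data $(\mathcal S, \omega^{\mathcal S}, g_{\ind}^{\mathcal S})$ now satisfies exactly the non-degeneracy hypothesis that Theorem \ref{gradient th} requires pointwise. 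I would then apply the pointwise identity \eqref{thmeq} — whose derivation is purely algebraic at each point and insensitive to the signature once $g_{\ind}^S$ is invertible — to conclude \eqref{thmeq1} at every $x \in \mathcal S$.

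The one step requiring care is verifying that the GDB vector field $\pa_G = -\M^\sharp(\d G)$ computed in the ambient manifold $M$ genuinely restricts to, and agrees with, the intrinsic gradient $-\nabla_{\ta^{\mathcal S}_{\db}}(G|_{\mathcal S})$ computed inside the leaf. The content here is the chain of identifications used in \cite{BRV}: that $\pa_G$ is tangent to the leaf (this follows from $\pa_G = i_{g^\flat(X_G)}\Pi$ lying in $\Im \Pi^\sharp = \T S$), that $i_{\pa_G}\omega^S = -g_{\ind}^S$-dual of $\d(G|_S)$, and that feeding this through $(g_{\ind}^{\mathcal S})^{-1}$ reproduces the definition \eqref{DBin} of $\tau_{\db}^{\mathcal S}$. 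I expect the main obstacle to be purely bookkeeping: ensuring that every inverse invoked exists at the point $x \in \mathcal S$ in question, i.e.\ that on the green zone we never secretly divide by a degenerate $g_{\ind}^S$. Because $\M$-regularity is exactly the condition that excludes this, the localization to $\mathcal S$ is precisely what licenses the verbatim reuse of the \cite{BRV} argument, and no new analytic input beyond the lemma should be needed.
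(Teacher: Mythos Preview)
Your proposal is correct and follows essentially the same route as the paper: dispatch (1) by noting that the argument of Theorem~\ref{gradient th} in \cite{BRV} uses only non-degeneracy of $g_{\ind}^S$, declare (2) trivial, and handle (3) by localizing to the open green zone $\mathcal S$ where $g_{\ind}^{\mathcal S}$ is non-degenerate and the \cite{BRV} computation applies verbatim. If anything, your write-up is more explicit than the paper's about the tangency of $\pa_G$ and the chain of identifications; the paper's proof additionally remarks that $g_{\ind}^{\mathcal S}$ and $\tau_{\db}^{\mathcal S}$ share the same signature (and hence the same null cones), but this is supplementary to the gradient identity itself.
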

	\begin{proof}
		(1)	The proof is analogous to that for the Riemannian case presented in our previous work \cite{BRV}. (2) is obvious. (3) This result follows from the fact that $g_{\mathrm{ind}}^S$, and therefore the DB metric, is non-degenerate on $\T\mathcal{S}$. Moreover, the non-degenerate induced metrics $g_{\mathrm{ind}}^{\mathcal{S}}$ on the green zones of a lightlike leaf have the same signature as the DB metrics $\tau_{\mathrm{DB}}^{\mathcal{S}}$. This is evident because the co-metric tensor inherits properties from the metric, including its signature. For the pseudo-Riemannian green zone, for a smooth function $G$, suppose $X_G(x)\in \T_x\mathcal S$ is a null vector with respect to $g_{\ind}^{{\mathcal S}}$, then it is a null vector with respect to $\tau_{\text{DB}}^{\mathcal{S}}$:
		\begin{equation}
		[g_{\ind}^{{\mathcal S}}]_{(x)}(X_G,X_G)=0\quad \Longrightarrow\quad  [\tau_{\text{DB}}^{{\mathcal S}}]_{(x)}(X_G,X_G)=[(g_{\ind}^{{\mathcal S}})^{-1}]_{(x)}(\d G,\d G)=0\,.
		\end{equation}
		The tangent space to a symplectic leaf at any point is spanned by Hamiltonian vector fields tangent to the leaf, which are generated by functions constant on the leaf. Thus, the conclusion follows.
	\end{proof}
	
\noindent {\bf Trajectories of GDB vector fields.}	As we have observed, to any smooth function on a Poisson manifold $(M,\Pi,g)$ equipped with a metric, we can associate a Hamiltonian vector field and a GDB vector field. Let us discuss the trajectories of Hamiltonian vector fields and those of GDB vector fields.
	The conservation of the Hamiltonian implies that if we evaluate the Hamiltonian along the flow generated by  $X_G$, it remains constant over time. In contrast, this behaviour changes for the flow of GDB of a Hamiltonian function as follows:

		\begin{lem}[Geometric interpretation of the GDB vector field: Riemannian metric]\label{extrema}
			Let $X_G$ be the Hamiltonian vector field of a smooth function $G$ on a Poisson manifold $(M, \Pi, g)$, where $g$ is a Riemannian metric. Then, on each symplectic leaf, the flow of the GDB vector field $
			\partial_G := \Pi^{\sharp}(g^{\flat}(X_G))$
			strictly decreases the value of $G$ and drives the system toward the critical points (extrema) of $G$.
		\end{lem}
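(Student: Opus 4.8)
The plan is to compute the time derivative of $G$ along the flow of $\partial_G$ and show it is non-positive, vanishing exactly at critical points of $G|_S$. First I would invoke Theorem \ref{gradient th}, which identifies $\partial_G$ with $-\nabla_{\ta^S_{\db}}(G|_S)$ on each symplectic leaf $S$. By the defining property of a gradient with respect to a metric, differentiating $G$ along its own negative gradient flow yields
\begin{equation}
\frac{\d}{\d t}\,G\bigl(x(t)\bigr)=\d G(\partial_G)=\ta^S_{\db}\bigl(\nabla_{\ta^S_{\db}}(G|_S),\,\partial_G\bigr)=-\ta^S_{\db}\bigl(\partial_G,\partial_G\bigr)\,,
\end{equation}
so the question reduces to the sign of $\ta^S_{\db}(\partial_G,\partial_G)$.

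The key step is then to argue that the DB metric $\ta^S_{\db}$ is positive definite on each leaf. Since $g$ is Riemannian, the induced two-tensor $g^S_{\ind}=\io^*g$ is positive definite on every leaf (all leaves are good, as noted in Section \ref{sec:2}), hence so is its inverse co-metric $(g^S_{\ind})^{-1}$. From the definition \eqref{DB}, $\ta^S_{\db}(X,Y)=(g^S_{\ind})^{-1}(i_X\om^S,i_Y\om^S)$, the positive-definiteness of $(g^S_{\ind})^{-1}$ together with the non-degeneracy of the symplectic form $\om^S$ (so that $X\mapsto i_X\om^S$ is injective) shows $\ta^S_{\db}$ is positive definite. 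Consequently $\ta^S_{\db}(\partial_G,\partial_G)\ge 0$, with equality iff $\partial_G=0$, giving the strict decrease of $G$ wherever $\partial_G\neq 0$.

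Finally I would identify the rest points of the flow with the critical points of $G|_S$. Because $\ta^S_{\db}$ is non-degenerate, $\partial_G=-\nabla_{\ta^S_{\db}}(G|_S)$ vanishes at $x$ precisely when $\d(G|_S)_x=0$, i.e.\ at the critical points of the restriction $G|_S$. Combined with the monotonicity above, this shows that along any trajectory $G$ is strictly decreasing except at equilibria, which are exactly the critical points (extrema) of $G|_S$; a standard LaSalle-type argument then confirms that the flow drives the system toward this critical set.

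I expect the main subtlety to be the claim that the equilibria are genuinely the \emph{extrema} rather than merely critical points of $G|_S$: the gradient flow halts at any critical point, including saddles, so the strongest correct statement is convergence to the critical set, with strict decrease guaranteeing that only the local minima of $G|_S$ are \emph{stable} equilibria. I would phrase the conclusion accordingly, noting that saddles and maxima are unstable for the descent flow, so generic trajectories are indeed driven toward the minima.
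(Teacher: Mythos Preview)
Your proof is correct and follows essentially the same route as the paper: compute $\tfrac{\d}{\d t}G$ along the flow via Theorem~\ref{gradient th}, obtain $-\ta^S_{\db}(\partial_G,\partial_G)$, and use positive definiteness of $\ta^S_{\db}$ to conclude strict decrease away from critical points. Your additional justification of why $\ta^S_{\db}$ is positive definite, and your caveat that the flow halts at all critical points (not only extrema), are both more careful than the paper's own argument, which simply asserts positive definiteness and conflates critical points with extrema.
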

		
		\begin{proof}
			Let $\gamma(t)$ be an integral curve of the GDB vector field on a symplectic leaf $S$, i.e.
			\[
			\dot{\gamma}(t) = \partial_G|_{\gamma(t)}\,.
			\]
			The rate of change of $G$ along this flow is
			\begin{equation}\label{flow}
			\frac{d}{dt} G(\gamma(t)) = dG(\dot{\gamma}(t)) = \tau_{\mathrm{DB}}(-\nabla_{\tau_{\mathrm{DB}}}(G|_{S}), \dot{\gamma}(t))
			= \tau_{\mathrm{DB}}(-\partial_G, \partial_G)
			= -\|\partial_G\|^2_{\tau_{\mathrm{DB}}} \leq 0\,.
			\end{equation}
			The last inequality holds because $\tau_{\mathrm{DB}}$ is positive definite, i.e., $S$ is a good leaf. In this case, as we follow the flow generated by $\partial_G$, the function $G$ strictly decreases, except at critical points where $\partial_G = 0$. Thus, the flow acts as a geometric analogue of gradient descent, ensuring that trajectories move toward points where $\partial_G = 0$. Note that since $\partial_G = (\Pi^{\sharp} \circ g^{\flat})(X_G)$, the critical points of $X_G$ are also critical points of $\partial_G$.
		\end{proof}
		
	Therefore, the GDB vector field $\partial_G$ defines a geometric flow that unites the symplectic (Hamiltonian) structure and the Riemannian metric to realize a ``steepest descent'' of $G$ on each symplectic leaf. Analogous to classical gradient descent, the GDB flow strictly decreases $G$ (except at critical points) with respect to the geometry determined by $\Pi$ and $g$, ensuring that trajectories asymptotically converge to the critical points of $G$. This mirrors the Lyapunov function criterion for stability: $G$ serves as a strict Lyapunov function for the GDB flow, (see \cite{BRV}).
		
		\medskip
		Lightlike manifolds with one-dimensional lightlike distributions are important objects that have been extensively studied in both mathematics and physics. Geometrically, their lightlike distributions are naturally integrable, which is a valuable property. Such manifolds frequently serve as models for singular regions in the spacetime of general relativity.
		
		In what follows, we focus on three-dimensional Poisson manifolds. In this context, the maximal symplectic leaves are two-dimensional. Consequently, lightlike leaves in our setting either possess one-dimensional lightlike distributions or are zero-dimensional (i.e., point-like leaves). 
		
			\begin{rem}[Geometric interpretation of the GDB vector field: pseudo-Riemannian metric]\label{extremap}
				Let $(M, \Pi, g)$ be a three-dimensional Poisson manifold equipped with a pseudo-Riemannian metric $g$, and let $G$ be a smooth function on $M$. A lightlike symplectic leaf  $S$ contains both Euclidean green zones ${\mathcal S}^E$ and Lorentzian green zones ${\mathcal S}^L$. On the Euclidean green zones ${\mathcal S}^E$, the GDB vector field behaves as in Lemma~\ref{extrema}: the flow strictly decreases $G$ and trajectories converge to the critical points of $G$.
				
				On the Lorentzian green zones ${\mathcal S}^L$, the sign of the rate of change of $G$ along the GDB flow at each point $s \in {\mathcal S}^L$ is determined by the causal character (spacelike, timelike, or null) of $\partial_G|_s$, see (\ref{flow}):
				\begin{itemize}
					\item If $\partial_G|_s$ is spacelike, $G$ decreases along the flow;
					\item If $\partial_G|_s$ is timelike, $G$ increases along the flow;
					\item If $\partial_G|_s$ is null (lightlike), $G$ is constant along the flow.
				\end{itemize}
				Therefore, unlike the Riemannian case, the GDB flow in Lorentzian regions does not provide a globally defined descent direction for $G$; instead, the behavior is determined pointwise by the signature of the DB metric and the local causal character of the GDB vector field.
			\end{rem}

		\noindent The geometric interpretation of the Theorem \ref{gradient ps} and Lemma \ref{extremap} in the context of the Lorentzian green zone requires special investigation, which we will undertake below.
		
	\begin{defn}
		Let ${\mathcal S}^L$ be a Lorentzian green zone of a Poisson manifold equipped with a pseudo-Riemannian metric. A GDB vector field $\partial_G$ is called \textbf{globally null} on ${\mathcal S}^L$ if
	$
		\tau_{\mathrm{DB}}^{{\mathcal S}^L}(\partial_G, \partial_G) = 0
		$
		at all points of ${\mathcal S}^L$. 	We say that $\partial_G$ is \textbf{locally null} on ${\mathcal S}^L$ if its flow can encounter points where $\partial_G$ lies on the null cone of the metric, that is, there exist points in ${\mathcal S}^L$ where
		$
		\tau_{\mathrm{DB}}^{{\mathcal S}^L}(\partial_G, \partial_G) = 0.
		$
	\end{defn}
		
\noindent

Consequently, the descent property of the GDB flow fails for a globally null GDB vector field, as $G$ remains constant along its trajectories. For a locally null GDB vector field, the descent property fails precisely at points where the flow encounters the null cone, complicating optimization by not providing a valid descent direction at those points.

\begin{rem}
	Let $(M, \Pi, g)$ be a smooth Poisson manifold equipped with a pseudo-Riemannian metric. If the GDB vector field $\partial_G$ of a smooth function $G$ is proportional to the Hamiltonian vector field $X_G$, then $\partial_G$ is a globally null GDB vector field.
\end{rem}
\begin{proof}
	Suppose $\partial_G$ is proportional to $X_G$, i.e., $g^{\flat}(X_G) = k\, dG$ for some constant $k$. By the definition of the Hamiltonian vector field, $X_G(G) = 0$ and the proportionality implies  $\pa_G(G)=0$ at all points. This means that $\partial_G$ is tangent to the level sets of $G$ and so  $\tau_{\mathrm{DB}}^{{\mathcal S}^L}(\partial_G, \partial_G) = 0$ at all points. Therefore, $\partial_G$ is a globally null GDB vector field.
\end{proof}

\begin{lem}
	On green zones, the Hamiltonian vector field $X_G$ of a function $G$ is orthogonal to the GDB vector field $\partial_G$ of $G$ with respect to the DB metric.
\end{lem}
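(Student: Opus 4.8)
The plan is to reduce the DB inner product $\tau_{\db}(X_G,\pa_G)$ to an ordinary pairing of a one-form with a vector, and then to recognize that pairing as $X_G(G)=\{G,G\}=0$. The whole computation is algebraic and pointwise, carried out on the green zone where $g_{\ind}^{\mathcal S}$ is non-degenerate, so that $(g_{\ind}^{\mathcal S})^{-1}$ and hence $\tau_{\db}$ genuinely exist.

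First I would record the bridge identity relating $\Pi^{\sharp}$ to the leaf symplectic form: for every one-form $\alpha$ on $M$ one has $i_{\Pi^{\sharp}\alpha}\,\omega^{\mathcal S}=-\iota^*\alpha$, which is just the defining relation $i_{X_G}\omega^{\mathcal S}=-\d G|_{\mathcal S}$ applied to a general Hamiltonian covector $\alpha$ in place of $\d G$. Applying this twice gives, with $\alpha=\d G$, the identity $i_{X_G}\omega^{\mathcal S}=-\d G|_{\mathcal S}$, and, with $\alpha=g^{\flat}(X_G)$ (recall $\pa_G=\Pi^{\sharp}(g^{\flat}(X_G))$), the identity $i_{\pa_G}\omega^{\mathcal S}=-\iota^*(g^{\flat}(X_G))$.

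The second key step is to simplify the restricted covector $\iota^*(g^{\flat}(X_G))$. Since $X_G$ is tangent to the leaf, for any $w\in\T_x\mathcal S$ one has $\langle\iota^*(g^{\flat}(X_G)),w\rangle=g(X_G,w)=g_{\ind}^{\mathcal S}(X_G,w)$, so $\iota^*(g^{\flat}(X_G))=(g_{\ind}^{\mathcal S})^{\flat}(X_G)$; that is, the ambient flat of $X_G$ restricts to the induced flat. Substituting into \eqref{DBin} and using that $(g_{\ind}^{\mathcal S})^{-1}$ is inverse to $(g_{\ind}^{\mathcal S})^{\flat}$, the two minus signs cancel and one is left with
\[
\tau_{\db}(X_G,\pa_G)=(g_{\ind}^{\mathcal S})^{-1}\big(\d G|_{\mathcal S},\,(g_{\ind}^{\mathcal S})^{\flat}(X_G)\big)=\langle \d G|_{\mathcal S},X_G\rangle=\d G(X_G).
\]

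Finally I would close the argument with the Poisson identity $\d G(X_G)=X_G(G)=\Pi(\d G,\d G)=0$, which holds by the antisymmetry of $\Pi$ and is precisely the conservation of $G$ along its own Hamiltonian flow already noted in the text. Hence $\tau_{\db}(X_G,\pa_G)=0$ on the green zone, and since the computation nowhere uses positivity, it is signature-independent and covers both the Euclidean and Lorentzian green zones at once. I expect the only delicate point to be the bookkeeping around the restriction maps—specifically verifying $\iota^*(g^{\flat}(X_G))=(g_{\ind}^{\mathcal S})^{\flat}(X_G)$ and that pairing $(g_{\ind}^{\mathcal S})^{-1}$ against the induced flat returns the vector itself—rather than any genuine analytic difficulty; everything else is formal once the bridge identity $i_{\Pi^{\sharp}\alpha}\omega^{\mathcal S}=-\iota^*\alpha$ is in place.
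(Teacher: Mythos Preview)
Your argument is correct and is essentially the paper's own computation, just organized slightly differently: the paper packages the same steps as the identity $(\tau_{\db})^{\flat}(\pa_G)=\omega(X_G,\cdot)$ and then evaluates on $X_G$ to get $\omega(X_G,X_G)=0$, whereas you substitute $i_{X_G}\omega^{\mathcal S}$ and $i_{\pa_G}\omega^{\mathcal S}$ directly into \eqref{DBin} and land on $\d G(X_G)=\{G,G\}=0$. Since $\d G(X_G)=-\omega(X_G,X_G)$, the two endpoints are the same statement, and your careful handling of the restriction $\iota^*(g^{\flat}(X_G))=(g_{\ind}^{\mathcal S})^{\flat}(X_G)$ is exactly the step implicit in the paper's chain of musical isomorphisms.
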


\begin{proof}
	By definition, the musical isomorphism for the DB metric gives
	\begin{equation}
	(\tau_{\mathrm{DB}})^{\flat}(\partial_G) = -\left[\omega^{\flat} \circ (g_{\mathrm{ind}}^{\flat})^{-1} \circ \omega^{\flat}\right]\left(\Pi^{\sharp} \circ g^{\flat}\right)(X_G) = \omega(X_G, \cdot).
	\end{equation}
	Therefore, $\tau_{\mathrm{DB}}(\partial_G, X_G) = (\tau_{\mathrm{DB}})^{\flat}(\partial_G)(X_G) = \omega(X_G, X_G) = 0\,.$
\end{proof}

\noindent
Note that in the Euclidean green zone ${\mathcal S}^E$, this orthogonality corresponds to the usual notion of perpendicularity. In the Lorentzian green zone ${\mathcal S}^L$, however, null) vectors can be self-orthogonal.

		\section{Symplectic foliation of \texorpdfstring{$\mathbb{R}^3$}{R3}  induced by a class of Poisson structures} \label{sec:4}

		In this section, we consider a broad class of Poisson structures on $\R ^3$ that illustrate all the new concepts introduced in the previous sections. Among these, we present three specific examples: a linear Poisson structure, a quadratic Poisson structure, and a Poisson Lie group. The symplectic leaves and green zones for each case, where our main theorem is applicable, are carefully examined.
		
		\medskip
		The canonical basis of the Lie algebra $\sl_2$,
		$${\bf e}_1=\left(\begin{array}{cc}
			0&1\\
			0&0
		\end{array}\right),~~~{\bf e}_2=\left(\begin{array}{cc}
			0&0\\
			1&0
		\end{array}\right),~~~{\bf e}_3=\left(\begin{array}{cc}
			1&0\\
			0&-1
		\end{array}\right),$$
		gives rise to the standard form of its Lie brackets: 
		\begin{equation}
			[{\bf e}_1,{\bf e}_2]={\bf e}_3; \qquad [{\bf e}_1,{\bf e}_3]=-2{\bf e}_1; \qquad [{\bf e}_2,{\bf e}_3]=2{\bf e}_2. \label{sl2}
		\end{equation}
		They induce a linear Poisson structure on the dual. More precisely, rescaling the basis elements, $x := {\bf e}_1/ \sqrt{2}$, $y:= {\bf e}_2 / \sqrt{2}$, and $z:= {\bf e}_3 /2$, and considering them as coordinates on the dual $\sl(2,\mathbb{R})^* \cong \R^3$, we get 
		\begin{equation}
			\{ x , y \}_{\mathrm{lin}}=z \: , \qquad  \{z , x \}_{\mathrm{lin}}= x \: , \qquad \{z , y \}_{\mathrm{lin}}= -y \: .\label{sl2brackets}
		\end{equation}
		These can be considered as the fundamental Poisson brackets on $\R^3$, extended to all smooth functions by means of the Leibniz rule. 
		
		\medskip \noindent
		We now consider the following non-linear generalization of these brackets:
		\begin{equation}
			\{ x , y \}= U(z) + V(z) \, xy   \: , \qquad  \{z , x \}= x \: , \qquad \{z , y \}= -y \: ,\label{sl2nonlin}
		\end{equation}
		where $U$ and $V$ are arbitrarily chosen smooth functions. For the choice $U = \mathrm{id}$ and $V=0$ we regain the  formulas  \eqref{sl2brackets}. It is easy to verify that \eqref{sl2nonlin} satisfies the Jacobi identity and thus defines a Poisson structure. Such Poisson structures appeared in the study of two-dimensional gravity models \cite{TT}.

		\medskip \noindent
		A nice feature of these brackets is that a Casimir function, a non-constant function in the center of the Poisson bracket, can be found explicitly: 
		\begin{lem}[\cite{TT}] \label{lemC}
			Let $P$ be a primitive of the function $V$, $P'(z)=V(z)$, and $Q$ 
			such that $Q'(z) = U(z) \exp(P(z))$. Then the function $ \mathcal C \in C^\infty(\mathbb{R}^3)$ defined by
			\begin{equation} \label{C}
				\mathcal C(x,y,z) := xy \exp{\left(P(z)\right)} + Q(z)
			\end{equation}
			is a Casimir function of the brackets \eqref{sl2nonlin}.
		\end{lem}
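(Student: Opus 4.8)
The plan is to check directly that $\mathcal C$ is central, i.e.\ $\{\mathcal C, f\}=0$ for every $f\in C^\infty(\R^3)$. Since the Poisson bracket is a derivation in each argument, the Hamiltonian vector field $X_{\mathcal C}=\{\mathcal C,\cdot\}$ is a genuine vector field, and hence vanishes as soon as it vanishes on the coordinate functions $x,y,z$. So it suffices to verify the three identities
\begin{equation*}
\{\mathcal C, x\}=0,\qquad \{\mathcal C, y\}=0,\qquad \{\mathcal C, z\}=0.
\end{equation*}
Throughout I would use the fundamental brackets \eqref{sl2nonlin}, together with the chain rule $\{h(z),\cdot\}=h'(z)\{z,\cdot\}$ for any function of $z$ alone, and the defining relations $P'(z)=V(z)$ and $Q'(z)=U(z)\exp(P(z))$.

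The $z$-component is immediate and uses none of the hypotheses on $P,Q$: since $\{z,z\}=0$ and $\{xy,z\}=x\{y,z\}+y\{x,z\}=xy-xy=0$, both summands of $\mathcal C$ Poisson-commute with $z$. For the $x$-component, expanding $\{xy\,e^{P(z)}+Q(z),x\}$ by Leibniz gives
\begin{equation*}
\{\mathcal C, x\}=x\{y,x\}\,e^{P(z)}+xy\,e^{P(z)}P'(z)\{z,x\}+Q'(z)\{z,x\}.
\end{equation*}
Inserting $\{y,x\}=-(U(z)+V(z)xy)$, $\{z,x\}=x$ and $P'=V$, the two quadratic terms $\pm V(z)x^2y\,e^{P(z)}$ cancel, leaving $-U(z)x\,e^{P(z)}+Q'(z)x$; this vanishes precisely because $Q'(z)=U(z)\exp(P(z))$. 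The $y$-component is entirely analogous: the opposite sign from $\{z,y\}=-y$ together with $\{x,y\}=U+Vxy$ produces exactly the same two cancellations, giving $U(z)y\,e^{P(z)}-Q'(z)y=0$.

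I do not expect a genuine obstacle here; the statement is a direct verification and the only care needed is in the bookkeeping of signs. The one conceptual point worth isolating is the mechanism behind the two cancellations: the integrating factor $\exp(P(z))$, with $P'=V$, is exactly what annihilates the quadratic nonlinearity $V(z)xy$ appearing in $\{x,y\}$, after which the antiderivative $Q$, with $Q'=U\exp(P)$, absorbs the remaining linear-in-$U$ term. These two requirements are what force the defining ODEs for $P$ and $Q$; conversely, reversing the computation shows that for a candidate of the product form $xy\,e^{P(z)}+Q(z)$ they are also necessary, since a quadratic remainder $(P'-V)x^2y\,e^{P}$ can never be cancelled by the purely linear term $Q'(z)x$.
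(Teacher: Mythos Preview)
Your verification is correct: checking $\{\mathcal C,x\}=\{\mathcal C,y\}=\{\mathcal C,z\}=0$ via Leibniz and the chain rule is exactly the right approach, and your bookkeeping of signs and the two cancellations (the $V$-term killed by the integrating factor $e^{P}$, then the $U$-term absorbed by $Q'$) is accurate. The paper itself does not supply a proof of this lemma --- it is stated with a citation to \cite{TT} --- so there is nothing further to compare; your direct computation is the standard and expected argument.
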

		\noindent The generic symplectic leaves are obtained simply from putting $ \mathcal C$ equal to some constant $c \in \R$, $\mathcal C(x,y,z):= c$. This will permit us to  visualize the leaves in different cases. 
		
		\smallskip \noindent
		While the generic leaves are two-dimensional, there also exist point-like singular leaves. These singular leaves occur when the right-hand side of equation \eqref{sl2nonlin} vanishes, specifically when $(x,y)=(0,0)$ and $z$ is a root of the function $U$. Consequently, the singular symplectic leaves are confined to the $z$-axis and are located at these specific $z$ values. We denote the union of all singular symplectic leaves of the Poisson manifold $(M,\Pi)$ by
		\begin{equation} \label{Lsing}
			{\cal L}_{sing} := \{(0,0,z) \in \R^3 \vert U(z)=0\} \, .
		\end{equation}
		All other leaves are regular. 
		
		Let us now depict some of the symplectic leaves in particular cases: 
		\begin{exmp}[Linear brackets] \label{sl2leaves}
			For the brackets \eqref{sl2brackets}, the Casimir function \eqref{C} becomes 
			\begin{equation}
				\mathcal C_{\mathrm{lin}}(x,y,z)=xy + \tfrac{1}{2}z^2 \label{Clin}.
			\end{equation} 
			To plot some of its well-known level surfaces, it is convenient to use the  coordinates 
			\begin{equation}\label{XYT}
				X := z \: ,
				\quad Y := \frac{x + y}{\sqrt{2}} \: , \quad T := \frac{x - y}{\sqrt{2}} \,,
			\end{equation}
			which gives
			\begin{equation}
				2\mathcal C_{\mathrm{lin}}= X^2 + Y^2 -T^2 \, .\label{Clin2}
			\end{equation}
			In this example, there exists precisely one singular symplectic leaf, located at the origin. The level surface $\mathcal C_{\mathrm{lin}}=c$ for $c=0$ divides into three distinct leaves: the singular leaf at the origin and two cones\footnote{We continue to refer to these as ``cones" despite the exclusion of their tips.}, one with $T>0$ and the other with $T<0$, both of which are regular leaves. For $c>0$, we obtain a one-sheeted hyperboloid, which is topologically equivalent to a cylinder. In contrast, for $c<0$, we observe two-sheeted hyperboloids, representing two separate regular leaves, each with a trivial topology. An illustration of these configurations is provided in Fig.~!\ref{fig1}.
			\begin{figure}[!]
				\subfloat[a][]
				\centering
				\scalebox{0.1}[0.1]{\includegraphics{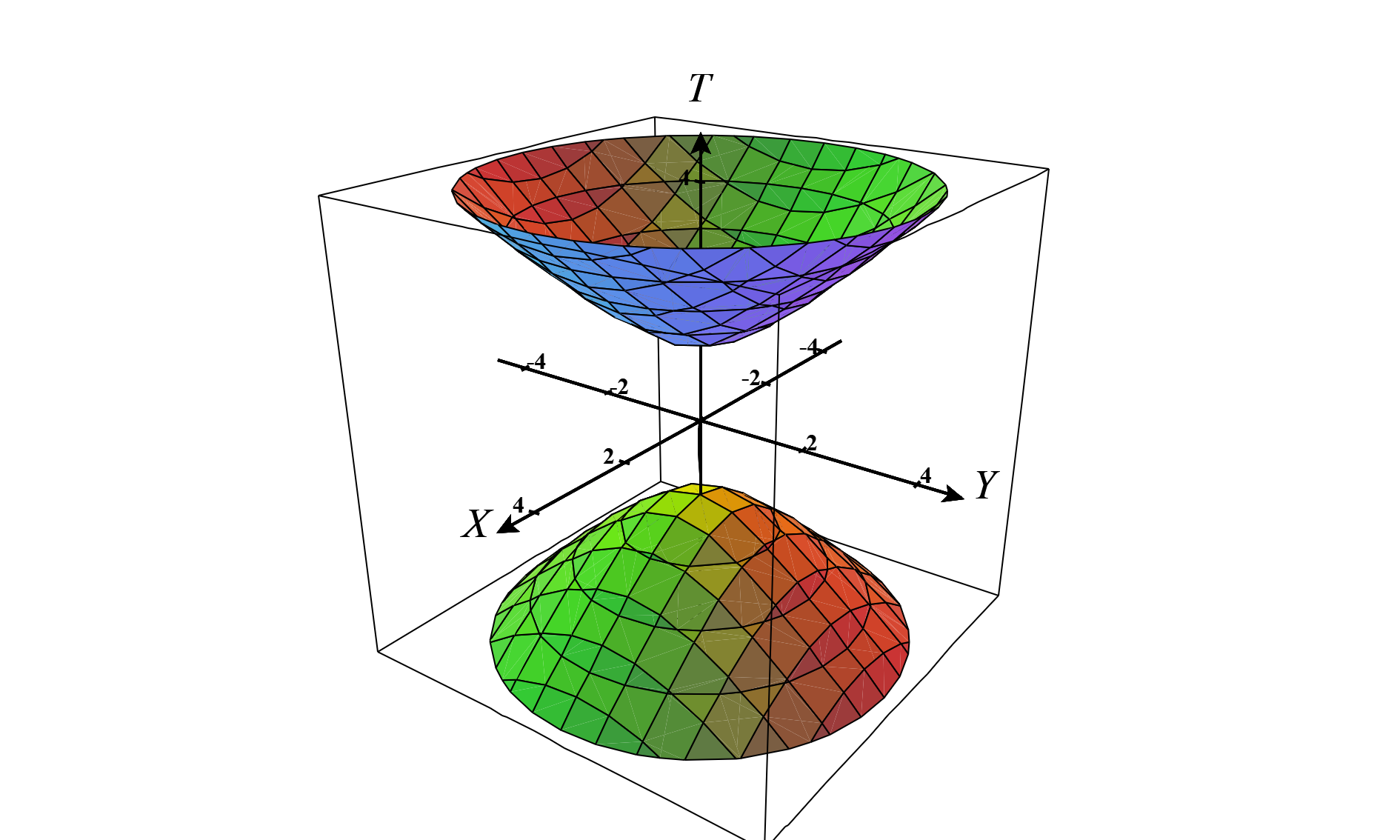}} 
				\hspace{-2.3cm}
				\subfloat[b][]
				{\includegraphics[scale=0.1]{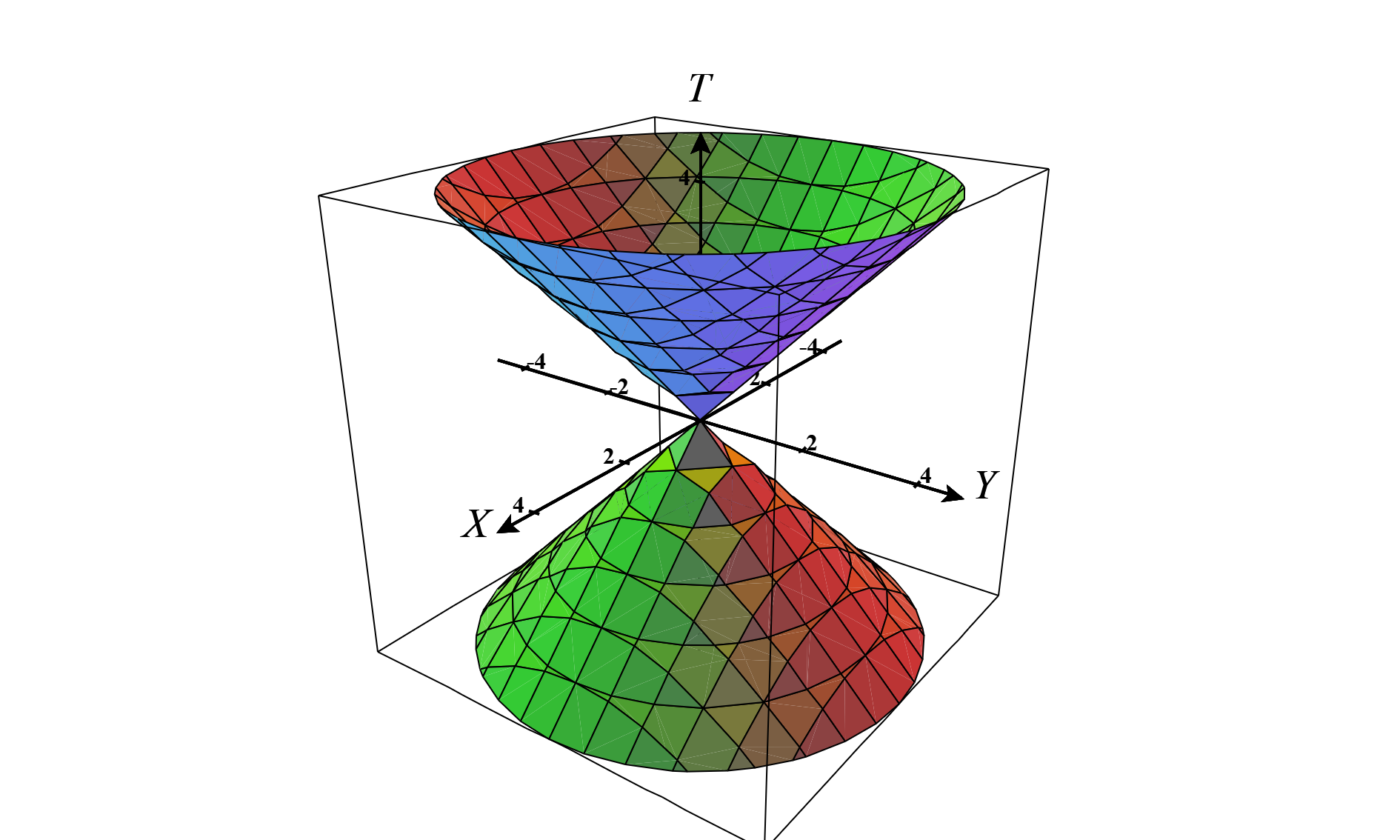}} 
				\hspace{-2.3cm}
				\subfloat[c][]
				{\includegraphics[scale=0.1]{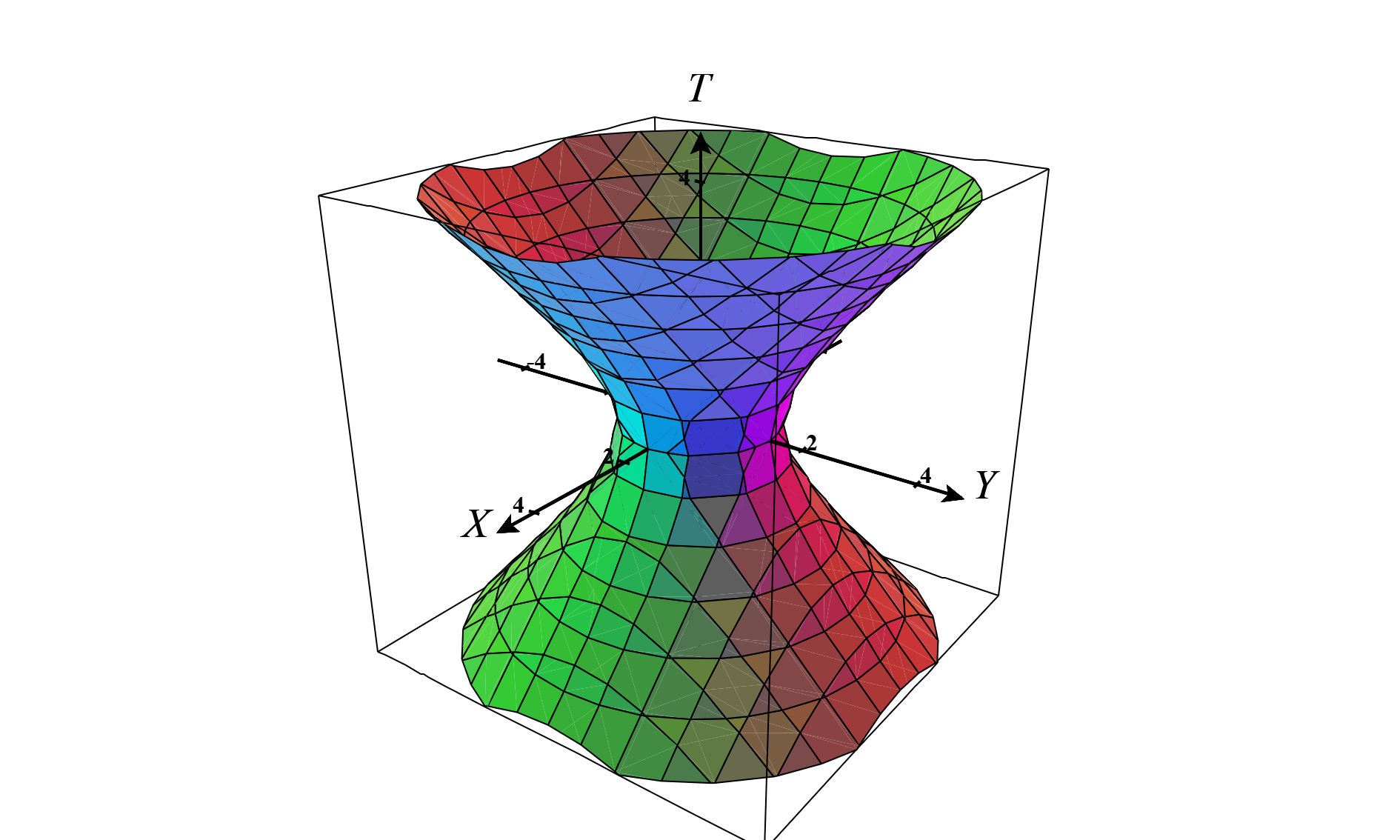}}
				\caption{Symplectic leaves on $\sl(2)^*$:  (A) for $c=-1$, (B) for $c=0$, and (C) for $c=1$  \\ } \label{fig1}
			\end{figure}
		\end{exmp}

		Among the special non-linear cases of equation \eqref{sl2nonlin}, let us first examine the following: 
		
		\begin{exmp}[Quadratic brackets] \label{ex:quad}
			With the choice 
			\begin{equation} \label{Uqua}
				U_{\mathrm{qua}}(z) := {3z^2 -1} \quad , \qquad  V_{\mathrm{qua}}(z) := 0 \, ,
			\end{equation}
			we obtain quadratic brackets from \eqref{sl2nonlin}. The Casimir \eqref{C} takes the form \begin{equation}
				\mathcal C_{\mathrm{qua}} = xy +  z^3 - z \, ,
			\end{equation}
			
			\noindent when choosing $P=0$ and $Q=z^3 - z$. (Changing the integration constants for $P$ and $Q$, leads to the (irrelevant) redefinition  $\mathcal C_{\mathrm{qua}} \mapsto\, e^a\, \mathcal C_{\mathrm{qua}} + b$ for some $a, b\in \R$).
			
			\smallskip \noindent In this case, there are precisely two singular leaves, located at $(0,0,\tfrac{1}{\sqrt{3}})$ and $(0,0,-\tfrac{1}{\sqrt{3}})$, as $\pm \tfrac{1}{\sqrt{3}}$ are the two zeros of the function $U$, cf.\ \eqref{Uqua}. These singular leaves correspond to the critical values $c=\pm 2 \sqrt{3}$ of the Casimir function, implying  that the level set splits into regular and singular leaves for those values. For all other values of $c$ we have two-dimensional, and thus regular symplectic leaves. For example, if we choose $c=1$, we obtain a topologically trivial symplectic leaf, depicted in Fig.~\!\ref{fig:cquad1}. 
			\begin{figure}[H]
				\centering
				\includegraphics[width=0.4\linewidth]{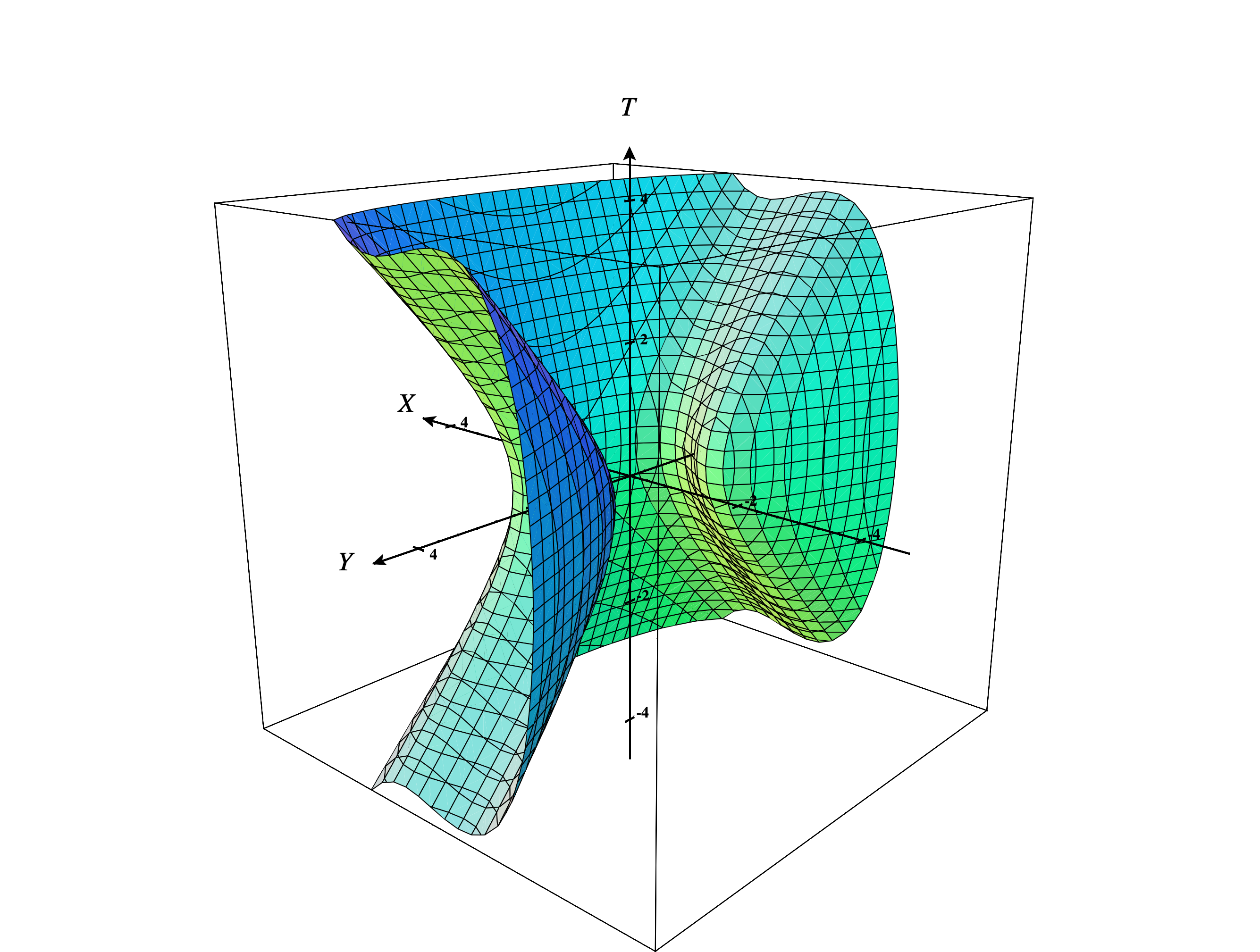}
				\caption{Quadratic bracket: symplectic leaf for $c=1$}
				\label{fig:cquad1}
			\end{figure}
			Conversely, for $c=0$, we encounter a more complex symplectic leaf's structure. This leaf essentially consists of two cylinders oriented orthogonally to each other and joined to form a single leaf with two holes—see Fig.~!\ref{fig:cquad-2}. Topologically, this surface $S$ is equivalent to a genus-one surface (a torus) with one puncture.
			\begin{figure}[H]
				\subfloat[a][]
				\centering
				\scalebox{0.1}[0.1]{\includegraphics{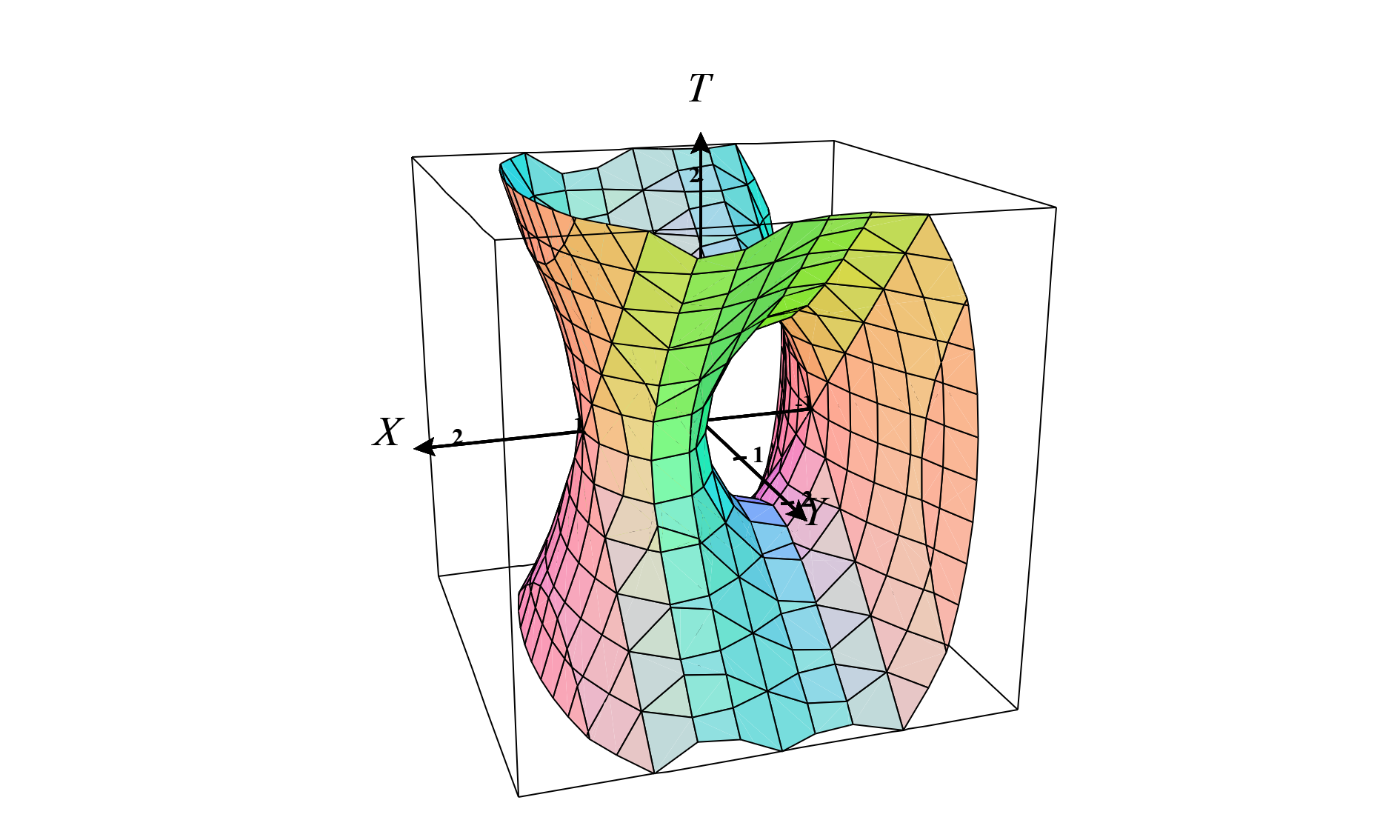}} 
				\subfloat[c][]
				{\includegraphics[scale=0.1]{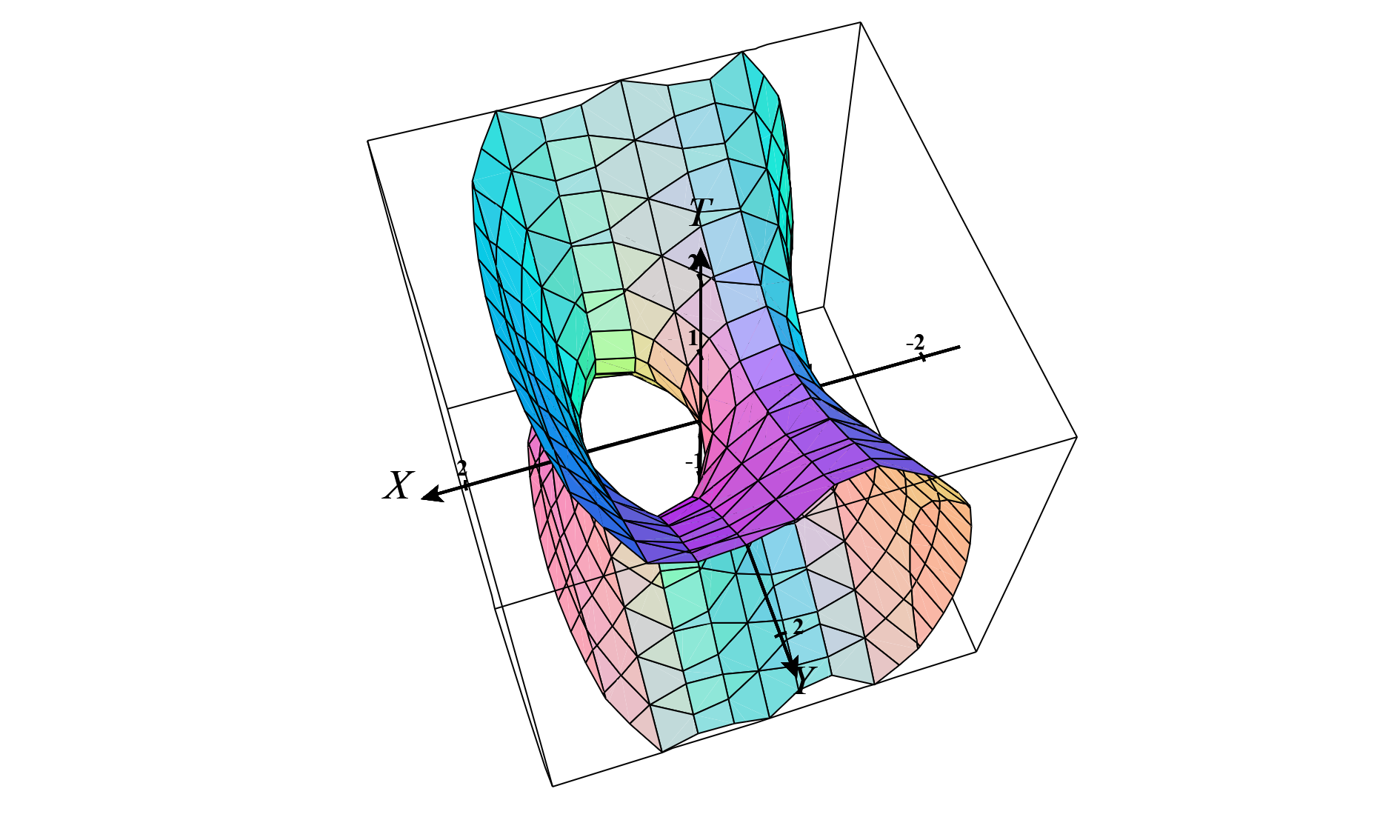}}
				\caption{Quadratic bracket: symplectic leaf $S$ for $c=0$:  (A) seen from the side (B) seen from above. Topologically $S$ is a punctured torus. } \label{fig:cquad-2}
			\end{figure}
		\end{exmp}
		
		The next example is a Poisson-Lie group based on the three-dimensional \emph{book Lie algebra} \footnote{The regular coadjoint orbits of this Lie algebra resemble the pages of an open book, hence its name.} \cite{LA}. This example illustrates a case where the function $V$ in equation \eqref{sl2nonlin} is non-zero, in contrast to the previous examples.
		
		\medskip
		\begin{exmp}[Poisson-Lie group \cite{BMR}\footnote{For an easier comparison with  \cite{BMR}, use the coordinates  $x_1:=z$, $x_2 := \sqrt{2}\, y$, and  $x_3 := \sqrt{2}\, x$ in the formulas below and rescale $C_{\mathrm{grp}}$ by a factor of $\tfrac{1}{2}$.}] \label{PLG}
			The book Lie algebra $\widetilde{ \mathfrak{g} }$ is defined by the Lie brackets: $[\widetilde z, \widetilde x]= -\eta \widetilde x$, $[\widetilde z, \widetilde y]= -\eta \widetilde y$, and $[\widetilde x, \widetilde y]= 0$. Here, $\widetilde{x}$, $\widetilde{y}$, and $\widetilde{z}$ are generators of the algebra, and $\eta$ is a non-zero real parameter whose significance will become visible shortly.  The pair $(\widetilde{\mathfrak{g}}, \sl(2,\mathbb{R}))$ forms a Lie bialgebra. Integrating $\widetilde{ \mathfrak{g} }$ to its unique connected and simply connected Lie group $\cal G$ thus leads to a Poisson-Lie group. It turns out that, as a manifold, 
			${\cal G} \cong \R^3$, and its Poisson structure is given by \eqref{sl2nonlin}  for the choice \begin{equation}
				U_{\mathrm{grp}}(z) := \frac{1-e^{-2\eta z}}{2\eta} \quad , \qquad V_{\mathrm{grp}}(z) := \eta. \label{grp}
			\end{equation}
			We now see that $\eta$ can be considered as a deformation parameter here: in the limit of sending $\eta$ to zero, we get back the linear brackets \eqref{sl2brackets}. For an appropriate choice of integration constants, \eqref{C} yields 
			\begin{equation}
				\mathcal C_{\mathrm{grp}} = xy e^{\eta z} + \frac{\cosh(\eta z)-1}{\eta^2}
			\end{equation}
			which also reduces to the $\sl(2,\mathbb{R})$-Casimir \eqref{Clin} in the limit. This example provides a particularly interesting 1-parameter family of deformations of the linear Poisson structure on $\sl(2,\mathbb{R})^*$ within the infinite-dimensional deformation space governed by the two functions $U$ and $V$ as it corresponds to a Poisson-Lie group. Three symplectic leaves of this Poisson-Lie group are depicted in Figure~\ref{figeta}. 
			\begin{figure}[H]
				\subfloat[a][]
				\centering
				\scalebox{0.1}[0.1]{\includegraphics{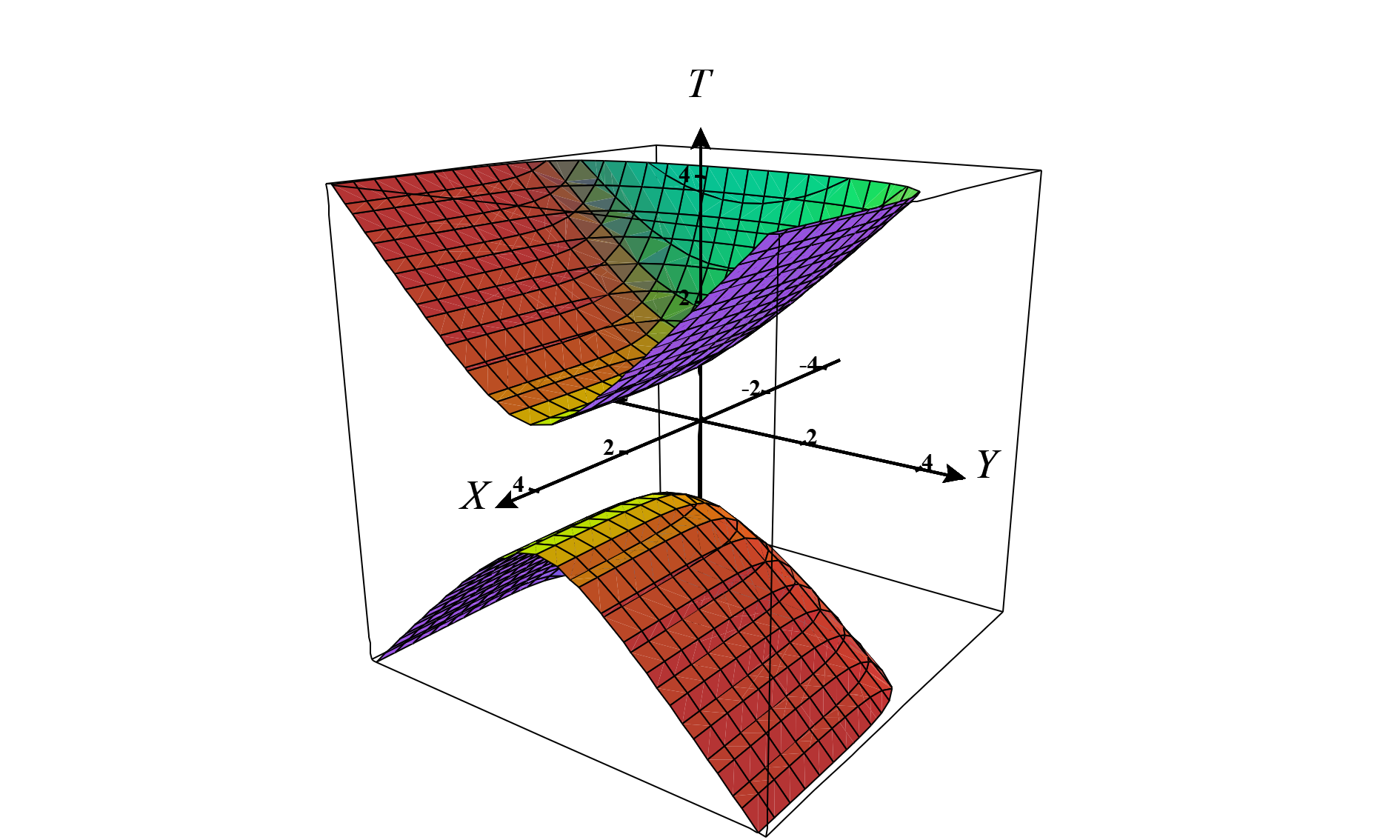}} 
				\hspace{-2.3cm}
				\subfloat[b][]
				{\includegraphics[scale=0.1]{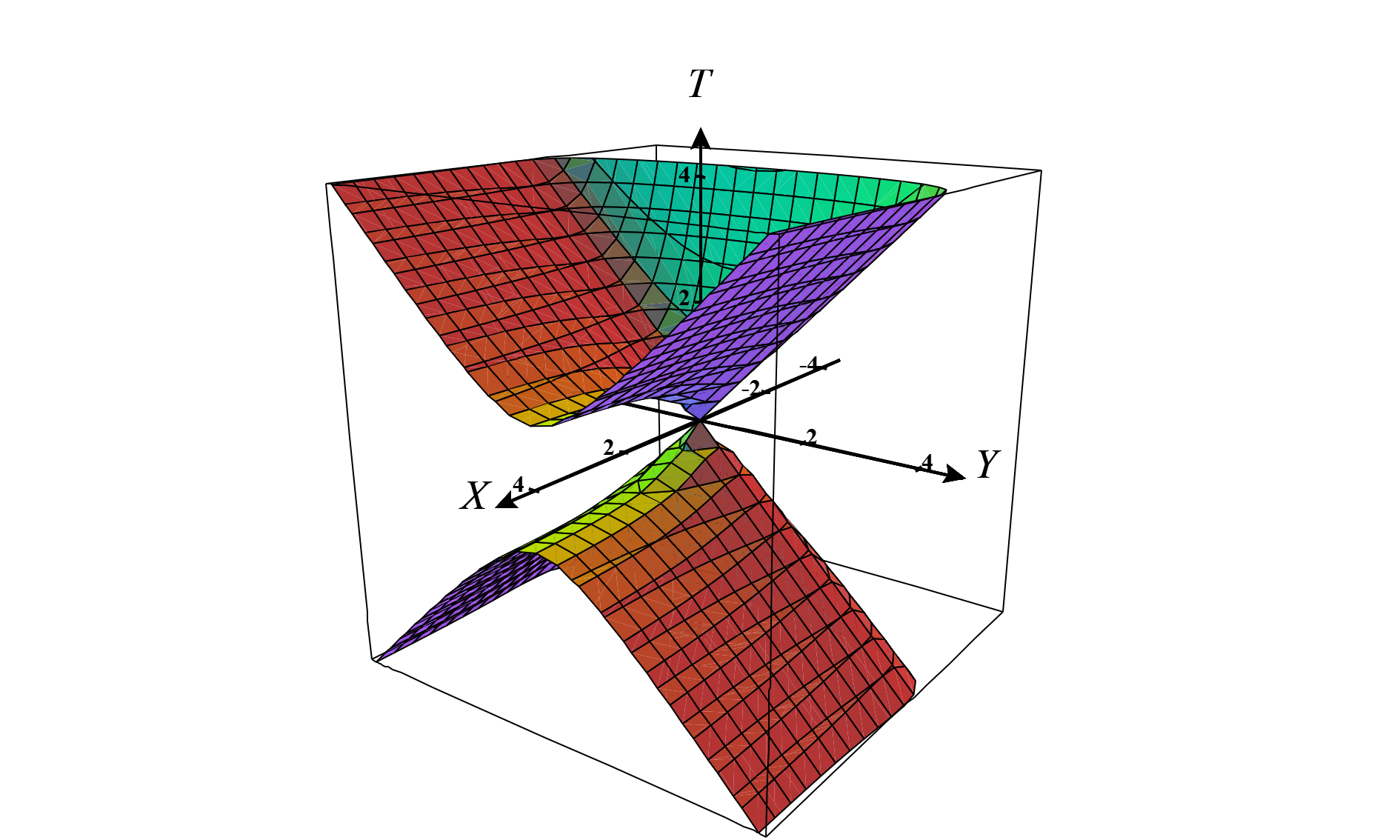}} 
				\hspace{-2.3cm}
				\subfloat[c][]
				{\includegraphics[scale=0.1]{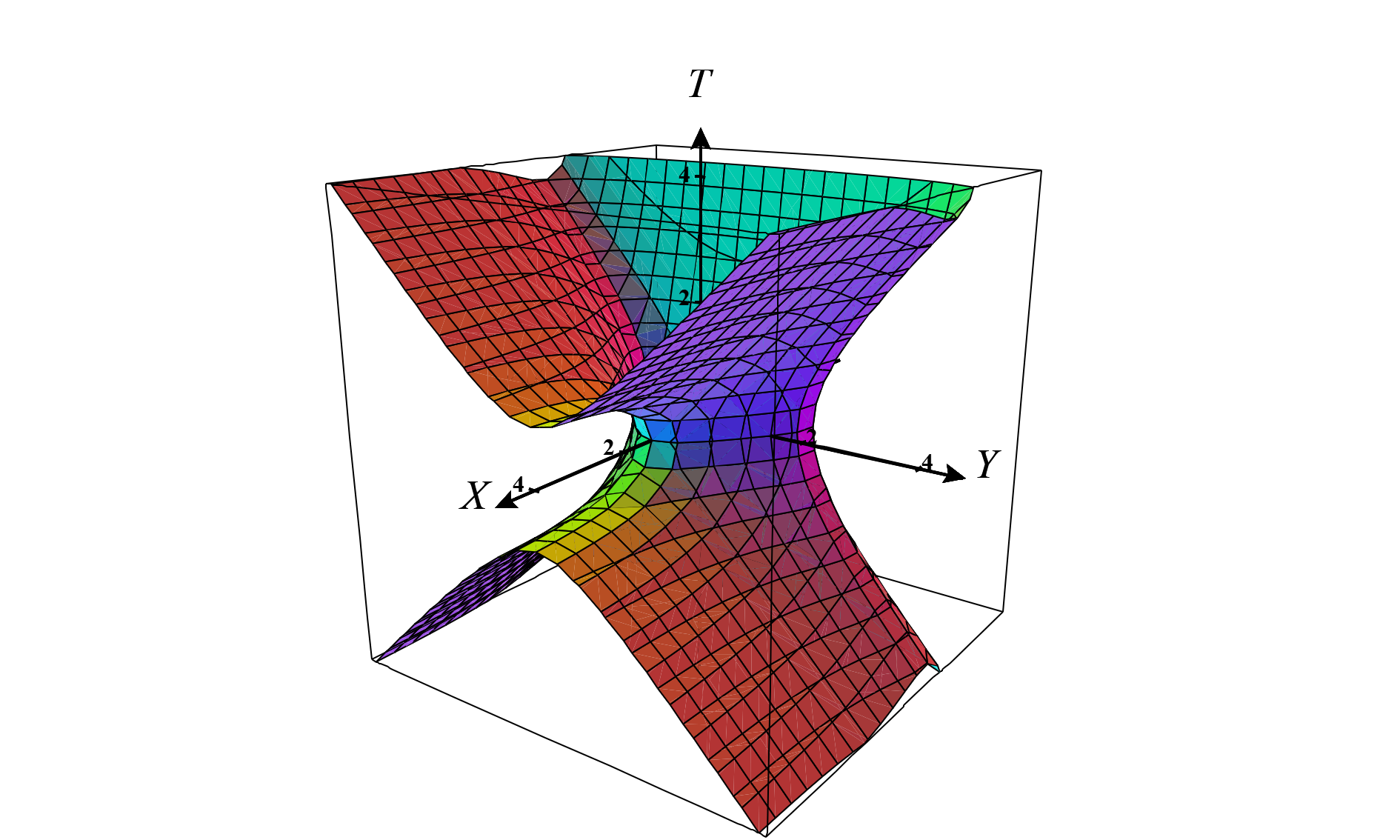}}
				\caption{Symplectic leaves on Poisson-Lie group for the deformation parameter $\eta =1$:  (A) for $c=-1$, (B) for $c=0$, and (C) for $c=1$. For $\eta \to 0$, they more and more approach the leaves shown in Fig.~\!\ref{fig1}.} \label{figeta}
			\end{figure}
		\end{exmp}

		\section{Topological nature of certain symplectic leaves}\label{sec:5}
		In this section, we discuss the topological nature of the symplectic leaves for the class of Poisson structures (\ref{sl2nonlin}). Specifically, we investigate the topological characteristics of the symplectic leaves for our two chosen examples. To better understand these characteristics for various choices of $U$ and $c$, while keeping $V$ at zero for simplicity, it is helpful to consider the following function:
		\begin{rem}
			Consider the Casimir function \ref{C} of the bracket \ref{sl2nonlin}. The number and type of zeros of the function 
			\begin{equation} \label{h}
				h_c(z) := Q(z) - c \,,
			\end{equation}
			determine the topology of the symplectic leaves.
		\end{rem}	
		\medskip  \noindent 
		\begin{exmp}
			
			Let us consider the Poisson bracket in the Example \ref{sl2leaves}. We depict the corresponding function $h_c$ for the three values $c=-1$, $c=0$, and $c=+1$ in Fig.~\!\ref{fig:hlin}A.  
			\begin{figure}[h]
				\subfloat[]
				\centering
				\scalebox{0.2}[0.2]{\includegraphics{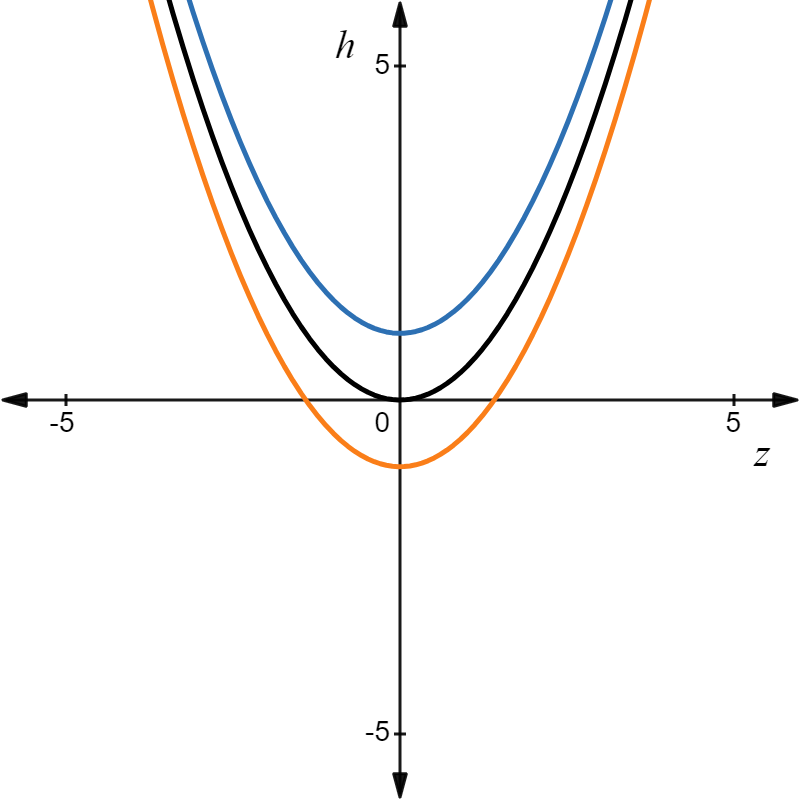}}
				\hspace{2cm}
				\subfloat[][]
				{\includegraphics[scale=0.2]{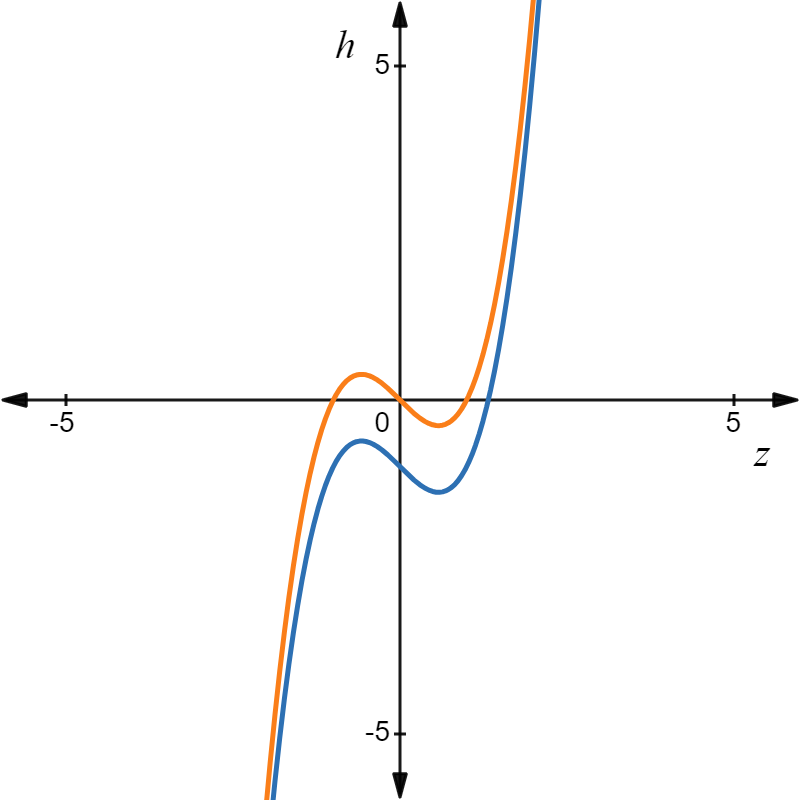}}
				\caption{(A) Function $h_c$ for the linear Poisson structure on $\sl^*_2$, orange color for $h_{-1}$, black color for $h_{0}$ and blue color for $h_{1}$. (B) Function $h_c$ for the quadratic Poisson structure, orange color for $h_{0}$ and blue color for $h_{1}$. } \label{fig:hlin}
			\end{figure}
			We observe that $h_{-1}$ has no zeros, and the corresponding leaf is topologically trivial, as shown in Fig.~\ref{fig1}A. In contrast, $h_{1}$ has two simple roots, and the leaf $S$ for $c=1$ is topologically a cylinder. This qualitative behavior of $h_c$ and the corresponding leaves remains consistent as long as the sign of $c$ does not change.
			However, for the special value $c=0$, we encounter a unique situation: the function has one multiple root, which corresponds to a singular leaf coexisting with two regular leaves.
			
			In fact, the latter observation is not a coincidence. Recall that all singular leaves lie on the $z$- or $X$-axis at points where the function $U$ vanishes, as shown in equation \eqref{Lsing}. From the definition in \eqref{h}, we can deduce that for every value of $z$ where $U$ vanishes, $h_c'$ also vanishes. Consequently, singular leaves appear for values of $c$ where $h_c$ has a multiple zero.
		\end{exmp}	
		\begin{exmp}
			Let us now examine the second example, Example \ref{ex:quad}, and depict the graphs of $h_1$ and $h_0$—see Fig.~\ref{fig:hlin}B.
			For $c=1$, the function has one simple zero, and the corresponding leaf is topologically equivalent to a plane.
			For $c=0$, the function has three simple zeros. In this case, the corresponding leaf $S$ has a fundamental group $\pi_1(S) = \mathbb{F}_2$, which is the free group with two generators. The general situation can be summarized as follows:
			
			In general Classification, if the function $h_c$ has $n \geq 1$ simple zeros, then:
			\begin{itemize}
				\item The symplectic leaf $S$ is a Riemann surface of genus $\left[\tfrac{n+1}{2}\right]-1$, where $[\cdot]$ denotes the integer part.
				\item If $n$ is odd, $S$ has one puncture.
				\item If $n$ is even, $S$ has two punctures (boundary components).
			\end{itemize}
			Specifically, for instance: 
			\begin{itemize}
				\item For $n=3$: $S$ is a punctured torus, consistent with the previously mentioned fundamental group $\mathbb{F}_2$.
				\item For $n=4$: $S$ is a torus with two punctures (see Fig.~\ref{fig:fourzeros}).
				\item For $n=5$: $S$ is a genus two surface with one puncture (see Fig.~\ref{fig:fivezeros}).
			\end{itemize}
			
			\begin{figure}[h]
				\subfloat[a][]
				\centering
				\scalebox{0.1}[0.1]{\includegraphics{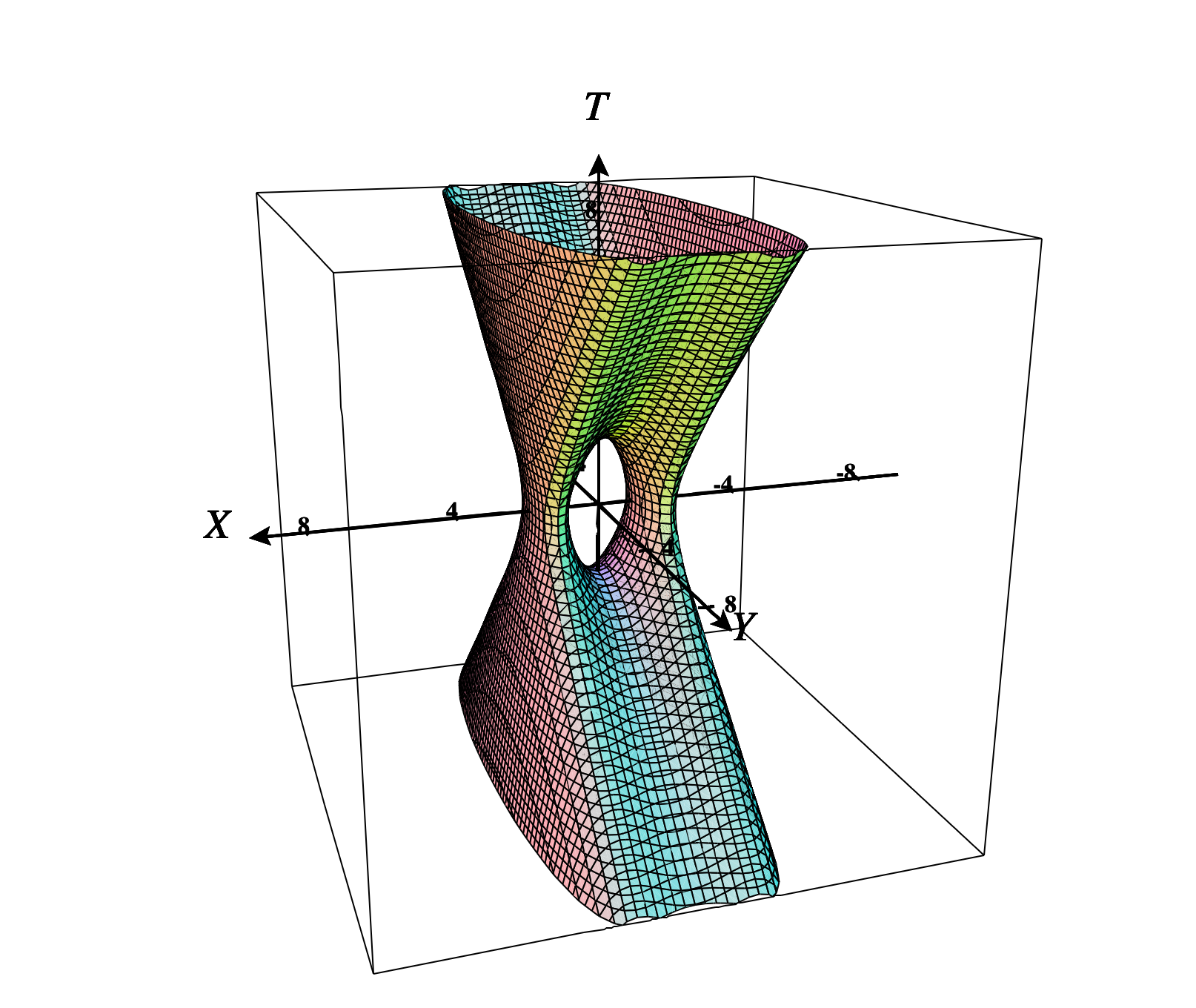}} 
				\subfloat[c][]
				{\includegraphics[scale=0.1]{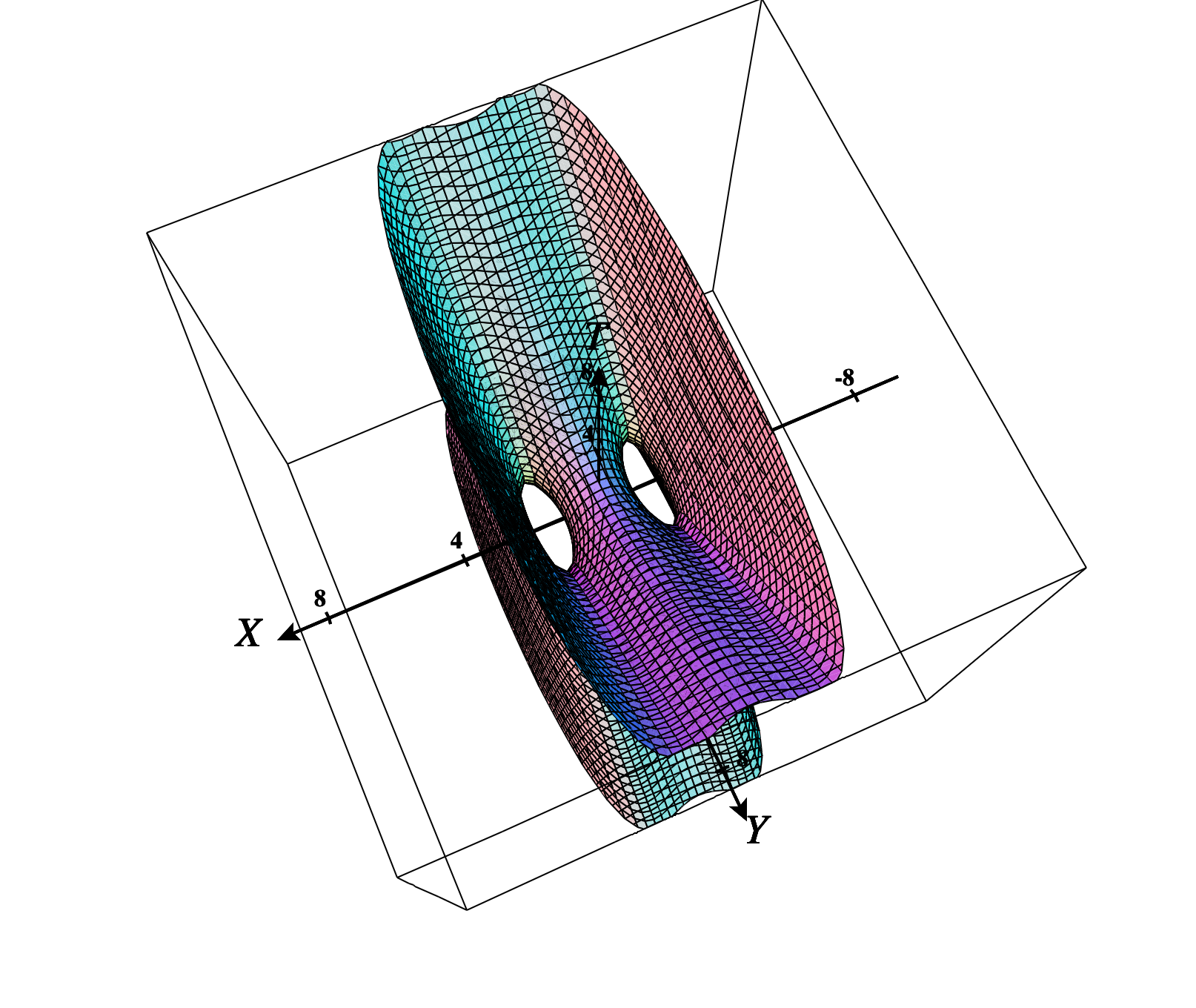}}
				\caption{The symplectic leaf when  $h(z)=2(z-2)(z-1)(z+1)(z+2)$, in (A) from the side and in (B) from above. Topologically it is a genus one surface with two punctures or boundary components.} \label{fig:fourzeros}
			\end{figure} 
			\begin{figure}[h]
				\subfloat[a][]
				\centering
				\scalebox{0.1}[0.1]{\includegraphics{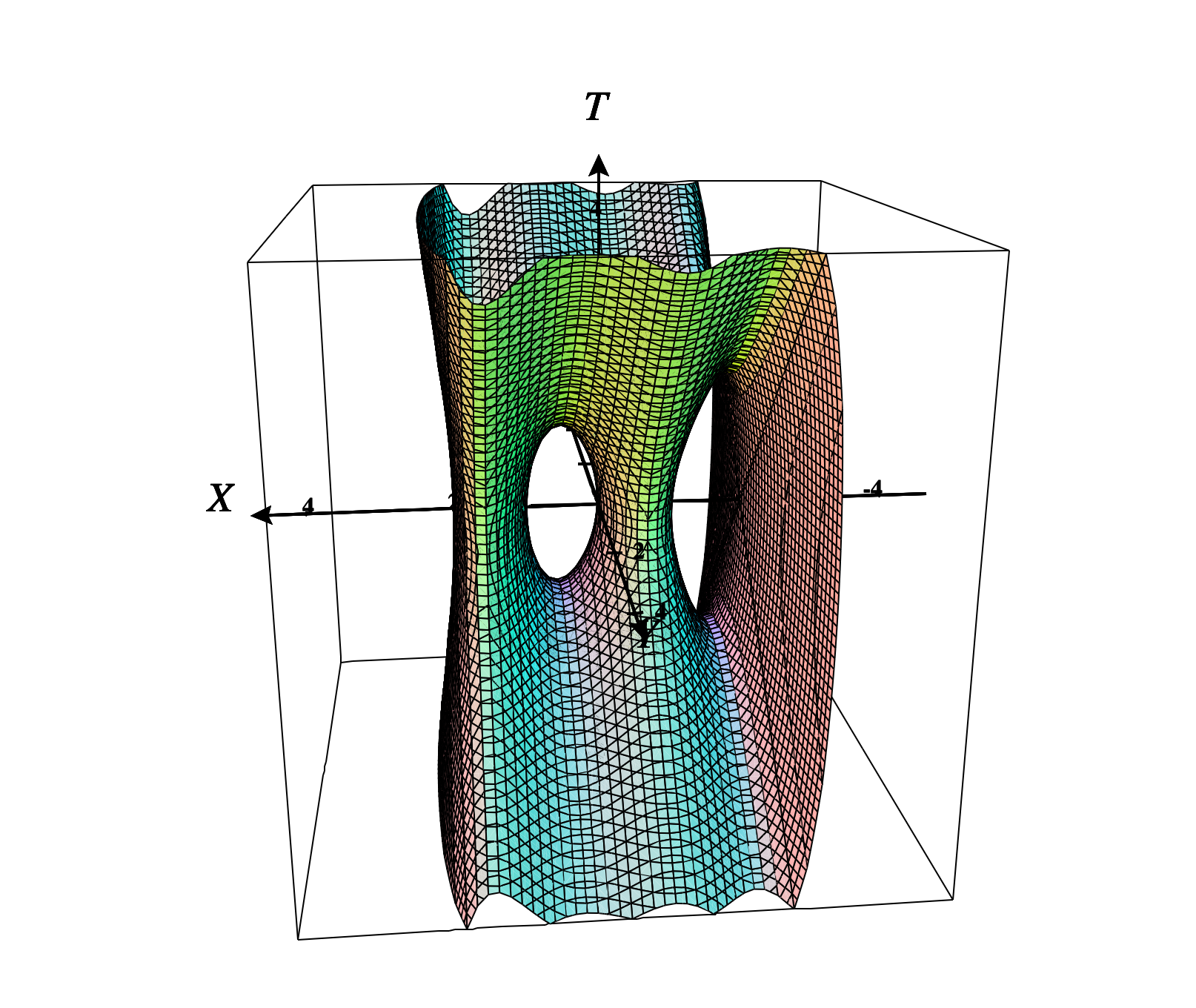}} 
				\subfloat[c][]
				{\includegraphics[scale=0.1]{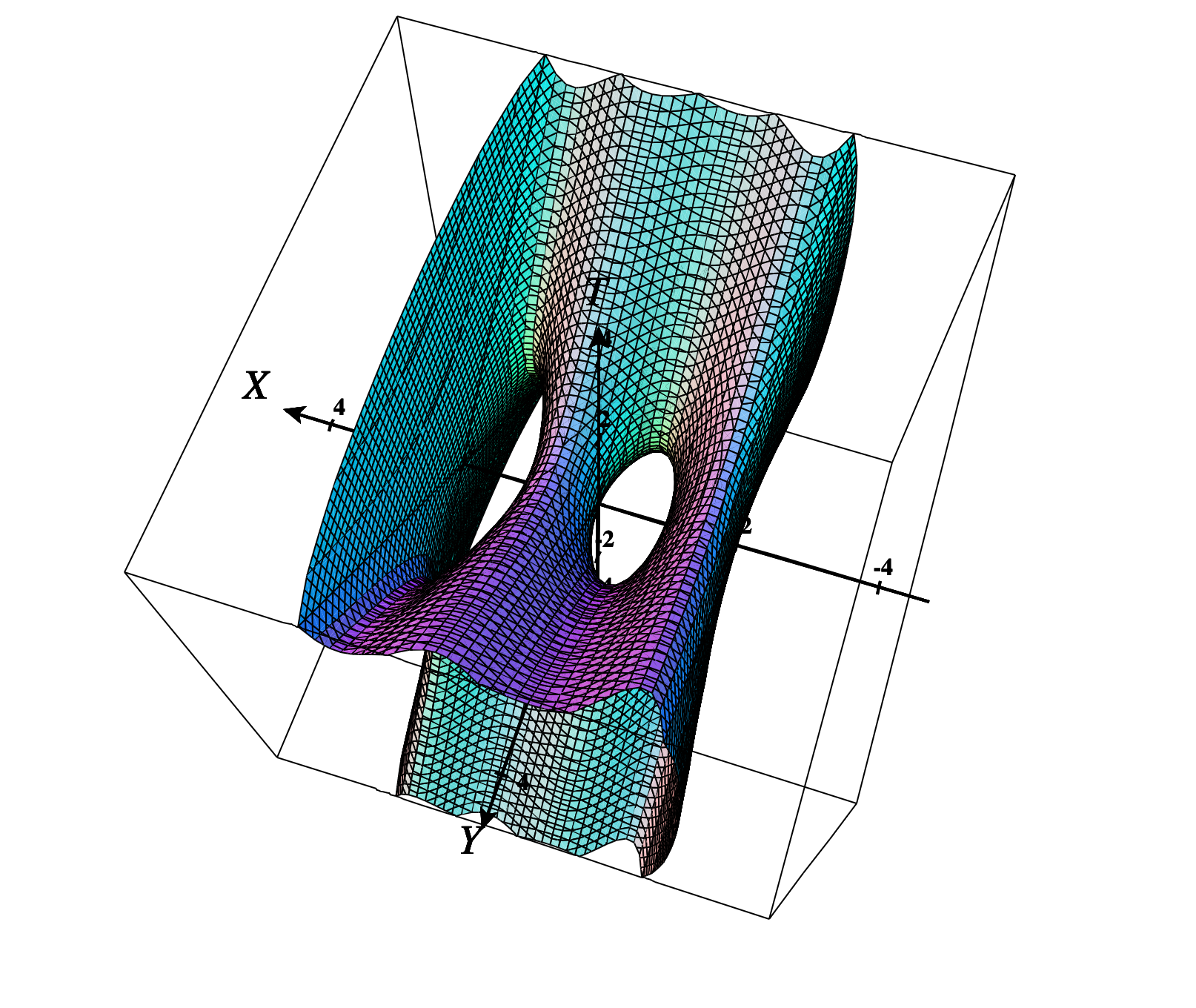}}
				\caption{The symplectic leaf when  $h(z)=2(z-2)(z-1)z(z+1)(z+2)$, in (A) from the side and in (B) from above. Topologically it is a punctured genus two surface.} \label{fig:fivezeros}
			\end{figure} 
			
			\noindent These leaves can be obtained using specific polynomials, for instance:
			\begin{itemize}
				\item For $n=4$: $Q(z)=2(z-2)(z-1)(z+1)(z+2)$
				\item For $n=5$: $Q(z)=2(z-2)(z-1)z(z+1)(z+2)$
			\end{itemize}
			In both cases, we set $c=0$ which yields $h_0=Q$.
			By reverse engineering, the Poisson brackets yielding such leaves can be obtained from the brackets \eqref{sl2nonlin} when choosing $U=Q'$ and $V=0$. For completeness, we mention two special cases:
			\begin{itemize}
				\item If $h_c$ has no zeros: One obtains two planar symplectic leaves.
				\item If $h_c$ has zeros of some multiplicity: The leaf contains singular components.
			\end{itemize}
			This classification demonstrates how the topology of symplectic leaves is intricately linked to the properties of the defining function $h_c$.
		\end{exmp}
		Evidently, the freedom in choosing the function $U$ in the brackets \eqref{sl2nonlin} permits one to obtain much more intricate symplectic leaves than in the linear case shown in Figure \ref{fig1}. This flexibility allows for the creation of a wide variety of topological structures leading to the following conjecture:
		\begin{Conjecture}
			For any integer $k \in \mathbb{N}$, by an appropriate choice of $U$, while $V=0$, we can create a sample of symplectic leaves which topologically are Riemann surfaces of genus $k$—with optionally one or two boundary components.
		\end{Conjecture}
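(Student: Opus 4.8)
The plan is to combine the topological classification of leaves recorded in this section with an explicit algebraic construction of the defining data $U$. Throughout we keep $V=0$, so that by Lemma~\ref{lemC} (with $P=0$) the Casimir is $\mathcal{C}(x,y,z)=xy+Q(z)$ with $Q'=U$, and the leaf at level $c$ is the affine surface $S_c=\{xy+h_c(z)=0\}$ with $h_c=Q-c$ as in \eqref{h}. The classification recalled above asserts that if $h_c$ has exactly $n\ge 1$ simple real zeros, then $S_c$ is a connected Riemann surface of genus $\big[\tfrac{n+1}{2}\big]-1$ carrying one puncture when $n$ is odd and two when $n$ is even. Granting this, the conjecture reduces to producing, for each prescribed $k$ and each prescribed number of boundary components, a function $U$ whose associated $h_c$ has the correct number of simple zeros.

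The construction itself is immediate. Fix $k\in\NN$. To realize genus $k$ with one boundary component I would set $n=2k+1$ and note $\big[\tfrac{n+1}{2}\big]-1=k$; to realize genus $k$ with two boundary components I would instead take $n=2k+2$, which again gives $\big[\tfrac{n+1}{2}\big]-1=k$. In either case choose $n$ distinct real numbers $a_1<\dots<a_n$, put $Q(z)=\prod_{i=1}^{n}(z-a_i)$, and define the Poisson structure \eqref{sl2nonlin} by $U:=Q'$ and $V:=0$. Taking $c=0$ gives $h_0=Q$, which has precisely the $n$ simple zeros $a_1,\dots,a_n$. Since the roots are simple, $Q$ and $U=Q'$ share no common zero, so by \eqref{Lsing} no singular leaf meets $S_0$; hence $S_0$ is a regular leaf to which the classification applies, and it has the desired topology.

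What remains—and what I regard as the genuine content behind the conjecture—is to make the classification itself rigorous, since in this section it is read off from pictures and examples rather than proved. Here I would proceed as follows. The height function $z\colon S_c\to\R$ has differential vanishing only at the points $(0,0,a_i)$, where near each such point $S_c$ is the graph $z=a_i+xy/h_c'(a_i)$ with $h_c'(a_i)\ne 0$; thus $z|_{S_c}$ is Morse with exactly the $n$ saddle critical points sitting over the simple zeros. To compute the Euler characteristic without being obstructed by the noncompact fibres, I would use the compactly supported Euler characteristic and its additivity across the stratification of the $z$-line into the $n$ critical points and the $n+1$ open intervals: a generic fibre is two disjoint lines ($\chi_c=-2$), a critical fibre is a pair of crossing lines ($\chi_c=-3$), and multiplying by $\chi_c$ of the base strata yields $\chi(S_c)=\chi_c(S_c)=2(n+1)-3n=2-n$.

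The main obstacle is the determination of the number of ends (punctures) together with the connectedness of $S_c$, because the ends of these hyperbola-bundle surfaces are transverse to the $z$-fibration—already for the one-sheeted hyperboloid the two ends sweep out all values of $z$—so they cannot be read off fibrewise. I would resolve this by analyzing $S_c$ near infinity directly, for instance through a fibrewise compactification that adjoins the points at infinity of each hyperbola and inspects the resulting boundary circles, tracking how the sign pattern of $h_c$ (which alternates across simple zeros and is governed as $z\to\pm\infty$ by the parity of $\deg Q=n$) forces exactly one end when $n$ is odd and two when $n$ is even, and forces connectedness for $n\ge 1$. Once $b=1$ or $b=2$ is established together with connectedness, the genus follows from $\chi(S_c)=2-2g-b=2-n$, giving $g=\big[\tfrac{n+1}{2}\big]-1$ and completing both the classification and, via the construction above, the conjecture.
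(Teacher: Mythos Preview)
The paper does not prove this statement—it is explicitly labeled a \emph{Conjecture}, and the surrounding text offers only examples and the heuristic ``general Classification'' (genus $[\tfrac{n+1}{2}]-1$ with one or two punctures according to the parity of $n$), which is itself read off from pictures rather than established. So there is no paper proof to compare against; your proposal is an attempt to supply what the paper omits.

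Your reduction is correct and your construction is the obvious one: once the classification is granted, taking $Q(z)=\prod_{i=1}^{n}(z-a_i)$ with distinct real $a_i$, $U=Q'$, $V=0$, $c=0$ produces $h_0$ with exactly $n$ simple zeros, and your observation that $Q$ and $Q'$ share no root ensures the leaf avoids $\mathcal L_{sing}$. This part is complete.

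Your attack on the classification itself is the substantive contribution, and the Euler-characteristic half is sound: $z|_{S_c}$ is Morse with $n$ saddles, the compactly supported $\chi_c$ is additive over the stratification of the $z$-line, and your fibre values $\chi_c(\text{hyperbola})=-2$, $\chi_c(\text{cross})=-3$ give $\chi_c(S_c)=2(n+1)-3n=2-n$ as claimed. You are also right that the genuine difficulty is pinning down connectedness and the number of ends, since the ends run transverse to the $z$-fibration. Your proposed fibrewise compactification is a workable route; an alternative you might find cleaner is to note that away from the $n$ critical fibres the map $(x,y,z)\mapsto(x,z)$ (resp.\ $(y,z)$) is a chart, so $S_c$ is assembled from $2(n+1)$ strips $\R\times(\text{open interval})$ glued four-at-a-time at each saddle according to the sign change of $h_c$, and the end count then reduces to a combinatorial check on how these strip-ends chain together at $|z|\to\infty$ and along each interval. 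Either way, this step is sketched rather than executed in your write-up, and you flag it honestly; completing it would turn the paper's conjecture into a theorem.
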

		\noindent This underscores the rich topological diversity achievable through careful selection of the function $U$ in the Poisson bracket structure. It demonstrates that the complexity of symplectic leaves can be tailored to produce Riemann surfaces of arbitrary genus, with the additional flexibility of including boundary components as desired.
		
		\section{Degeneracy structure of the metriplectic tensor $\M$: red zones
		} \label{sec:6}
		\medskip
		In this section, we consider the class of Poisson structures given by (\ref{sl2nonlin}) and choose a metric on the ambient space in which the leaves are embedded, namely the Poisson manifold $\mathbb{R}^3$. We then discuss the degeneracy structure of the metriplectic tensor. For the linear $\sl_2^*$ brackets, a natural candidate for this metric is the Killing form of the Lie algebra $\sl_2$, denoted as $\kappa$. The Killing form $\kappa$ has the following properties:
		It is an indefinite metric and its indefinite signature corresponds to the non-compact nature of the Lie group $\SL(2,\R)$. After a trivial rescaling, $g := \tfrac{1}{2}\kappa$, the metric $g$ is defined as
		\begin{equation} \label{g}
			g = 2 \, \d x   \d y +   \d z^2 = -\d T^2 + \d X ^2+   \d Y^2 \, , \end{equation}
		written in both of the coordinate systems used before. Recall that we omit writing the symmetrized tensor product; correspondingly, $\d z^2$ stands for  $\d z \otimes  \d z$. 
		
		\smallskip
		As stated in the introduction, our approach to selecting a metric on the given Poisson manifold is guided by simplicity rather than sophistication. We avoid complex options, such as metrics that would adapt to the symplectic leaves of the given Poisson structure. Consequently, we maintain the metric \eqref{g} unchanged while modifying the Poisson brackets. This approach allows us to focus on the effects of changing the Poisson structure while maintaining a consistent reference frame for our geometric analysis.
		
		\smallskip\noindent 
		The metric \eqref{g} identifies the given $\R^3$ with a 2+1 dimensional Minkowski space, where $T$ serves as the time coordinate, and $X$ and $Y$ are spatial coordinates. Determining where the induced metric $g^S$ on a symplectic leaf becomes degenerate can be challenging and may require a case-by-case analysis. This process can be complex for several reasons:
		\begin{itemize}
			\item An atlas may be needed to cover the leaf, even for two-dimensional leaves.
			\item The degeneracy of $g^S$ in a particular coordinate system might be due to:
			
			a) An intrinsic degeneracy of the metric on the leaf,
			
			b) A poor choice of coordinates (like when writing the standard metric on $\R^2$ in polar coordinates).
		\end{itemize}
		Fortunately, a comprehensive analysis of degeneracy for every possible coordinate system is not necessary. This simplification allows for a more efficient approach to studying the geometric properties of symplectic leaves without the need for exhaustive coordinate-dependent calculations.
		\smallskip 
		
		In the following, we discuss the degeneracy structure of the metriplectic tensor for the class of Poisson structures given in equation (\ref{sl2nonlin}) and the metric defined in equation (\ref{g}). By Definition \ref{metricplectic}, the matrix corresponding to the metriplectic tensor $\M$ in the basis $\partial_x \otimes \partial_x$, $\partial_x \otimes \partial_y$, $\ldots$, $\partial_z \otimes \partial_z$ takes the form
		\begin{equation} \label{Mmatrix}
			\left[\M\right]  = \begin{pmatrix}
				x^2 & -xy - W^2 & xW \\ -xy - W^2 & y^2 & yW \\ xW& yW &-2xy
			\end{pmatrix} 
		\end{equation}
		where we introduced the abbreviation
		\begin{equation}\label{W}
			W =  U(z)+ xy \,  V(z) \, . 
		\end{equation}
		\begin{prop} \label{PropR}
			Let $(\R^3, g, \Pi)$ be one of the pseudo-Riemmanian Poisson manifolds considered in this section. 
			The points $m = (x,y,z) \in \R^3$ for which the function  
			\begin{equation} \label{f}
				f(x,y,z) := 2xy + W^2(x,y,z) \, 
			\end{equation}
			vanishes belong exclusively to one of the following two classes:
			\begin{itemize}
				\item  $m$ is an $\M$-singular point 
				\item  $\{m\}$ is a singular  symplectic leaf.
			\end{itemize}   
		\end{prop}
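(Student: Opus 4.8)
The plan is to reduce the statement to a fibrewise computation in linear algebra, exploiting the fact that the Poisson bivector of \eqref{sl2nonlin} is an antisymmetric $3\times 3$ matrix, whose rank is therefore either $0$ or $2$. The rank is $0$ exactly when all brackets vanish, i.e.\ when $x=y=0$ and $W=U(z)=0$; these are precisely the point-leaves of $\mathcal L_{sing}$ in \eqref{Lsing}. At such a point $f=2xy+W^2$ vanishes automatically, and since $\Im\srp=\Im\srm=\{0\}$ the point is $\M$-regular and forms a singular symplectic leaf. This settles the second class and shows the two classes are disjoint, because an $\M$-singular point necessarily has $\Im\srp$ two-dimensional and hence lies on a regular leaf. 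It remains to analyse the regular points ($\on{rank}\srp=2$) and to show that there $f=0$ is equivalent to $\M$-singularity.

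First I would describe the leaf tangent plane $\Im\srp=\T_xS$ explicitly. A direct check (or the identity $\d\mathcal C=e^{P}\nu$ obtained from \eqref{C}) shows that the covector
\[
\nu:=y\,\d x+x\,\d y+W\,\d z
\]
spans $\Ker\srp$. By antisymmetry of $\Pi$ one has $\Ker\srp=(\Im\srp)^{\circ}$, hence $\Im\srp=(\Ker\srp)^{\circ}=\{v:\nu(v)=0\}$. Next I would compute the $g$-normal to this plane. Since $g$ is non-degenerate, the $g$-orthogonal complement of $\Im\srp$ is spanned by $n:=g^{\sharp}(\nu)$, where $g^{\sharp}$ is the inverse metric; with $g=2\,\d x\,\d y+\d z^{2}$ one finds
\[
n=x\,\pa_x+y\,\pa_y+W\,\pa_z ,
\]
which is nonzero at every regular point (it vanishes only on $\mathcal L_{sing}$).

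The crux is then a single scalar identity. The induced tensor $g_{\ind}^S=\iota^{*}g$ on $\Im\srp$ is degenerate iff its radical is nontrivial, and this radical equals $\mathrm{span}(n)\cap\Im\srp$; as $n\neq 0$ at regular points, it is nontrivial iff $n\in\Im\srp$, i.e.\ iff $\nu(n)=0$. Evaluating,
\[
\nu(n)=y\cdot x+x\cdot y+W\cdot W=2xy+W^{2}=f .
\]
Thus at a regular point $f=0$ is equivalent to the degeneracy of $g_{\ind}^S$, which by the corresponding Remark in Section~\ref{sec:3} is equivalent to $m$ being $\M$-singular. Together with the singular-point case this proves that every zero of $f$ lies in exactly one of the two asserted classes.

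I expect the main difficulty to be conceptual bookkeeping rather than calculation: one must correctly identify $\Im\srp$ with $\Ker\nu$ via the antisymmetry of $\Pi$, and verify that the $g$-normal $n=g^{\sharp}\nu$ is genuinely nonzero off $\mathcal L_{sing}$, so that the degeneracy of $g_{\ind}^S$ collapses to the single condition $\nu(n)=0$. Once the normal direction $n$ is identified, the equality $\nu(n)=f$ is immediate and explains the precise form of the function $f$ governing the red zone.
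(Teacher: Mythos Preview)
Your argument is correct and takes a genuinely different route from the paper. The paper works directly with the $3\times 3$ matrix $[\M]$ in \eqref{Mmatrix}: it first checks that $\M$ vanishes precisely on $\mathcal L_{sing}$, and then shows by elementary row operations that at all other points one has $\mathrm{rk}\,\M_m=1$ if and only if $f(m)=0$ (substituting $-(xy+W^2)\to xy$ and $-2xy\to W^2$ makes all rows proportional to $(x\; y\; W)$, while a single row subtraction produces $f$ as a pivot entry for the converse). You instead identify the annihilator $\Ker\srp=\mathrm{span}(\nu)$ with $\nu=y\,\d x+x\,\d y+W\,\d z$, compute the metric normal $n=g^{\sharp}\nu=x\,\pa_x+y\,\pa_y+W\,\pa_z$, and observe that the radical of $g_{\ind}^S$ is $\mathrm{span}(n)\cap\Im\srp$, which is nontrivial exactly when $\nu(n)=2xy+W^{2}=f$ vanishes; the link to $\M$-singularity then comes from the Remark in Section~\ref{sec:3}. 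Your approach is more geometric and explains \emph{why} the function $f$ has this specific form (it is literally $g(n,n)$, the squared length of the leaf normal), whereas the paper's matrix computation is more self-contained, not relying on the equivalence between $\M$-singularity and degeneracy of $g_{\ind}^S$, and yields the rank of $\M$ explicitly.
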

		\begin{proof}
			We always have $\Im \sharp_\M \subseteq \Im \Pi^{\sharp}$. Consequently, whenever $\Pi_m$ vanishes—i.e., at the points $m\in {\cal L}_{sing}$—$\M_m$ also vanishes. There are no other points where $\M$ vanishes, as inspection of \eqref{Mmatrix} demonstrates. Since \eqref{f} vanishes at these points as well, we have proven the second item in the proposition.
			
			\noindent 
			It remains to show that all other points $m\in \R^3$ where $f$ vanishes are those where $\mathrm{rk} \, \M_m = 1$. First, let us prove that this condition is sufficient: By replacing $-(xy +W^2)$ with $xy$ in the first two rows and $-2xy$ with $W^2$ in the third row, we observe that each of the three rows is proportional to $(x \quad y \quad W)$. Thus, at points where \eqref{f} vanishes, the rank of $\M$ is one. 
			
			\medskip\noindent  	To show that $f=0$ is also necessary for $\mathrm{rk} \, \M_m = 1$, first note that if both $x$ and $y$ vanish, the rank of $\M$ cannot be odd. Now, suppose $x \neq 0$: Subtract the first row multiplied by $\tfrac{1}{x}W$ from the third row. The new third row becomes $(0 \quad * \quad f)$. Therefore, $f=0$ is necessary for $\M$ to have rank one in this case. A similar argument holds for $y \neq 0$. Alternatively, we can note that $\M$ remains unchanged under the diffeomorphism $(x,y,z) \mapsto (y,x,z)$, leading to the same conclusion.
		\end{proof}
		
		The pseudo-Riemannian Poisson  manifolds $(M,\Pi,g)$ considered in this section are defined by $M=\R^3$, the pseudo-Riemannian metric \eqref{g}, and the bivector (cf.~\eqref{sl2nonlin}) 
		\begin{equation} \label{Pi}
			\Pi = W(x,y,z) \, \partial_x \wedge \partial_y - x \, \partial_x \wedge \partial_z + y \, \partial_y \wedge \partial_z \, , 
		\end{equation}
		where the function $W$ is given in  \eqref{W} and $U$ and $V$ can be chosen arbitrarily.  The Theorem \ref{gradient th}, is applicable whenever we exclude $\M$-singular points, while singular symplectic leaves—all pointlike in our case, see \eqref{Lsing}—are admissible. 
		Thus, in view of Prop.~\ref{PropR}, we have the following definition:
		\begin{rem}     
			The set is given by
			\begin{equation} \label{R}		\mathcal R := \{ (x,y,z) \in \R^3 \vert f(x,y,z)=0\} \backslash {\cal L}_{sing}\, ,
			\end{equation}
			where ${\cal L}_{sing}$ defined in \eqref{Lsing}, characterizes the red zone of the given triple $(M,\Pi,g)$. Notably, $\mathcal R$ is a smooth manifold. 
		\end{rem}
		\noindent	More explicitly, the function $f$ is given by  
		\begin{equation} \label{xy}
			f(x,y,z)= U^2 + 2(1+UV) \, xy + V^2 \, (xy)^2  \, ,
		\end{equation} 
		where $U$ and $V$ depend on $z$ only. 
		
		\medskip Let us illustrate this with the three examples discussed in Section \ref{sec:4}. 
		\begin{exmp}
			For the case $(\sl_2^*,\kappa,\Pi_{\sl_2^*})$ of Example \ref{sl2leaves}, where $U(z)=z$ and $V=0$ in \eqref{Pi}, we see that $f(x,y,z)=2 \mathcal C(x,y,z)$, cf.~\eqref{Clin}. Therefore, the red zone $\mathcal R$, illustrated in Fig.~\!\ref{fig:redzones}A, corresponds precisely to the two regular symplectic leaves obtained from $\mathcal C_{lin}=0$ when the origin is excluded, see Fig.~\!\ref{fig1}B. 
			Consequently, both of these leaves are classified as ``bad" symplectic leaves.
			\begin{figure}[H]
				\subfloat[a][]
				\centering
				\scalebox{0.1}[0.1]{\includegraphics{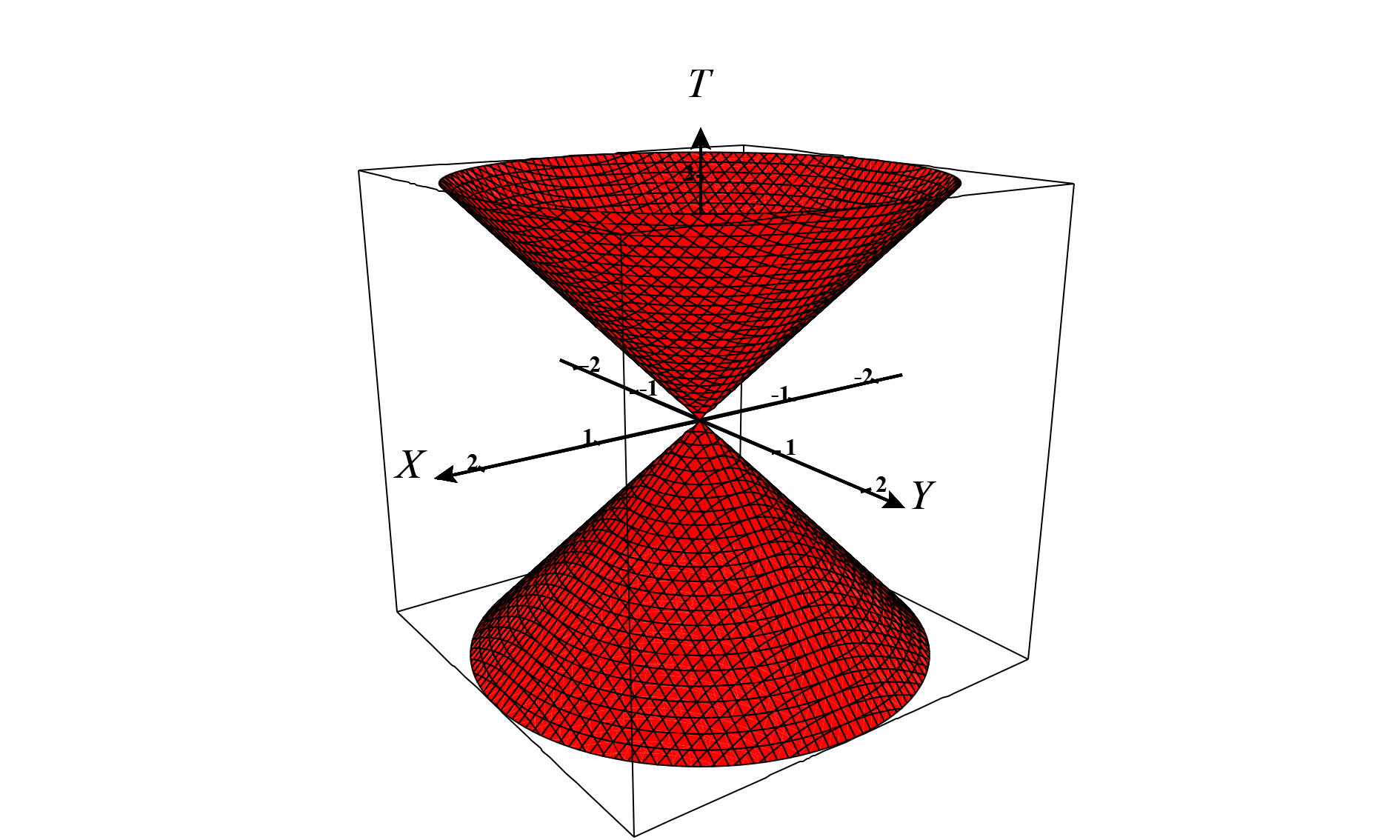}} 
				\hspace{-2.3cm}
				\subfloat[b][]
				{\includegraphics[scale=0.1]{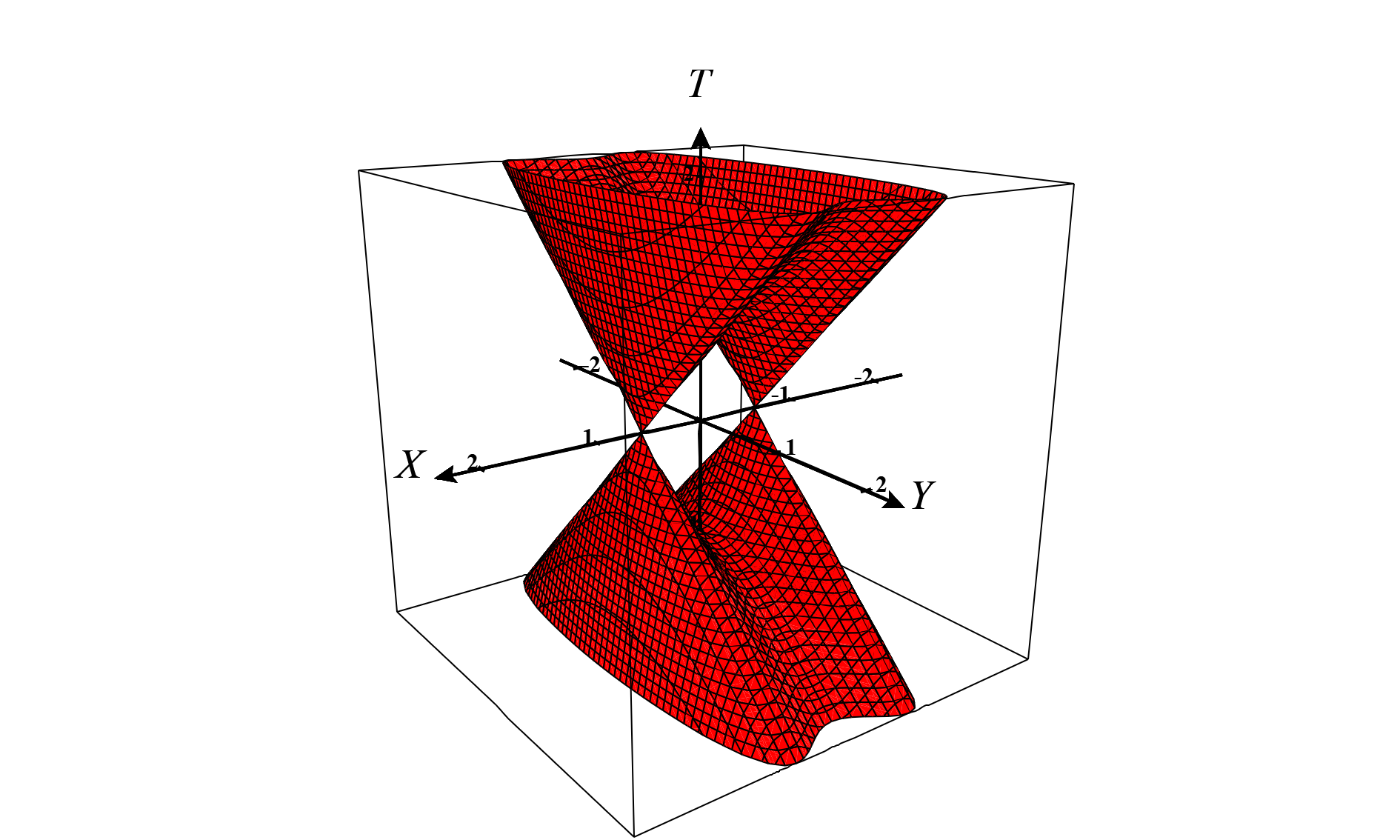}} 
				\hspace{-2.3cm}
				\subfloat[c][]
				{\includegraphics[scale=0.1]{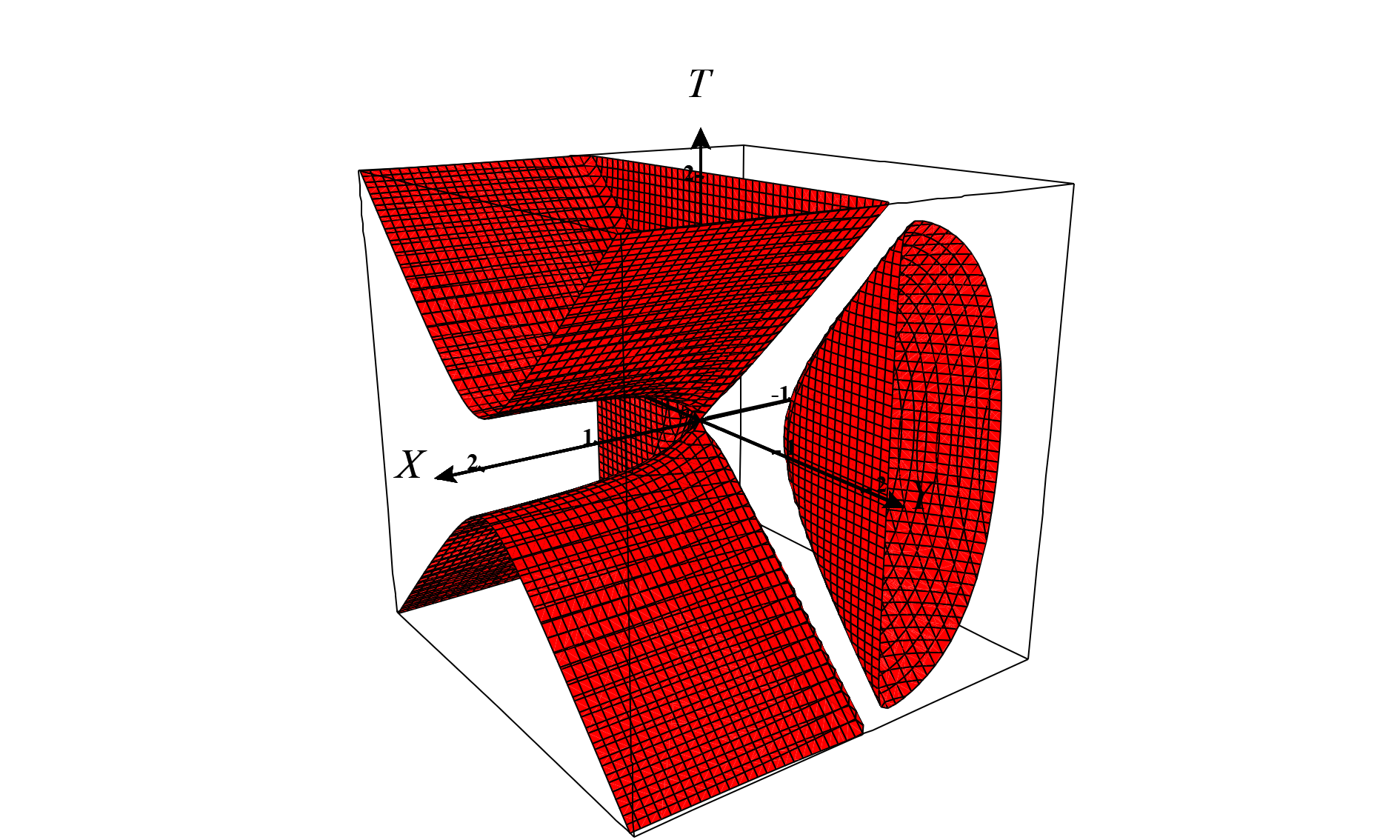}}
				\caption{The red zones $\mathcal R$ for the three main examples, all with respect to the metric $g$ in \eqref{g}: \\ (A) The Lie Poisson manifold $\sl_2^*$, Example \ref{sl2leaves}, \\ (B) Example \ref{ex:quad} with the bivector $\Pi_{qua}$, cf.~\eqref{Piquad}, and \\ (C) the Poisson-Lie group of Example \ref{PLG} for the choice $\eta:=1$.\\
					In all three cases, $\mathcal R$ is invariant under $Y \to -Y$ as well as, separately, under $T \to -T$. In (A) and $(B)$, $\mathcal R$ has two connected components—recall that the singular leaves, all at the conic tips of the red surfaces, do not belong to $\mathcal R$. In (C), $\mathcal R$
					has four  connected components} \label{fig:redzones}
			\end{figure}
			\noindent For a generic choice of $U$ and $V$, it becomes evident that symplectic leaves are not entirely contained within the forbidden red zones, as illustrated by examining the function $f$ in Example \ref{ex:quad}. The corresponding surface $\mathcal R$ is illustrated in Fig.~\!\ref{fig:redzones}B and has little to do with the symplectic leaves of the Poisson bivector 
			\begin{equation} \label{Piquad}
				\Pi_{qua} = (3z^2-1) \, \partial_x \wedge \partial_y - x \, \partial_x \wedge \partial_z + y \, \partial_y \wedge \partial_z \,.  
			\end{equation}
			They are depicted in Figs.~\ref{fig:cquad1} and \ref{fig:cquad-2} for two values of the Casimir function. As mentioned in the general discussion above, $\mathcal R$ does not contain the singular leaves. For the bivector in equation \eqref{Piquad}, these singular leaves are located at $ (x,y,z)=(0,0,\pm \tfrac{1}{\sqrt{3}})$, corresponding to $X= \pm \tfrac{1}{\sqrt{3}}$, $Y=0$, and $T=0$. These two points are clearly visible in Fig.~\!\ref{fig:redzones}B; they coincide with the conic tips of the red surface, to which they do not belong.
			
			\noindent The red zones for Example \ref{PLG}, the Poisson-Lie group, are illustrated in Fig.~\!\ref{fig:redzones}C. These zones are quite intricate and challenging to visualize graphically, making it initially unclear whether some of the symplectic leaves, such as those depicted in Fig.~\!\ref{figeta}, lie within the four connected components of $\mathcal R$. However, this is not the case, as becomes evident when examining the intersection with the symplectic leaves of $(M,\Pi_{grp})$: the intersection of the three leaves shown in Fig.~\ref{figeta} with $\mathcal R$ is depicted in Fig.~\ref{RSgrp}.
			\begin{figure}[H]
				\subfloat[a][]
				\centering
				\scalebox{0.1}[0.1]{\includegraphics{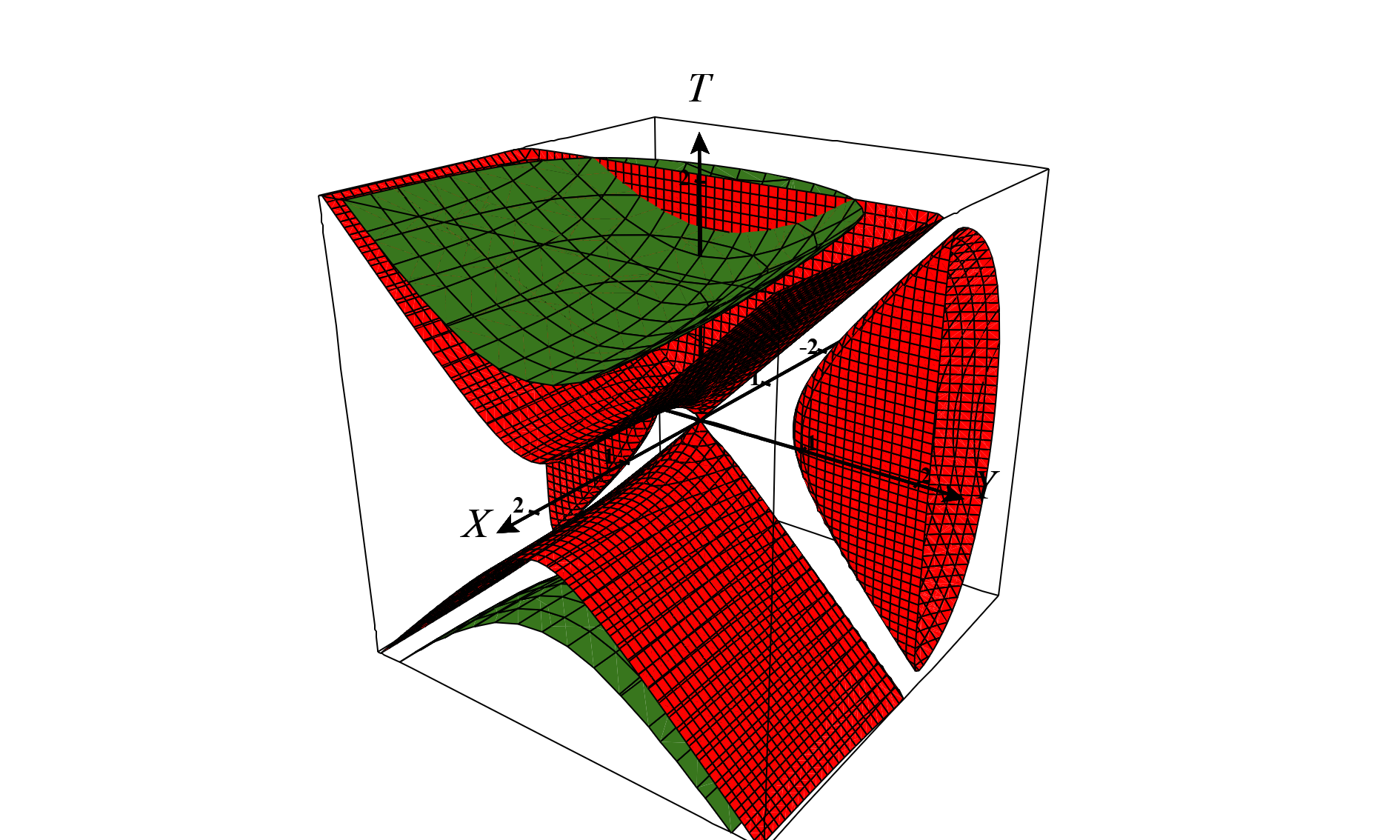}} 
				\hspace{-2.3cm}
				\subfloat[b][]
				{\includegraphics[scale=0.1]{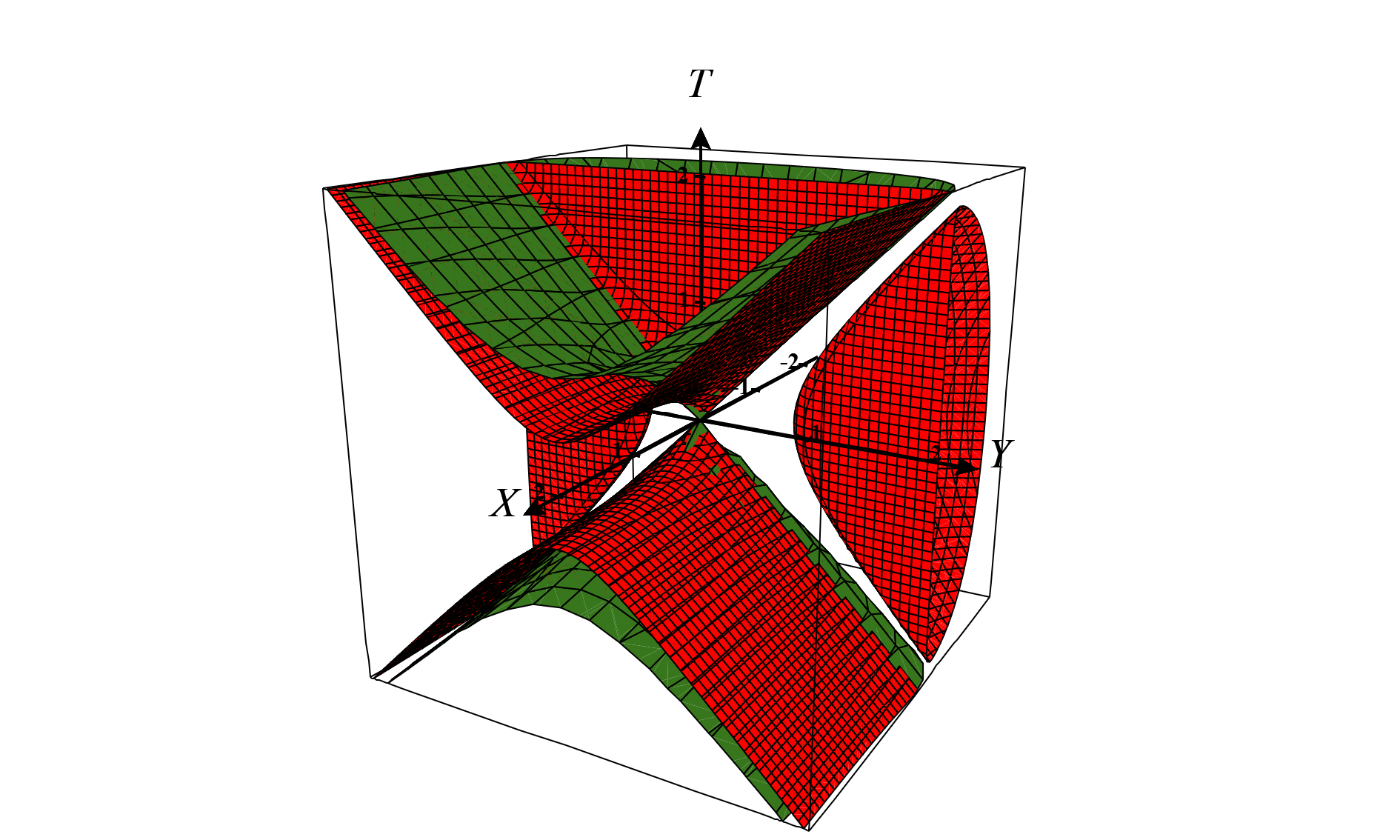}} 
				\hspace{-2.3cm}
				\subfloat[c][]
				{\includegraphics[scale=0.1]{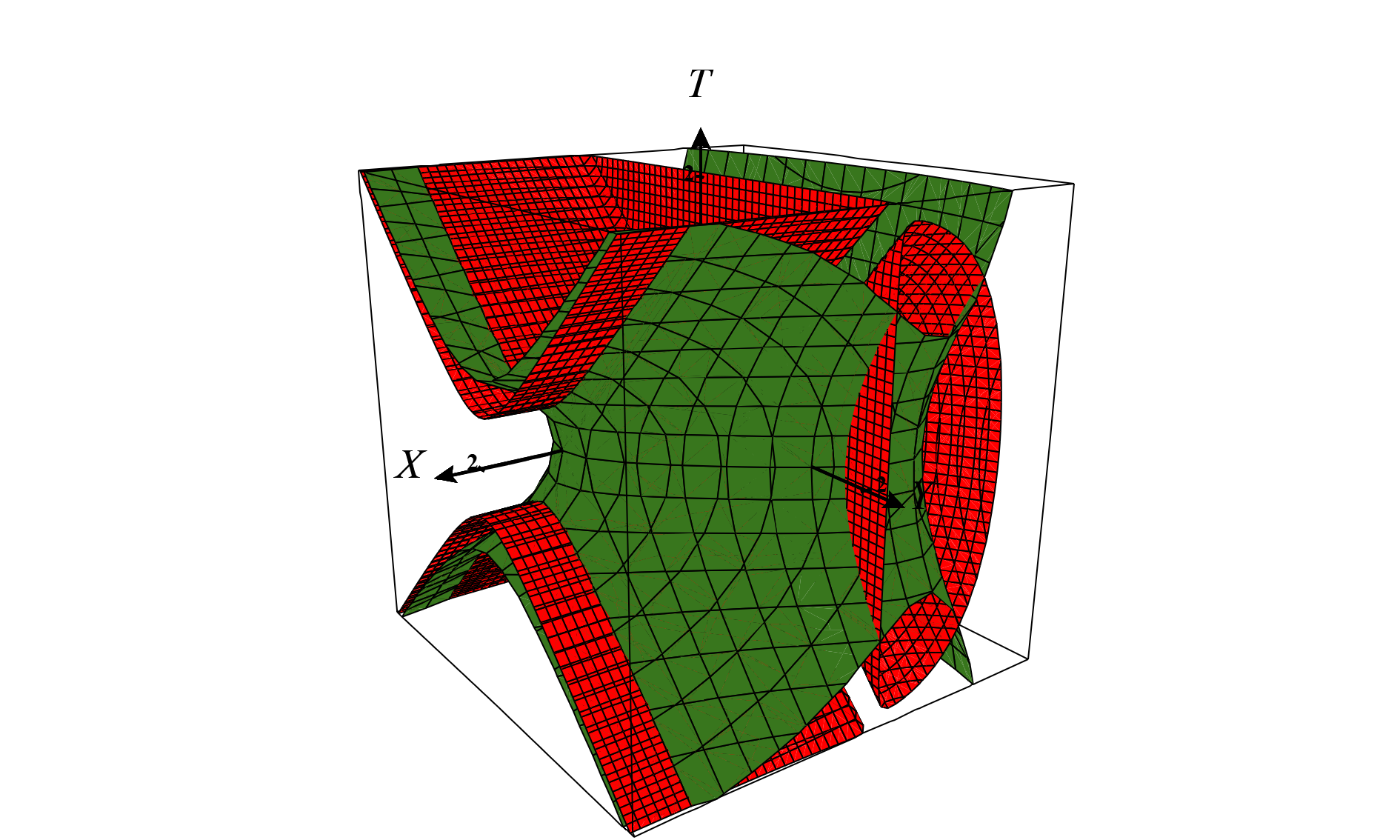}}
				\caption{Intersecting the red zone $\mathcal R$ of the Poisson Lie group, drawn in red for $\eta=1$, with the three symplectic leaves obtained for (A)  $c=-1$, (B) $c=0$, and (C) $c=1$, all drawn in green.  } \label{RSgrp}
			\end{figure}
		\end{exmp}
		
		\section{Geometric interpretation, green zones and red lines}\label{sec:7}
		In this section, we consider the class of Poisson structures given by (\ref{sl2nonlin}) together with the metric (\ref{g}). We discuss the geometric interpretation of symplectic leaves and their relationship to the induced metric on the Poisson manifold. The following points summarize the key ideas:
		
		\begin{itemize}
			\item  The geometry of the pictures allows us to determine whether a leaf \( S \) or a region inside \( S \) carries a non-degenerate induced metric.
			
			\item The coordinates \( X, Y, \) and \( T \) are used for visualization, while \( x, y, \) and \( z \) are retained for calculations due to their convenience.
			
			\item  Using the causal coordinates \( (X,Y,T) \), we identify the manifold \( M \) with a 2+1-dimensional Minkowski space \( \mathbb{R}^3 = \{(X,Y,T)\} \).
		\end{itemize}   
		\begin{exmp}
			The three symplectic leaves in Fig.~\ref{fig1} can be reinterpreted as follows: the leaf with \( T > 0 \) represents the ``future light cone," while the leaf with \( T < 0 \) corresponds to the ``past light cone." They are separated by ``present time," which is represented by the pointlike symplectic leaf at \( (0,0,0) \).
		\end{exmp}     
		
		Let us now recall some fundamental facts and terminology from Minkowski space \( M \): A vector \( v \in \T M \) is termed \emph{time-like} if its length is negative, i.e., \( g(v,v) < 0 \); \emph{space-like} if \( g(v,v) > 0 \); and \emph{null} if \( g(v,v) = 0 \). Vectors are classified as time-like if they are ``essentially vertical" in our drawings, meaning they form an angle of less than 45 degrees with respect to the \( T \)-axis. They are considered null if this angle is exactly 45 degrees, and space-like if they are ``essentially horizontal," indicating an angle greater than 45 degrees. A curve is classified as time-like, space-like, or null if all its tangent vectors possess the corresponding characteristics.
		\begin{rem}
			A submanifold or part of a submanifold \( S \) is of Riemannian or Euclidean nature—meaning the induced metric \( g_{\ind}^{S} \) is positive definite—if all curves lying within \( S \) are space-like.
		\end{rem}   
		\noindent       This is evident, for example, in Fig.~\!\ref{fig1}A, and it also applies to all the yellow regions in the leaves of Fig.~\!\ref{figeta}. 
		\begin{rem}
			A submanifold \( S \), or a portion of it, is pseudo-Riemannian or Lorentzian if, at each point \( s \) within it, there exist curves that are both space-like and time-like.     
		\end{rem}
		\noindent  This is applicable, for instance, to the symplectic leaf depicted in Fig.~\!\ref{fig:cquad1}, as well as to at least part of the light green regions in the leaves of Fig.~\!\ref{figeta}.   

\smallskip
\begin{rem}
The one-dimensional lightlike distribution $\Delta: S \to \operatorname{Rad} \T S$ on a lightlike leaf $S$ is integrable to a line. Specifically, it is defined by the intersection of the red zone (\ref{R}) with the two-dimensional symplectic leaf $S$, and is therefore referred to as the \textbf{red line}\footnote{The ``red lines" might be referred to as ``red domain walls" for higher-dimensional almost regular leaves.}. Geometrically, a red line is a line within a lightlike leaf $S$ where a signature change occurs between Euclidean and Lorentzian green zones.
\end{rem}

		\begin{exmp}    
			Consider the leaf \( S \) depicted in Fig.~\!\ref{fig:cquad-2}. This leaf contains Euclidean regions—particularly at the top and bottom of the hole visible in Fig.~\!\ref{fig:cquad-2}A—as well as Lorentzian regions, such as the left side of Fig.~\!\ref{fig:cquad-2}A. The transition between these regions occurs along specific lines, which are identified as red lines, where the signature change takes place.
		\end{exmp}
		\begin{exmp}
			Let us consider the future and past light cones, as illustrated in Fig.~\ref{fig1}B and Fig.~\ref{fig:redzones}A. At every point \( m \) on these cones, there exists a null curve (consider the straight line connecting the origin to \( m \)), as well as numerous space-like curves. However, there are no time-like curves passing through \( m \). The zero length of the null tangent vector can be explained by the fact that such a vector becomes an eigenvector with an eigenvalue of zero for the induced bilinear form \( g_{\ind}^S \) at \( m \). In this case, we are dealing with entirely bad leaves.
		\end{exmp}
		From the above qualitative discussion, the following observation is obvious:		
		\begin{obs} 
			However, leaves which are $\M$-singular \emph{everywhere} (``bad leaves") are very exceptional. More often they will be either good leaves—leaves which do not have an intersection with the red zone $\mathcal R$—or leaves of mixed nature, where Lorentzian and Euclidean regions are separated by red lines of $\M$-degenerate points.\footnote{More generally, one may want to call leaves $S$ where $\M$-degenerate points form a subset of measure zero ``almost good leaves": there some care will be needed when approaching the forbidden red walls, but within a good region, the conditions for Theorem \ref{gradient th} to hold true are satisfied.}
		\end{obs}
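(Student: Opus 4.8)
The plan is to turn the qualitative assertion ``bad leaves are very exceptional'' into a single functional condition and then exhibit that condition as non-generic. First I would record the characterization: by Proposition~\ref{PropR} together with the definition \eqref{R} of the red zone, a \emph{regular} two‑dimensional leaf $S$ is $\M$‑singular everywhere precisely when it is contained in $\mathcal R$, that is, when $f|_S\equiv 0$ with $f$ as in \eqref{f}, \eqref{xy}. The structural point that makes this tractable is that both the Casimir \eqref{C} and the function $f$ depend on the point only through the pair $(w,z)$, where $w:=xy$; explicitly $\mathcal C = w\,e^{P}+Q$ and $f=U^2+2(1+UV)w+V^2w^2$. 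Hence on a regular leaf $S=\{\mathcal C=c\}$ the relation $w\,e^{P(z)}+Q(z)=c$ determines $w=w_c(z)$ as a function of $z$ alone, so $f|_S$ collapses to a one‑variable function $\phi_c(z):=f(w_c(z),z)$. Thus $S$ is a bad leaf if and only if $\phi_c$ vanishes on the entire $z$‑interval swept out by $S$.

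The analytic core is to show that $\phi_c\equiv 0$ on an interval forces a rigid relation between $\Pi$ and $g$. I would compute $\d f\wedge\d\mathcal C$ directly: writing $\d\mathcal C=e^{P}(\d w+W\,\d z)$ and $\d f=2(1+VW)\,\d w+2W(U'+V'w)\,\d z$ (with ${}'=\d/\d z$ and $W$ as in \eqref{W}), one obtains the clean factorization
\[
\d f\wedge\d\mathcal C=2\,e^{P}\,W\bigl[(1+UV-U')+(V^2-V')\,w\bigr]\,\d w\wedge\d z .
\]
Away from the $z$‑axis (where $\d w\wedge\d z\neq0$) and on a regular leaf (where $\d\mathcal C=e^{P}(\d w+W\,\d z)\neq0$, which by inspection fails only on ${\cal L}_{sing}$), the function $f$ is constant on $S$ exactly when this two‑form vanishes along $S$. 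Since both bracketed factors are polynomials of degree $\le 1$ in $w$, their product can vanish along the leaf‑curve $w=w_c(z)$ only if that curve lies in a zero‑branch of one factor, or else a factor vanishes identically in $w$. The latter is the ODE system $V'=V^2$, $U'=1+UV$, whose solution set is a very restricted family containing, for $V\equiv0$, the linear $\sl_2$ case $U=z$ in which indeed $f=2\mathcal C$; the former requires the accidental identity $Q-\tfrac{U}{V}\,e^{P}\equiv\mathrm{const}$ (or the analogous identity for the second branch), again a closed ODE constraint on $(U,V)$.

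From this I would read off the stated trichotomy. For $(U,V)$ outside the thin, ODE‑defined locus above, $f$ and $\mathcal C$ are functionally independent, so $\d f\wedge\d\mathcal C\not\equiv0$ and $\phi_c$ is non‑constant on every regular leaf; consequently $\{f=0\}\cap S$ is either a curve — a \emph{red line} separating Euclidean and Lorentzian regions — or empty, giving leaves of mixed nature or good leaves, but never bad leaves. A bad leaf can occur only when $\phi_c\equiv0$, i.e.\ under the global functional dependence $f=\mathrm{const}\cdot\mathcal C$ (the exceptional $\sl_2$‑type situation, where the single level $c=0$ is bad and all others are good) or under the accidental coincidence of one level $\{\mathcal C=c\}$ with a zero‑branch — each a positive‑codimension condition on the defining data. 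I expect the main obstacle to be making ``exceptional'' quantitatively precise: one must argue that vanishing of $\d f\wedge\d\mathcal C$ on a \emph{full} two‑dimensional leaf is a genuine ODE constraint on $(U,V)$ rather than a pointwise coincidence, and one must handle with care the degenerate loci — the $z$‑axis and the pointlike singular leaves ${\cal L}_{sing}$, where the $(w,z)$‑description breaks down — so that the classification good/bad/lightlike is exhaustive on the regular part.
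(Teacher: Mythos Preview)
Your proposal is correct, and in fact it goes considerably further than the paper does. In the paper this statement is literally labelled an \emph{Observation} and is introduced with the sentence ``From the above qualitative discussion, the following observation is obvious''; no proof is given. The paper's justification is entirely heuristic, based on the preceding Minkowski-space picture of timelike, spacelike and null tangent vectors on the leaves, together with inspection of the three worked examples (linear, quadratic, Poisson--Lie). Your reduction to the $(w,z)$-plane and the computation of $\d f\wedge\d\mathcal C$ turn the heuristic into a sharp functional-dependence criterion; in particular, the factorization
\[
\d f\wedge\d\mathcal C=2\,e^{P}\,W\bigl[(1+UV-U')+(V^{2}-V')\,w\bigr]\,\d w\wedge\d z
\]
is correct and yields exactly the ODE constraints $V'=V^{2}$, $U'=1+UV$ that single out the $\sl_2$-type exceptional family. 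It is also worth noting that your one-variable function $\phi_c$ is precisely the function the paper introduces \emph{after} the Observation, under the name $F_c$ in \eqref{Fc}, but there it is used only to locate red lines, not to argue genericity. So what you have done is to anticipate that later reduction and extract from it a genuine genericity statement; the paper, by contrast, is content with the qualitative claim. The only point on which your write-up could be tightened is the one you already flag: the passage from ``$W\cdot[\text{bracket}]$ vanishes along the leaf-curve'' to ``the leaf-curve coincides with a zero-branch of one factor'' uses that a product of two functions vanishing on an interval forces, by continuity and real-analyticity of the factors in $z$, one of them to vanish on a subinterval --- worth saying explicitly, but not a gap.
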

		Recall that the admissible, i.e., \( \mathcal{M} \)-regular parts of a symplectic leaf of interest defines the green zone, and each intersection of the leaf with the red zone is a red line. 
		Let us illustrate this with Example \ref{ex:quad}: 
		\begin{exmp}        Fig.~\ref{fig:cquad1} shows a leaf \( S \) that is endowed with a Lorentzian metric \( g^S \). The absence of any \( \mathcal{M} \)-singular points is further confirmed by Fig.~\!\ref{fig:redgreenquad}A, which displays \( S \) (the green surface) alongside \( \mathcal R \) (the red surface); they do not intersect.  This changes for the symplectic leaf \( S' \) of Fig.~\!\ref{fig:cquad-2}: Fig.~\!\ref{fig:redgreenquad}B shows that \( S' \) has non-empty intersections with \(\mathcal R \). By keeping only \( S' \) and the parts of \( \mathcal R \) that intersect with \( S' \), we obtain Fig.~\ref{fig:redlinesquad} 
			\begin{figure}[H]
				\subfloat[B][]
				{\includegraphics[scale=0.1]{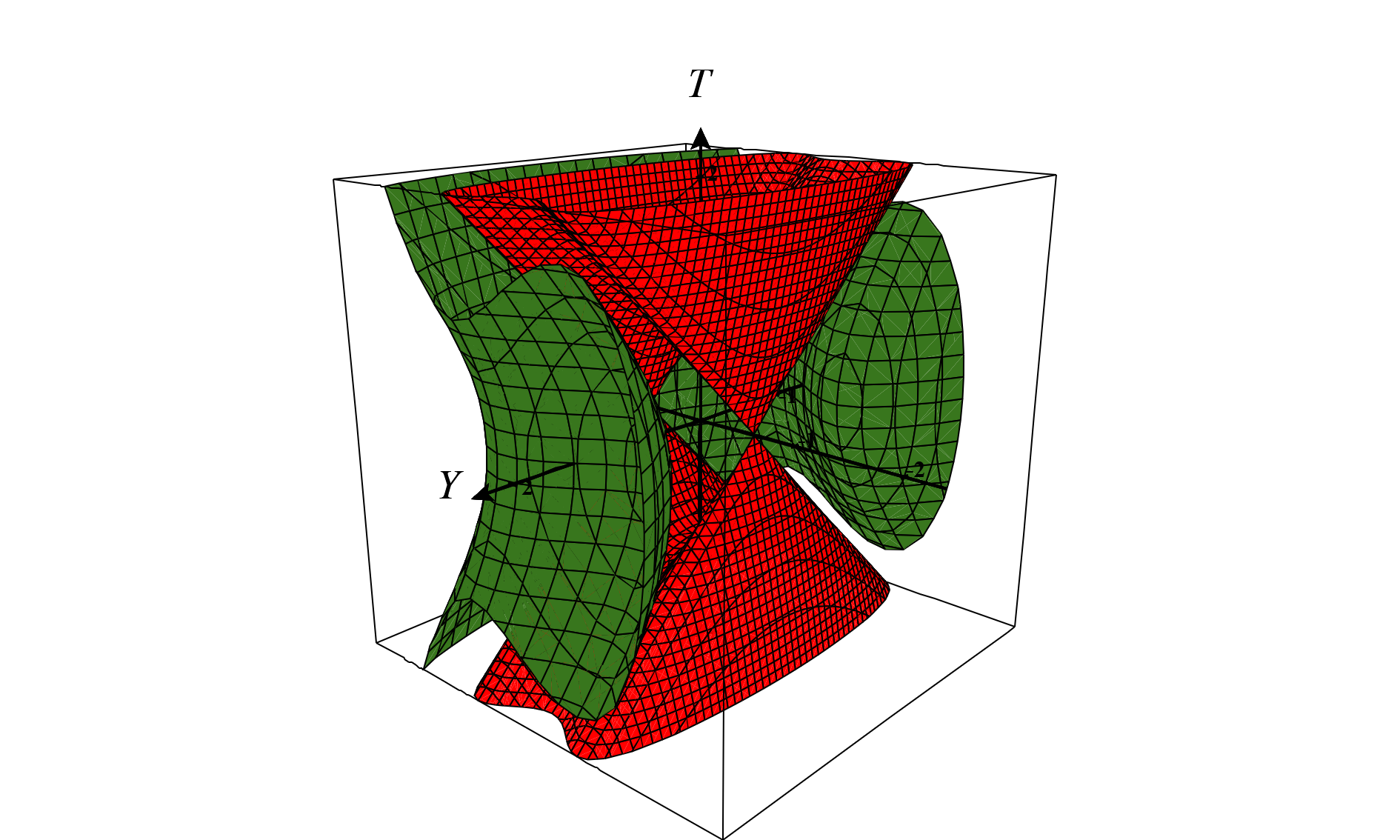}} 
				\hspace{-2.3cm}
				\subfloat[c][]
				{\includegraphics[scale=0.1]{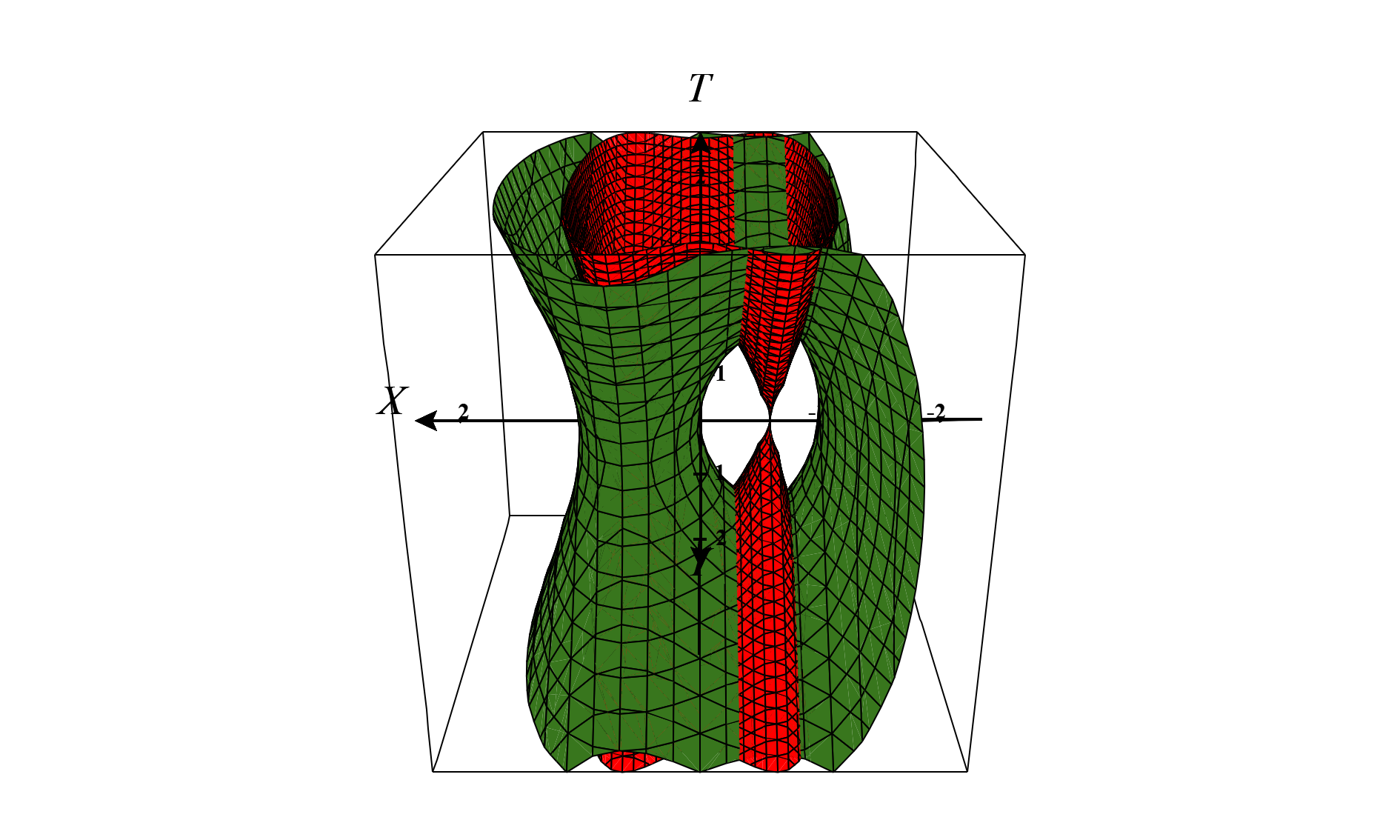}}
				\caption{Intersecting the red zone $\mathcal R$ of $(\R^3,g,\Pi_{qua})$, represented in red, with the two symplectic leaves, depicted in green, obtained for  (A)  $c=1$ and (B) $c=0$. While in (A) there are no intersections, in (B)  intersections do occur (see also Fig.~\ref{fig:redlinesquad} below).} \label{fig:redgreenquad}
			\end{figure}   
		\end{exmp} 
		\begin{figure}[H]
			\centering \includegraphics[width=0.5\linewidth]{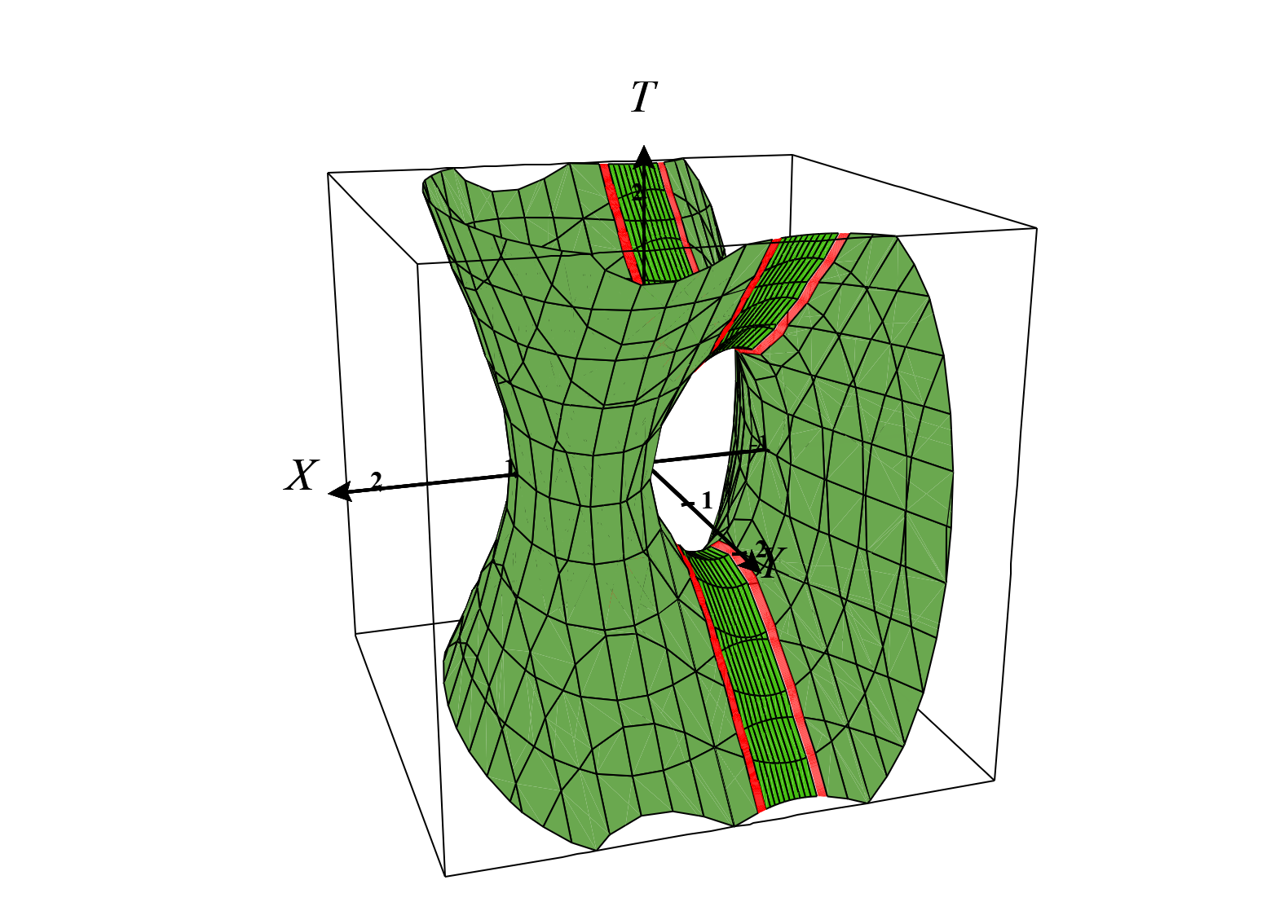}
			\caption{The green zones and red lines for the lightlike symplectic leaf corresponding to $c=0$ in the example with quadratic brackets. The dark green region between the two red lines represents an area of Euclidean signature, while the light green regions to the left and right of the lines indicate areas of Lorentzian signature.}
			\label{fig:redlinesquad}
		\end{figure}
		
		In the class of examples discussed in this paper, there is a nice way of characterizing the red lines. Let us fix a symplectic leaf $S_c$ corresponding to the value $c$ of the Casimir function \eqref{C}.
		\begin{cor}
			On a symplectic leaf \( S_c \), the zeros $z_{red}$ of the function 
			\begin{equation} \label{Fc}
				F_c =  U^2 + 2(1+UV)e^{-P} (c-Q)  
				+ V^2 \, e^{-2P} (c-Q)^2\,,
			\end{equation}
			precisely determine the red lines on the leaf $S_c$. 
		\end{cor}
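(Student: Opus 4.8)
The plan is to reduce the defining condition of the red zone to a single-variable equation along the leaf, exploiting the fact that both the function $f$ and the Casimir $\mathcal C$ depend on the coordinates $(x,y)$ only through the product $xy$. First I would record the explicit form \eqref{xy} of $f$, namely $f = U^2 + 2(1+UV)\,xy + V^2(xy)^2$, together with the Casimir \eqref{C}, $\mathcal C = xy\, e^{P(z)} + Q(z)$. Since $U$, $V$, $P$, $Q$ are functions of $z$ alone, both $f$ and $\mathcal C$ are genuinely functions of the two quantities $xy$ and $z$, and this is the structural observation that makes the whole computation collapse.

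Next, on a fixed leaf $S_c$ the Casimir constraint $\mathcal C = c$ can be solved algebraically for the product: since $e^{P(z)}$ never vanishes, one gets $xy = e^{-P(z)}\bigl(c - Q(z)\bigr)$ with no division issue. Substituting this expression for $xy$ into \eqref{xy} turns $f|_{S_c}$ into a function of $z$ alone, which is exactly $F_c$ as displayed in \eqref{Fc}. Thus the restriction of the red-zone condition $f=0$ to the leaf is equivalent to $F_c(z)=0$.

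Finally I would identify the red lines themselves. By \eqref{R}, a point of $S_c$ belongs to $\mathcal R$ precisely when $f=0$ while not lying in ${\cal L}_{sing}$; after the substitution this is equivalent to $F_c(z)=0$. Because $F_c$ depends on $z$ only, each zero $z_{red}$ cuts out the level set $\{z = z_{red}\}\cap S_c$, which is the locus where the product $xy$ takes the fixed value $e^{-P(z_{red})}(c-Q(z_{red}))$—a genuine one-dimensional curve on the two-dimensional leaf. These curves are the red lines. The exclusion of ${\cal L}_{sing}$ is automatic here, since those singular points $(0,0,z)$ form separate zero-dimensional leaves and do not lie on the regular leaf $S_c$. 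The argument is otherwise a direct substitution; the only point requiring a brief verification is that each zero of $F_c$ yields a genuine curve rather than a spurious isolated intersection, which follows from $z$ being non-constant on $S_c$.
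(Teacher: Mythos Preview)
Your proof is correct and follows essentially the same route as the paper: solve the Casimir constraint on $S_c$ for $xy = e^{-P(z)}(c-Q(z))$, substitute into the expression \eqref{xy} for $f$, and identify the resulting one-variable function as $F_c$, so that the red-zone condition $f=0$ on the leaf becomes $F_c(z)=0$ and the red lines are the slices $z=z_{red}$. Your treatment is in fact slightly more careful than the paper's, since you explicitly note the dependence of $f$ and $\mathcal C$ on $(x,y)$ only through $xy$ and you address the exclusion of ${\cal L}_{sing}$.
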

		\begin{proof}
			By (\ref{C}), on the symplectic leaf \( S_c \),  we can express \( xy \) as a function of $z$: $xy=\exp[-P(z)] \cdot \left[c-Q(z)\right]$. Plugging this
			into 
			\eqref{xy}, we obtain a function $F_c = f\vert_{S_c} \colon  \R \to \R$, which takes the form (\ref{Fc}). It is precisely the zeros \( z_{\text{red}} \) of the function \( F_c\) that determine the red lines on the leaf \( S_c \). These red lines consist of those points on \( S_c \) where the \( X \)-coordinate takes one of the values corresponding to the zeros of the function \( F_c \). 
			\begin{equation}\label{zred}
				X \equiv z = z_{red}. 
			\end{equation} 
		\end{proof}
		It is remarkable that, for \emph{every} choice of $U$ and $V$, the intersection of the red zone with \emph{any} symplectic leaf has a constant value for $X$. Note that the intersection of the planes \eqref{zred} with a symplectic leaf \( S \) yields the red lines only if \( S = S_c \). 
		\begin{com}   
			Just as the zeros of the function \( h_c \) determine the topological nature of the corresponding symplectic leaf \( S_c \) of \( (M, \Pi) \), the zeros of the function \( F_c \) determine the red lines and green zones of this leaf in \( (M, \Pi, g) \). Moreover, if \( F_c(z) > 0 \), then for this value of \( z \) or \( X \), the induced metric has Lorentzian signature; conversely, if \( F_c(z) < 0 \), it is Euclidean.
		\end{com}        
		\begin{exmp}
			Let us return to the example of quadratic brackets for an illustration again: The function \eqref{Fc} becomes $F_c=9z^4-2z^3-6z^2+2z+1+2c$. Its graph is drawn in Fig.~\!\ref{fig:F_c.quad} for the two values of \( c \) corresponding to the leaves depicted in Fig.~\!\ref{fig:cquad1} (\( c=1 \)) and Fig.~\!\ref{fig:cquad-2} (\( c=0 \)). From this diagram, we observe that for \( c=1 \), there are no zeros of this function, and it is strictly positive. This implies that the corresponding leaf \( S_0 \), as shown in Fig.~\!\ref{fig:cquad1}, is a good leaf and that the induced metric is of Lorentzian signature everywhere. While we have established this previously through other means, it is evident that this conclusion can be easily drawn from a simple inspection of the graph of the one-argument function \( F_1 \).
			\begin{figure}[H]
				\centering
				\includegraphics[width=0.5\linewidth]{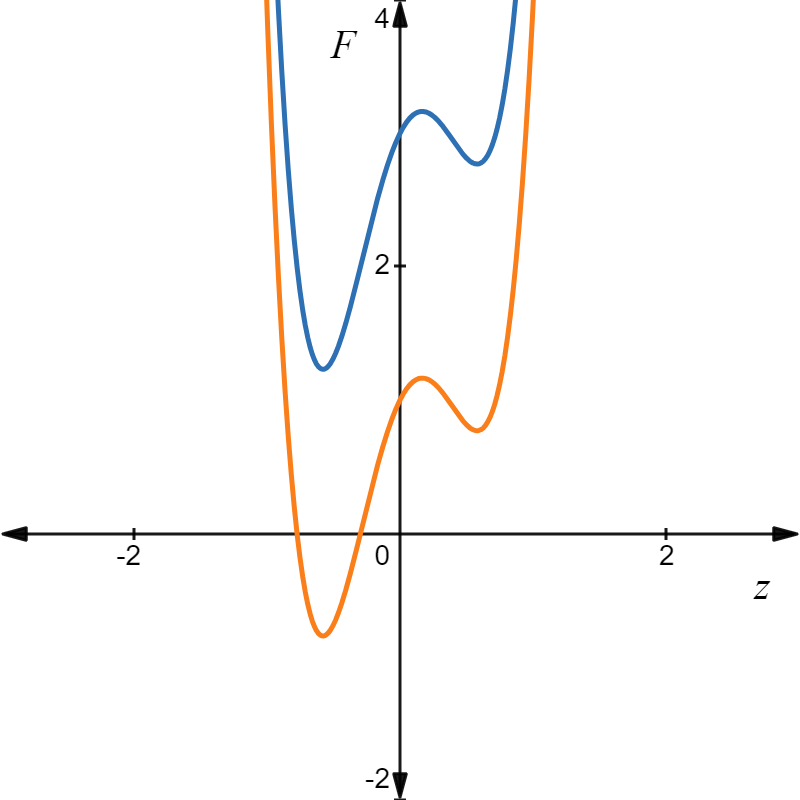}
				\caption{The function $F_c$ for the quadratic Poisson structure, orange color for $F_0$ and blue color for $F_1$.}
				\label{fig:F_c.quad}
			\end{figure}
			\noindent  	Likewise, we see that \( F_0 \) has two zeros located at the values \( z_{\text{red},1} \approx -0.77 \) and \( z_{\text{red},2} \approx -0.30 \). These zeros fix the location of the two red lines on the leaf \( S_0 \), as depicted in Fig.~\!\ref{fig:redlinesquad}. Between these two values of \( z \equiv X \), the function \( F_0 \) is negative, indicating that the region between the red lines, shown as dark green in Fig.~\!\ref{fig:redlinesquad}, is Riemannian. For values of \( X \) smaller than \( z_{\text{red},1} \) or greater than \( z_{\text{red},2} \), \( F_0 \) is positive; therefore, those green zones, depicted in light green in Fig.~\!\ref{fig:redlinesquad}, carry an induced Lorentzian signature metric.
		\end{exmp}
		
		\section{Gradient-Like behavior of GDB in green zones}\label{sec:8}
		We consider pseudo-Riemannian Poisson manifolds $(M,\Pi,g)$ defined by $M=\mathbb{R}^3$, the pseudo-Riemannian metric \eqref{g}, and the bivector \eqref{Pi}. In this section, we characterize the GDB vector fields for this class of Poisson structures, demonstrating that they are gradient vector fields in the green zones with respect to the corresponding DB metric. Specifically, we show that within these regions, the GDB vector fields exhibit properties consistent with those of gradient fields, enabling us to leverage their characteristics for further analysis. We proceed as follows:
	
	\medskip	
		\noindent {\bf To determine the DB metric on symplectic leaves}:  First, we need to determine the geometric data on a regular symplectic leaf 
		$S$, specifically the induced metric $g_{\ind}^S$ and the symplectic form $\omega^S$. We start by imposing the condition $\mathcal C(x,y,z)=c$ for (\ref{C}). Taking the differential of this equation, one has 
		\begin{equation}
			x \, \d y + y \, \d x + W \d z \approx 0\,.
		\end{equation}
		Here and in what follows, we use $\approx$ to denote equations valid exclusively on the symplectic leaf $S$. When using the coordinates $(x,z)$ on $S$, we have:
		\begin{equation} \label{diff}
			\d y \approx - \frac{ y \, \d x + W \d z }{x}   \,.
		\end{equation}
		This expression holds in regions of the leaf where $x \neq 0$. Similarly, when using $(y,z)$ coordinates on the leaf, we derive expressions valid for $y \neq 0$. The coordinates $(x,y)$ serve as suitable coordinates on $S$ when $W(x,y,z) \neq 0$. As the simultaneous vanishing of $x$, $y$, and $W$ corresponds precisely to singular point-like leaves, we can cover the entirety of $S$ using these three coordinate charts. For the remainder of our analysis, we will focus on the coordinate chart $(x,z)$, noting that analogous expressions can be obtained for the other two charts.
		
		\medskip \noindent
		Plugging \eqref{diff} into the embedding metric \eqref{g}, we  see that the induced metric becomes
		\begin{equation} \label{gindS}
			g_{\ind}^S \approx - \frac{2y}{x }\d x^2 - \frac{2W}{x} \d x \d z + \d z^2\,.
		\end{equation}
		in the chart with $x \neq 0$. Here $y$ is understood as the following function of $x$, $z$, and the parameter $c$:
		\begin{equation} \label{y}
			y \approx \frac{e^{-P(z)} \, \left( c -Q(z)\right)}{x} =: y(x,z)\, ,
		\end{equation}
		where $P$ and $Q$ are the functions defined in Lemma \ref{lemC}. The coordinate $y$ also enters $W$, so in \eqref{gindS} 
		\begin{equation}  \label{Wy}
			W \approx U(z) + 2x \, y(x,z) \, V(z)\,.   
		\end{equation}
		
		\medskip \noindent
		The apparent singularity of $g_{\ind}^S$ arises from the fact that the coordinate system $(x,z)$ on $S$ breaks down when $x=0$. We note in parenthesis that this corresponds to the plane $Y=T$ in the coordinate system $\eqref{XYT}$. Additionally, $g_{\ind}^S$ exhibits an inherent issue when a point on the leaf $S$ falls within the red zone $\mathcal R$, as referenced in $\eqref{R}$. Calculating the determinant of the induced metric, we obtain:
		
		\begin{equation}
			\det g_{\ind}^S \approx - \frac{f^S(x,z)}{x^2} \, , 
		\end{equation}
		where  $f^S(x,z):=f(x,y(x,z),z)$ with the function $f$ as defined in \eqref{f}. So, as expected, $\det g_{\ind}^S$ vanishes on the red lines. This corresponds to 
		\begin{equation} \label{ker}
			\ker (g_{\ind}^S)^{\sharp} \vert_m \approx \mathrm{Vect} (x\partial_x\vert_m + W\partial_z\vert_m)\,, \qquad\qquad \forall m \in \mathcal R \cap S\, . 
		\end{equation}

		\medskip \noindent
		On the other hand, from the second Poisson bracket in \eqref{sl2nonlin}, we deduce that 
		\begin{equation}
			\omega^S \approx \frac{\d x \wedge \d z}{x} \, . 
		\end{equation}
		Indeed, $\omega^S$ is well-defined on the entire leaf $S$. The apparent singularity at $x=0$ is merely a coordinate singularity. We now proceed to determine the DB metric \eqref{DB}. A direct calculation yields
		\begin{equation} \label{combine1}
			\tau_{DB} \approx \frac{2y\d x \d x+ 2W \, \d x \d z-x\d z\d z}{x(2xy+W^2)} \, ,
		\end{equation}
		with $y$ and $W$ as in \eqref{y} and \eqref{Wy}.
		We remark that \begin{equation}
			\tau_{DB} \approx -\frac{1}{f^S} \; g_{\ind}^S \, .
			\label{tauDBsing}
		\end{equation}
		Evidently, this tensor is not well-defined on the red lines, i.e.\ for points $m \in \mathcal R \cap S$—and it also does not have a continuous continuation into such points. In contrast, $g_{\ind}^S$ is a well-defined tensor on all of $S$, though it fails to define a pseudo-metric on $\mathcal R \cap S$ due to the presence of a kernel (see \eqref{ker}). However, in the green zones, defined as $S \backslash (\mathcal R \cap S)$, both $g_{\ind}^S$ and $\tau_{DB}$ successfully define a (pseudo) metric.

		\medskip \noindent
		{\bf To determine GDB vector fields:}	
		The GDB vector field $\partial_ G$ is defined on the entire manifold $M=\mathbb{R}^3$. To determine it for a function $G \in C^\infty(\mathbb{R}^3)$, we apply the negative of the matrix \eqref{Mmatrix} to the gradient vector $[\d G] = (G,_x,G,_y, G,_z)$, where the comma notation denotes partial derivatives. This calculation yields:
		\begin{align}
			\partial_ G 
			= \: & \left(- x^2G,_x +  \left[xy+W^2\right]G,_y - xWG,_z\right) \frac{\pa}{\pa x}\, + \nonumber\\ \: & \left( -y^2G,_y+\left[xy+W^2\right]G,_x - yWG,_z\right) \frac{\pa}{\pa y}
			\, + \label{pG2}\\ \: & 
			\left(-xWG,_x  - yWG,_y +2xyG,_z\right) \frac{\pa}{\pa z} \, . \nonumber 
		\end{align}
		To illustrate how the GDB vector field arises from more geometric quantities, we present the three elementary Hamiltonian vector fields corresponding to the canonical coordinates $(x,y,z) \in \mathbb{R}^3$:
		\begin{equation}\label{Hamxyz1}
			X_x =  W \frac{\pa}{\pa y} -x \frac{\pa}{\pa z}\, , \quad
			X_y = -W \frac{\pa}{\pa x} + y \frac{\pa}{\pa z} \, , \quad 
			X_z = x \frac{\pa}{\pa x} -y \frac{\pa}{\pa y}  \, . 
		\end{equation} 
		Now, ${g}^\flat$ applied to any vector field $a\frac{\pa}{\pa x} + b\frac{\pa}{\pa y} + c\frac{\pa}{\pa z}$ yields the 1-form $b \d x + 
		a\d y + c\d z$, and since $\Pi^{\sharp}( \d x )= X_x$ etc, we easily find the image of \eqref{Hamxyz1} under $\Pi^{\sharp} \circ {g}^\flat$ to be
		\begin{align}\label{Hamxyz2}
			\Pi^{\sharp} \left( {g}^\flat(X_x) \right)  &=   W X_x -x X_z\, ,\nonumber \\
			\Pi^{\sharp} \left( {g}^\flat (X_y) \right) &= -W X_y + y X_z\, , \\ 
			\Pi^{\sharp} \left( {g}^\flat (X_z) \right)  &= \: \;\: \, x X_y -y X_x  \, .\nonumber
		\end{align}   
		Plugging \eqref{Hamxyz1} into \eqref{Hamxyz2}, we obtain the three fundamental GDB vector fields $\partial_x$, $\partial_ y$, and $\partial_ z$, respectively. 
		
		\medskip \noindent  The vector field $\partial_G$ is defined in $\mathbb{R}^3$ but is tangent to the symplectic leaves, including the specific leaf $S$. We can therefore consider the restriction $\partial_G|S$ as a vector field on $S$. To understand its representation in the coordinate system $(x,z)$ on $S$, we first express $\partial_ G \in \Gamma(\mathbb{R}^3)$ in coordinates adapted to the symplectic leaves.
		
		\noindent  We introduce a new coordinate system:
		$\widetilde{x} := x$, $\widetilde{y} := \mathcal C(x,y,z)$, and $\widetilde{z} := z$. This coordinate system is well-defined on $\mathbb{R}^3 \setminus ({0} \times \mathbb{R}^2)$. Under this change of coordinates, the partial derivative $\frac{\pa}{\pa x}$ transforms as follows:
		$$ \frac{\pa}{\pa x} = \frac{\pa}{\pa \widetilde x} + C,_y \frac{\pa}{\pa \widetilde y} \, .$$
		This transformation occurs despite $x = \widetilde{x}$, due to the dependence of $\widetilde{y}$ on $x$ through the function $\mathcal C(x,y,z)$. However, since the vector field $\partial_G$ is tangent to the symplectic leaf $S$ when restricted to it, all contributions proportional to $\frac{\pa}{\pa \widetilde{y}}$ vanish, while those proportional to $\frac{\pa}{\pa x}$ and $\frac{\pa}{\pa z}$ remain unchanged. Consequently, in this new coordinate system, the vector field \eqref{pG2} retains its form after replacing the untilded coordinates with tilded ones and lacing the untilded coordinates with tilded ones and, at the same time, simply dropping the second line on the right-hand side.
		
		\medskip \noindent Therefore, after clarifying the calculations and once again using the coordinates $x=\widetilde x$  and $z=\widetilde z$ on the leaf $S$, we obtain
		\begin{equation} \label{combine2}
			\partial_G|_S  \approx  \left( -x^2G,_x +  \left[xy+W^2\right]G,_y - xWG,_z\right) \frac{\pa}{\pa x}+ 
			\left(-xWG,_x  -yWG,_y +2xyG,_z\right) \frac{\pa}{\pa z} \,.
		\end{equation}
		Here, the derivatives of $G$ are computed before restricting to $S$, and, as previously mentioned, the functions $y$ and $W$ are defined by equations \eqref{y} and \eqref{Wy}, respectively.
		
		\medskip \noindent 
		{\bf GDB vector fields as gradient vector fields:}		We are now ready to combine the two main components, \eqref{combine1} and \eqref{combine2}. After a somewhat tedious calculation and the cancellation of several terms, we obtain:
		\begin{equation}
			\tau_{DB} \left({\partial_ G}|_S ,\cdot\right) \approx - G,_x \d x + G,_y \left(\frac{y}{x} \d x + \frac{W}{x} \d z\right)  -G,_z \d z \, .
		\end{equation}
		Upon usage of \eqref{diff}, the term in brackets following $G,_y$ is recognized to be precisely $-\d y$. This implies 
		\begin{equation} \label{equation}
			\tau_{DB} \left({\partial_ G}|_S ,\cdot\right) \approx -  \d \left( G\vert_S\right) \, , 
		\end{equation}
		which is equivalent to Equation \eqref{thmeq1}. 
		
		\medskip\noindent{\bf Geometric interpretation:}	In the above manipulations, it is understood that we are operating within the green zone of \( S \), as otherwise, \( \tau_{DB} \) would not be defined (cf. \eqref{tauDBsing} and the discussion that follows). Moreover, we observe that, according to the right-hand side of \eqref{equation}, the left-hand side has a continuous continuation to \( \mathcal{M} \)-singular points. This can be explained as follows: Recall that \( \partial_ G \) is a well-defined vector field in all \( \mathbb{R}^3 \), and thus the right-hand side of equation \eqref{combine2} is defined in \( \mathcal R \cap S \) provided that \( x \neq 0 \) (which ensures the applicability of our coordinate patch). Using this on \( \mathcal R \), we can replace \( -2xy \) with \( W^2 \); see equations \eqref{f} and \eqref{R}. Consequently, we find that in our chart \( (x,z) \) on \( S \):
		\begin{equation}
			\partial_ G|_{\mathcal R\cap S} \approx (x G,_x + y G,_y + W G,_z) (x \frac{\pa}{\pa x} + W \frac{\pa}{\pa z}).
		\end{equation}
		
		\noindent In $\mathcal{R}$, the image of $\mathcal{M}^\sharp$ becomes one-dimensional, and when restricted to the leaf $S$, it coincides with the kernel of $\tau_{DB}^S$; see equation \eqref{ker}. Consequently, $\tau_{DB}$ blows up precisely in the red zone $\mathcal{R}$; see Equation \eqref{tauDBsing}. Moreover, the vector field $\partial_G|_{\mathcal{R} \cap S}$ lies within the null space $\operatorname{Rad} \T S$, indicating that at each point, the flow of the GDB vector field is tangent to the null direction. In other words, on the red lines, the GDB vector field is a globally null vector field.

		\bigskip
		\noindent {\bf Acknowledgements.} 
		Z. Ravanpak acknowledges the ``Cercetare postdoctoral\u a avansat\u a” funded by the West University of Timi\c soara, Romania, the financial support from the Spanish Ministry of Science and Innovation under grants PID2022-137909NB-C22, and Erwin Schrödinger International Institute for Mathematics and Physics (ESI), University of Vienna,  where a part of this work has been done during her research stay as a part of program ``Geometry of Gauge Theories-SRF0324-2024". A part of this work was also conducted during the authors’ research stay at ESI as part of the program ``Infinite-dimensional Geometry: Theory and Applications-LTV-2025.”  The authors are grateful to Thomas Strobl for valuable remarks and suggestions to improve the paper.

	\end{document}